\begin{document}

\topmargin-0.1in
\textheight8.5in
\textwidth5.5in
\footskip35pt
\oddsidemargin.5in
\evensidemargin.5in

\newcommand{\V}{{\cal V}}      % cal in math mode
\renewcommand{\O}{{\cal O}}
\newcommand{\LL}{\cal L}
\newcommand{\Ext}{\hbox{{\rm Ext}}}
\newcommand{\Tor}{\hbox{Tor}}
\newcommand{\Hom}{\hbox{Hom}}
\newcommand{\Proj}{\hbox{Proj}}
\newcommand{\GrMod}{\hbox{GrMod}}
\newcommand{\grmod}{\hbox{gr-mod}}
\newcommand{\tors}{\hbox{tors}}
\newcommand{\rank}{\hbox{rank}}
\newcommand{\End}{\hbox{End}}
\newcommand{\GKdim}{\hbox{GKdim}}
\newcommand{\im}{\hbox{im}}
\renewcommand{\ker}{\hbox{ker}}
\newcommand{\isom}{\cong}
\newcommand{\lk}{\text{link}}
\newcommand{\soc}{\text{soc}}

\newcommand{\lonto}{{\protect \longrightarrow\!\!\!\!\!\!\!\!\longrightarrow}}

\newcommand{\m}{{\mu}}
\newcommand{\gl}{{\frak g}{\frak l}}
\newcommand{\ssl}{{\frak s}{\frak l}}
\newcommand{\I}{\mathfrak{I}}

\newcommand{\ds}{\displaystyle}
\newcommand{\s}{\sigma}
\renewcommand{\l}{\lambda}
\renewcommand{\a}{\alpha}
\renewcommand{\b}{\beta}
\newcommand{\G}{\Gamma}
\newcommand{\g}{\gamma}
\newcommand{\z}{\zeta}
\newcommand{\e}{\epsilon}
\newcommand{\D}{\Delta}
\renewcommand{\d}{\delta}
\newcommand{\p}{\rho}
\renewcommand{\t}{\tau}

\newcommand{\C}{{\mathbb C}}
\newcommand{\N}{{\mathbb N}}
\newcommand{\Z}{{\mathbb Z}}
\newcommand{\ZZ}{{\mathbb Z}}
\newcommand{\K}{{\mathcal K}}
\newcommand{\F}{{\mathcal F}}
\newcommand{\cS}{\mathcal S}

\newcommand{\rowxy}{(x\ y)}
\newcommand{\colxy}{ \left({\begin{array}{c} x \\ y \end{array}}\right)}
\newcommand{\scolxy}{\left({\begin{smallmatrix} x \\ y
\end{smallmatrix}}\right)}

\renewcommand{\P}{{\Bbb P}}

\newcommand{\la}{\langle}
\newcommand{\ra}{\rangle}
\newcommand{\tensor}{\otimes}

\newtheorem{thm}{Theorem}[section]
\newtheorem{lemma}[thm]{Lemma}
\newtheorem{cor}[thm]{Corollary}
\newtheorem{prop}[thm]{Proposition}

\theoremstyle{definition}
\newtheorem{defn}[thm]{Definition}
\newtheorem{notn}[thm]{Notation}
\newtheorem{ex}[thm]{Example}
\newtheorem{rmk}[thm]{Remark}
\newtheorem{rmks}[thm]{Remarks}
\newtheorem{note}[thm]{Note}
\newtheorem{example}[thm]{Example}
\newtheorem{problem}[thm]{Problem}
\newtheorem{ques}[thm]{Question}
\newtheorem{conj}[thm]{Conjecture}
\newtheorem{thingy}[thm]{}

\newcommand{\onto}{{\protect \rightarrow\!\!\!\!\!\rightarrow}}
\newcommand{\donto}{\put(0,-2){$|$}\put(-1.3,-12){$\downarrow$}{\put(-1.3,-14.5) 

{$\downarrow$}}}

\newcounter{letter}
\renewcommand{\theletter}{\rom{(}\alph{letter}\rom{)}}

\newenvironment{lcase}{\begin{list}{~~~~\theletter} {\usecounter{letter}
\setlength{\labelwidth4ex}{\leftmargin6ex}}}{\end{list}}

\newcounter{rnum}
\renewcommand{\thernum}{\rom{(}\roman{rnum}\rom{)}}

\newenvironment{lnum}{\begin{list}{~~~~\thernum}{\usecounter{rnum}
\setlength{\labelwidth4ex}{\leftmargin6ex}}}{\end{list}}

%\begin{document}

%\large{Version as of September 19, 2011} 

\title{$\K_2$ Factors of Koszul Algebras and Applications to  Face Rings}

\keywords{$\K_2$ algebra, Koszul algebra, Yoneda algebra, Stanely-Reisner ring, Face ring}

\author[  Conner, Shelton ]{ }
\maketitle

\begin{center}

\vskip-.2in Andrew Conner \\
\bigskip

Department of Mathematics\\
Wake Forest University\\
Winston-Salem, NC 27109\\
\bigskip

 Brad Shelton \\
\bigskip

Department of Mathematics\\ University of Oregon\\
Eugene, Oregon 97401
\\ \ \\

\end{center}

\setcounter{page}{1}

\thispagestyle{empty}

\vspace{0.2in}

\begin{abstract}
\baselineskip15pt

Generalizing the notion of a Koszul algebra, a graded $k$-algebra $A$ is $\K_2$ if its Yoneda algebra $\Ext_A(k,k)$ is generated as an algebra in cohomology degrees 1 and 2.  We prove a strong theorem about $\K_2$ factor algebras of Koszul algebras and use that theorem to show  the Stanley-Reisner face ring of a simplicial complex $\D$ is $\K_2$ whenever the Alexander dual simplicial complex $\D^*$ is (sequentially) Cohen-Macaulay.

\end{abstract}

\bigskip

\baselineskip18pt

%%%%%%%%%%%%%%%%%%%%%%%%%%%%%%%%%%%%%%%%%%%%%%%%%%%%%%%%%%%%%%%%%%%%%%

\section{Introduction}

Let $k$ be a field.  Throughout this paper we use the phrase \emph{graded $k$-algebra} to mean a connected, $\N$-graded, locally finite-dimensional $k$-algebra which is finitely generated in degree 1.  Let $A$ be a graded $k$-algebra. We use the term \emph{(left or right) ideal} to mean a graded (left or right) ideal of $A$ generated by homogeneous elements of degree at least 2. The augmentation ideal is $A_+=\bigoplus_{i\ge 1} A_i$. We abuse notation and use $k$ (or $_Ak$ or $k_A$) to denote the trivial graded $A$-module $A/A_+$. The bigraded Yoneda algeba of $A$ is  the $k$-algebra $E(A) = \bigoplus_{i,j\ge 0} E^{i,j}(A) = \bigoplus_{i,j\ge 0} \Ext^{i,j}_A(k,k)$. (Here $i$ denotes the cohomology degree and $j$ denotes the internal degree inherited from the grading on $A$.) Let $E^p(A)=\bigoplus_{q} E^{p,q}(A)$.

A graded $k$-algebra $A$ is called Koszul (\cite{Priddy}) if $E(A) $ is generated by $E^1(A) = E^{1,1}(A)$ as a $k$-algebra. Equivalently, $A$ is Koszul if it satisfies the strong purity condition $E^{i,j}(A) = 0$ for all $i\ne j$. (Purity is the homological condition that for each $i$, $E^i(A)$ is supported in at most one internal degree.) Koszul algebras play an important role in many branches of mathematics, but Koszul algebras must be quadratic algebras.  

In \cite{CS}, Cassidy and Shelton introduced a generalization of Koszul that includes  non-quadratic algebras and algebras with relations in more than one degree.  Following \cite{CS}, we say graded $k$-algebra $A$ is $\K_2$ if $E(A)$ is generated as a $k$-algebra by $E^1(A)$ and $E^2(A)$.  As $E^2(A)$ encodes the defining relations of $A$, this is essentially the smallest space that could generate $E(A)$ when $A$ has non-quadratic relations. (Note that $E^{2,2}(A)$ is always generated by $E^1((A)$.)  Clearly Koszul algebras are $\K_2$, and in \cite{G} it was shown that Berger's $D$-Koszul algebras are $\K_2$ as well. In \cite{CS} it was shown that class of $\K_2$ algebras is closed under tensor products, regular central extensions and even graded Ore extensions. Moreover,  it was shown in \cite{CS} that the class of $\K_2$ algebras includes Artin-Schelter regular algebras of global dimension 4 on three linear generators and graded complete intersections.  That paper also classifies, via a simple combinatorial algorithm, the $\K_2$ (non-commutative) monomial algebras.

The authors' current project began with the wish to understand the $\K_2$ property for commutative monomial algebras.  We concentrated our attention on face rings.

Given any simplicial complex $\D$ on vertex set $[n]=\{1, \ldots, n\}$, the Stanley Reisner ideal of $\D$ is the square-free monomial ideal $I_\D$ in 
$S = k[x_1,\ldots, x_n]$ generated by the monomials $\Pi_{i\in \tau} x_i$ for $\tau\subset [n]$, $\tau\notin\D$. The factor algebra $k[\D]=k[x_1,\ldots,x_n]/I_{\D}$ is called the Stanley-Reisner ring, or \emph{face ring}, of the simplicial complex $\D$.

The rich interplay between combinatorics, topology and the properties of the face ring $k[\D]$ are well known (see for example \cite{MS} and \cite{Stanley}), but little seems to be known about the algebraic structure of $E(k[\D])$. At least the Koszul case is well-understood via a theorem of  Fr\"{o}berg (\cite{Fro}): If $I_\D$ is generated by quadratic monomials then $k[\D]$ is Koszul.  Of course $I_\D$ is generated by quadratic monomials if and only if the minimal missing faces of $\D$ consist solely of edges. 

It seems natural to look for combinatorial or geometric conditions on $\D$ that would ensure the algebra $k[\D]$ is $\K_2$.  We will show that the problem is considerably more subtle than the Koszul case. 

Although the main theorem of this paper was heavily influenced by our study of face rings, the theorem itself is not  about face rings at all.  We require one additional definition before stating the theorem.

\begin{defn}
Let $A$ be a graded $k$-algebra and let $M$ be a graded left $A$-module. Let  $D_1(A)$ be the subalgebra of $E(A)$ generated by $E^1(A)$ and let $D_2(A)$ be the subalgebra of $E(A)$ generated by $E^1(A)+E^2(A)$.  For $n=1, 2$, we call $M$ a $\K_n$ $A$-module if $\Ext_A(M,k)$ is generated as a left $D_n(A)$-module by $\Ext_A^0(M,k)$. We call $M$ a Koszul $A$-module if $M$ is $\K_1$ and there exists $d\in\Z$ such that $M=AM_d$.
\end{defn}

We note that $A$ is a $\K_2$ algebra if and only if $_Ak$ is a $\K_2$ $A$-module and  $A$ is a $\K_1$ algebra in the sense of \cite{PhanThesis} if and only if $_Ak$ is a $\K_1$ $A$-module. We also note that $M$ is a Koszul module if and only if there exists a $d\in\Z$ such that $M(d)$ is a Koszul module in the sense of \cite{PP}.
(Obviously, one could define the notion of a $\K_n$ algebra or module for any $n>2$, however we see no motivation for doing so at this time.)

Our main theorem is  the following extension of an important theorem on Koszul algebras due to Backelin and Fr\"{o}berg \cite{BackFro}:

 \begin{thm} 
  \label{factorThmIntro}
  Let $A$ be a Koszul algebra and $\I\subset A$ an ideal. Assume $B=A/\I$ acts trivially on $\Ext_A(B,k)$.
 If $_A\I$ is a $\K_2$ $A$-module, then $B=A/\I$ is a $\K_2$ algebra.
 \end{thm}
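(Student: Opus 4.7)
The plan is to apply the Cartan-Eilenberg change-of-rings spectral sequence associated to the surjection $A \to B$:
\begin{equation*}
E_2^{p,q} = \Ext_B^p\bigl(k,\, \Ext_A^q(B, k)\bigr) \Longrightarrow E^{p+q}(A).
\end{equation*}
The triviality hypothesis on the $B$-action collapses the $E_2$ term to $E^p(B) \otimes_k \Ext_A^q(B, k)$, and since $A$ is Koszul the abutment $E^n(A)$ is concentrated in internal degree $n$. This diagonally-concentrated abutment is the principal lever of the argument.

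Applying $\Hom_A(-, k)$ to $0 \to \I \to A \to B \to 0$ yields $\Ext_A^q(B, k) \cong \Ext_A^{q-1}(\I, k)$ for all $q \geq 1$ (the augmentation of $A$ restricts trivially to $\I$ because $\I \subseteq A_{\geq 2}$). Because $A$ is Koszul, $D_2(A) = E(A)$, so the $\K_2$ hypothesis on $_A\I$ gives $\Ext_A^{q-1}(\I, k) = E^{q-1}(A) \cdot \Ext_A^0(\I, k)$. Since $\Ext_A^0(\I, k)$ is supported in internal degrees $\geq 2$, $\Ext_A^q(B, k)$ lives in internal degrees $\geq q + 1$, and every row $q \geq 1$ of the $E_2$-page sits strictly above the diagonal.

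I would then induct on cohomological degree $n$ to show $E^n(B) \subseteq \langle E^1(B), E^2(B) \rangle$, the subalgebra of $E(B)$ generated by $E^1(B)+E^2(B)$. The base cases $n \leq 2$ are trivial. For $n \geq 3$, any $\xi \in E^n(B)$ of internal degree $r > n$ cannot survive to the diagonally-concentrated abutment, so it lies in the image of some differential $d_s: E_s^{n-s,\, s-1} \to E_s^{n,\, 0}$. The source is a subquotient of $E^{n-s}(B) \otimes \Ext_A^{s-1}(B, k)$; using the $\K_2$ factorization $\Ext_A^{s-1}(B, k) = E^{s-2}(A) \cdot \Ext_A^0(\I, k)$ and interpreting the $E(A)$-action through the multiplicative structure of the spectral sequence and the Yoneda product on $E(B)$ (with $\Ext_A^0(\I, k)$ identifying via the edge map with the defining-relation generators of $E^2(B)$), one writes $\xi$ as a Yoneda product of elements of strictly lower cohomological degree, which lie in $\langle E^1(B), E^2(B) \rangle$ by the inductive hypothesis.

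The main obstacle is the careful bigraded bookkeeping required to match the spectral-sequence differentials with honest Yoneda products in $E(B)$ --- higher $d_s$ are Massey-product-like rather than genuine products --- and to verify that every factorization produced uses only cohomological degrees $1$ and $2$. The Koszulness of $A$ is essential: it prevents higher-cohomological-degree classes in $E(A)$ from forcing new generators in $E(B)$, while the $\K_2$ structure of $_A\I$ supplies exactly the degree-$2$ classes accounting for the possibly non-quadratic relations of $B$. An alternative route would construct a $B$-resolution of $k$ directly by splicing the Koszul resolution of $_Ak$ with a minimal $A$-resolution of $_A\I$; the $\K_2$ structure of $\I$ then controls the splicing and should likewise yield generators in cohomological degrees $1$ and $2$ only.
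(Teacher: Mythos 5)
Your overall strategy coincides with the paper's: the change-of-rings spectral sequence $E_2^{p,q}=\Ext_B^p(k,\Ext_A^q(B,k))\Rightarrow\Ext_A^{p+q}(k,k)$, the collapse $E_2^{p,q}\isom\Ext_A^q(B,k)\tensor E^p(B)$ coming from the triviality hypothesis, the purity of the abutment coming from Koszulity of $A$, and an induction in which a first class of $E^n(B)$ outside the subalgebra generated by $E^1(B)+E^2(B)$ leads to a contradiction. The dimension shift $\Ext_A^q(B,k)\isom\Ext_A^{q-1}(\I,k)$ and the internal-degree estimates are also as in the paper.

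However, the step you flag as ``the main obstacle'' is the actual content of the theorem, and the mechanism you sketch for it would not work as stated for the differentials $d_s$ with $s\ge 3$. Your plan is to use the edge-map identification of $\Ext_A^0(\I,k)$ with the relation classes in $E^2(B)$ and then push the $E(A)$-translate description of the source $E^{n-s}(B)\tensor\Ext_A^{s-1}(B,k)$ through $d_s$. But $d_s$ is computed from the \emph{last} leg of a length-$s$ staircase, not the first, and the $E(A)$-module structure visible at the bottom of the staircase says nothing a priori about where the top horizontal differential lands; this is precisely the Massey-product phenomenon you mention. The paper's device is different in kind: it shows (Lemmas \ref{lifting2}, \ref{isom}, \ref{ENpage}) that staircase representatives can be chosen so that the top component $a_s$ satisfies $a_s(e)(1)\in I_0^{A_{\ge 2}}$ for every basis vector $e$; minimality of the $B$-projective resolution $Q^\bullet$ then forces $d_ha_s$ to be supported only on those basis vectors of $Q^{p+s}$ whose rows in the matrix of $\partial_Q$ contain a linear entry, and by the matrix criterion for $\K_1$ modules (Proposition \ref{K2matrixCond}, applicable because $_A\I$ is $\K_1$ once $A$ is quadratic, by Corollary \ref{K2ModsAreKoszul}) these correspond exactly to classes in $E^1(B)\star E^{s+p-1}(B)$. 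Thus for $s\ge 3$ the decomposables produced by $d_s$ arise from left multiplication by $E^1(B)$, not from $E^2(B)$ via the edge map; only $d_2$ produces the $E^2(B)$ factors (Proposition \ref{survivesToE3}). Without this, or an equivalent control on the higher differentials, your induction does not close.
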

 
 The technical hypothesis that $B$ acts trivially on $\Ext_A(B,k)$ is satisfied when $A$ is commutative or graded-commutative, making Theorem \ref{factorThmIntro} particularly useful in those settings.  
 
Theorem \ref{factorThmIntro} is proved in Section \ref{factorK2} as Theorem \ref{factorThm}.  Section \ref{Notation} deals with technical notation and Section \ref{modules} gives technical information on Koszul and $\K_2$-modules. Section \ref{CLmods} contains several useful lemmas on the closely related notion of modules having a componentwise linear resolution.  We return to face rings in Section \ref{faceRings} where we show that a well-known class of face rings are indeed $\K_2$.

\begin{thm}
\label{faceRingsIntro}
Let $\D$ be a simplicial complex and $\D^*$ its Alexander dual. If $\D^*$ is (sequentially) Cohen-Macaulay over $k$, then $k[\D]$ is a $\K_2$ algebra.
\end{thm}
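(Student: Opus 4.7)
The strategy is to reduce the theorem to Theorem \ref{factorThmIntro} by taking $A = S = k[x_1,\ldots,x_n]$ and $\I = I_\D$, so that $B = k[\D]$. The polynomial ring $S$ is Koszul, and since $S$ is commutative, the remark immediately following Theorem \ref{factorThmIntro} ensures that $k[\D]$ acts trivially on $\Ext_S(k[\D],k)$. Thus the entire theorem reduces to showing that $I_\D$ is a $\K_2$ $S$-module.

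The bridge between the Cohen-Macaulay hypothesis on $\D^*$ and the module-theoretic structure of $I_\D$ is the classical theorem of Herzog-Reiner-Welker (extending the Eagon-Reiner theorem): $\D^*$ is sequentially Cohen-Macaulay over $k$ if and only if $I_\D$ has a componentwise linear minimal free resolution over $S$. So under our hypothesis we may assume that $I_\D$ is componentwise linear, and the remaining content of the theorem is the implication \emph{componentwise linear $\Rightarrow$ $\K_2$-module} over a Koszul algebra. This is precisely what Section \ref{CLmods} is advertised to supply.

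To see why this implication should hold, note that if $M$ is componentwise linear, then for each $d$ the submodule $M_{\la d\ra} := A M_d$ has a linear resolution, so $M_{\la d\ra}$ is a Koszul $A$-module in the sense of the definition given above. The components $M_{\la d\ra}$ assemble into a finite filtration $0 = F_0 \subset F_1 \subset \cdots \subset F_r = M$ whose successive quotients $F_i/F_{i-1}$ are Koszul modules concentrated in strictly increasing generating degrees. Chasing the long exact $\Ext$-sequences attached to these extensions, one shows by induction on $i$ that $\Ext_A(F_i,k)$ is generated over $D_2(A)$ by its degree-zero piece: the base case uses that Koszul modules are $\K_1$, and each inductive step uses the connecting homomorphism to import new generators at cohomological degree $2$ (never higher).

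The main technical hurdle is the inductive step, specifically the assertion that the connecting maps in the long exact sequences produce elements lying in $E^2(A)\cdot\Ext_A(-,k)$, and that no generators of cohomology degree $\geq 3$ need to be adjoined. This requires a careful comparison of internal degrees: the jump in generating degree between $F_{i-1}$ and $F_i/F_{i-1}$ corresponds exactly to a cohomological obstruction class in $E^2(A)$ because both quotients have linear (Koszul-type) resolutions but are generated in different degrees. Controlling this bookkeeping uses the description of $\Ext$ over Koszul algebras developed in Sections \ref{Notation} and \ref{modules}. Once this step is in hand, Theorem \ref{factorThmIntro} delivers the conclusion that $k[\D]$ is $\K_2$.
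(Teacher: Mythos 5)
Your proposal follows the paper's own route exactly: reduce to Theorem \ref{factorThmIntro} with $A=S$ and $\I=I_\D$, invoke the Eagon--Reiner/Herzog--Hibi equivalence between (sequential) Cohen--Macaulayness of $\D^*$ and componentwise linearity of $I_\D$ (Theorems \ref{CMcomplex} and \ref{SCMcomplexes}), and then show componentwise linear modules over Koszul algebras are $\K_2$ via the filtration by the submodules $M_{\la b,j\ra}$ with Koszul successive quotients --- this is precisely Lemmas \ref{baseCase}--\ref{intervalModulesK2} and Proposition \ref{CLimpliesK2}, packaged as Corollary \ref{SCMimpliesK2}. One small correction to your description of the inductive step: the connecting homomorphisms do not ``import new generators in $E^2(A)$'' --- they vanish outright, since $\Ext_A(F_{i-1},k)$ is $D_2(A)$-generated by $\Ext_A^0(F_{i-1},k)$ and the degree-zero connecting map is already zero by the surjectivity of $\Ext_A^0(F_i,k)\rightarrow\Ext_A^0(F_{i-1},k)$, so the long exact sequence splits into short exact sequences (Lemma \ref{K2Extensions}(1)).
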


In the final two sections of the paper, we discuss several illustrative examples of face rings.  As well as exhibiting many subtleties, these examples show that the sufficient conditions of Theorems \ref{factorThmIntro} and \ref{faceRingsIntro} are not neccessary.  They also proved (negative) answers to some questions from \cite{CS}.

\section{Notation}
\label{Notation}

 Let $A$ be a graded $k$-algebra. We denote the category of locally finite dimensional, $\Z$-graded left (resp.\ right) $A$-modules with degree 0 homomorphisms by $\textbf{Gr-A-mod}^{f}$ (resp.\ $\textbf{Gr-mod-A}^{f}$). The term \emph{graded left (resp.\ right) $A$-module} will refer to an object of the appropriate module category. We do not assume that modules are finitely generated. 
 
If $M$ is a graded $A$-module, we write $M(d)$ for the shifted graded module with $M(d)_n=M_{n+d}$ for all $n\in\Z$. When it will cause no confusion, we denote  $\Hom_{\textbf{Gr-A-mod}^{f}}(M,N)$ and $\Hom_{\textbf{Gr-mod-A}^{f}}(M,N)$ by $\hom_A(M,N)$. For $n\in\Z$, we define $\Hom^n_A(M,N)=\hom_A(M,N(-n))$. One can think of $\Hom_A^n(M,N)$ as the space of graded homomorphisms $f:M\rightarrow N$ which lower every homogeneous element's degree by $n$. The graded $\Hom$ functor is $\Hom^*_A(M,N)=\bigoplus_{n\in\Z} \Hom^n_A(M,N)$. The $i$-th right derived functors of $\Hom_A^*(M,-)$ and $\Hom_A^*(-,N)$ will be denoted respectively by $\Ext^i_A(M,-)$ and $\Ext^i_A(-,N)$. 

For each $i\in\N$, the $k$-vector space $\Ext_A^i(M,N)$ inherits a $\Z$-grading from the graded $\Hom$ functor. We call this the \emph{internal} grading. The homogeneous (internal) degree $j$ component of $\Ext_A^i(M,N)$ is denoted $\Ext^{i,j}_A(M,N)$. The vector space $\Ext_A(M,N)=\bigoplus_{i\in\N} \Ext_A^i(M,N)$ is therefore bigraded. 

If $L$, $M$, and $N$ are graded left $A$-modules, the Yoneda composition product gives a bilinear, associative pairing $$\star:\Ext_A^{i,j}(M,N)\otimes \Ext_A^{k,l}(L,M)\rightarrow \Ext_A^{i+k, j+l}(L,N)$$
If $L=M=N=k$, the Yoneda product gives the bigraded vector space $\Ext_A(k,k)$ the structure of a $k$-algebra. As mentioned in the introduction, we call this algebra the \emph{Yoneda algebra} of $A$, denoted $E(A)$. If $M=N=k$, then the Yoneda product makes $\Ext_A(L,k)$ into a graded left $E(A)$-module. Similarly, $\Ext_A(k,N)$ is a graded right $E(A)$-module.
The definition of the Yoneda algebra is unchanged if $L$, $M$, and $N$ are right modules. See Section \ref{factorK2} for more details.

\section{Koszul and $\K_2$ modules} 
\label{modules}

In this section we establish a few facts about extensions of Koszul and $\K_2$ modules. We also record useful characterizations of $\K_1$ and $\K_2$ modules in terms of minimal graded projective resolutions.

\begin{lemma}
\label{K2Extensions}
Let $0\rightarrow L\rightarrow M\rightarrow N\rightarrow 0$ be a graded exact sequence of graded left $A$-modules.
\begin{enumerate}
\item If  $L$ and $N$ are $\K_2$ $A$-modules and if the natural homomorphism ${\Ext_A^0(M,k)\rightarrow \Ext_A^0(L,k)}$ is surjective,  $M$ is a $\K_2$ $A$-module.
\item If  $A$ is a Koszul algebra and $L$ and $M$ are $\K_2$ $A$-modules and $N$ is generated in degrees strictly greater than the degrees in which $L$ is generated, then $N$ is a $\K_2$ $A$-module.
\end{enumerate}
\end{lemma}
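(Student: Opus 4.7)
For both parts, the plan is to apply $\Ext_A(-,k)$ to the short exact sequence and analyze the resulting long exact sequence, exploiting the fact that the connecting homomorphism $\partial : \Ext_A^i(L,k) \to \Ext_A^{i+1}(N,k)$ is a left $E(A)$-module homomorphism (realized as Yoneda multiplication by the extension class $e \in \Ext_A^{1,0}(N,L)$). The overall strategy in each part is first to force $\partial\equiv 0$ so that the long exact sequence splits into short exact sequences in each cohomological degree, and then to use the $\K_2$ property of the two given modules in the sequence, together with those splittings, to deduce the desired generation of $\Ext$ for the third.

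For part (1), surjectivity of $\Ext_A^0(M,k)\to\Ext_A^0(L,k)$ is equivalent, by exactness, to $\partial$ vanishing on $\Ext_A^0(L,k)$. Since $L$ is $\K_2$ we have $\Ext_A(L,k)=D_2(A)\cdot\Ext_A^0(L,k)$, and since $\partial$ is $D_2(A)$-linear, $\partial\equiv 0$. The long exact sequence then splits into short exact sequences
\[
0 \to \Ext_A^i(N,k) \to \Ext_A^i(M,k) \to \Ext_A^i(L,k) \to 0.
\]
Given $\eta\in\Ext_A^i(M,k)$ I would write its image in $\Ext_A^i(L,k)$ as $\sum d_j \star \ell_j$ using the $\K_2$ property of $L$, lift each $\ell_j$ to $\widetilde\ell_j\in\Ext_A^0(M,k)$ by the surjectivity hypothesis, and observe that $\eta - \sum d_j\star\widetilde\ell_j$ maps to zero in $\Ext_A^i(L,k)$ and hence pulls back from $\Ext_A^i(N,k)$. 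Applying the $\K_2$ property of $N$ to this remaining piece and using that $\Ext_A^0(N,k)\hookrightarrow\Ext_A^0(M,k)$ (another consequence of the splittings), one obtains an expression for $\eta$ in $D_2(A)\cdot\Ext_A^0(M,k)$.

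For part (2) I would deduce $\partial\equiv 0$ from a pure internal-degree count instead. Let $d_0$ denote the top generating degree of $L$, so $N$ is generated in degrees $\ge d_0+1$. A standard minimal-resolution argument shows $\Ext_A^{i,j}(N,k)=0$ whenever $j<i+d_0+1$. On the other hand, since $A$ is Koszul and $L$ is $\K_2$, $\Ext_A(L,k)=E(A)\cdot\Ext_A^0(L,k)$ with $E(A)$ supported on the diagonal $E^{i,i}$, so $\Ext_A^{i,j}(L,k)=0$ unless $j\le i+d_0$. The extension class lies in $\Ext_A^{1,0}(N,L)$, so $\partial$ has internal degree $0$; the disjointness of the two supports forces $\partial\equiv 0$. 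The long exact sequence again splits, and the resulting short exact sequences split cleanly by internal degree. Given $\eta\in\Ext_A^{i,j}(N,k)\hookrightarrow\Ext_A^{i,j}(M,k)$, the $\K_2$ property of $M$ over the Koszul algebra $A$ yields $\eta=\sum e_k\star\mu_k$ with $e_k\in E^{i,i}(A)$ and $\mu_k\in\Ext_A^{0,j-i}(M,k)$. Since $j-i\ge d_0+1$ strictly exceeds every generating degree of $L$, each $\mu_k$ automatically lies in $\Ext_A^{0,j-i}(N,k)$, and pulling back along the injection gives the required expression in $\Ext_A(N,k)$.

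The principal technical step is the degree bookkeeping in part (2): confirming that the supports of $\Ext_A(L,k)$ and $\Ext_A(N,k)$ lie on opposite sides of the line $j=i+d_0$. The whole argument rests on this disjointness, which itself depends on using Koszulness of $A$ to collapse $\K_2$ for $L$ down to Koszul-module behavior. In part (1) the most delicate point is simply verifying that the Yoneda products computed in $\Ext_A(M,k)$ genuinely restrict to Yoneda products in $\Ext_A(N,k)$ under the injection, but this is immediate once every map in the long exact sequence is recognized as $E(A)$-linear.
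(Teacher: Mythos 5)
Your proposal is correct and takes essentially the same approach as the paper: in both parts the connecting homomorphism is killed (via $E(A)$-linearity plus the $\K_2$ property of $L$, or equivalently your internal-degree disjointness), the long exact sequence splices into short exact sequences of $E(A)$-modules, and part (2) rests on exactly the degree comparison the paper uses — $\Ext_A^i(N,k)$ supported above internal degree $p+i$ versus the contribution of $\Ext_A^0(L,k)$ supported at or below it. Your element-wise bookkeeping replaces the paper's explicit splitting $\Ext_A^0(M,k)=f_0(\Ext_A^0(N,k))\oplus W$, but the underlying argument is the same.
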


\begin{proof}
Let $D(A)=D_2(A)$.
 If $N$ is generated in homogeneous degrees strictly greater than the degrees in which $L$ is generated, then the natural map ${g_0:\Ext_A^0(M,k)\rightarrow\Ext_A^0(L,k)}$ is surjective. Thus for both (1) and (2),  the long exact sequence in cohomology has the form
$$0\rightarrow \Ext_A^0(N,k)\xrightarrow{f_0} \Ext_A^0(M,k)\xrightarrow{g_0} \Ext_A^0(L,k)\xrightarrow{0} \Ext_A^1(N,k)\rightarrow \cdots$$
$$\xrightarrow{f_i}\Ext_A^i(M,k)\xrightarrow{g_i} \Ext_A^i(L,k)\xrightarrow{h_i} \Ext_A^{i+1}(N,k)\xrightarrow{f_{i+1}}\cdots$$
This is a complex of left $E(A)$-modules. Since $L$ is a $\K_2$ module, we have $\Ext_A^i(L,k)=D^i(A)\star\Ext_A^0(L,k)\rightarrow D^i(A)\star\Ext_A^1(N,k)= \Ext_A^{i+1}(N,k)$ is zero for all $i$. We therefore obtain a short exact sequence
$$0\rightarrow \Ext_A(N,k)\rightarrow \Ext_A(M,k)\rightarrow \Ext_A(L,k)\rightarrow 0$$
of left $E(A)$-modules. Statement (1) now follows.

%To prove (2), let $h_0:\Ext_A^0(L,k)\rightarrow \Ext_A^0(M,k)$ be a graded vector space splitting of $g_0$. Consideration of internal degrees shows that $\im\ f_0\cap \im\ h_0=0$, so the map $h_0$  extends to a splitting of the $E(A)$-module homomorphism $\Ext_A(M,k)\rightarrow \Ext_A(L,k)$, and (2) follows.

To prove (2) we assume $A$ is Koszul and choose $p$ maximal such that $\Ext^{0,p}_A(L,k) \ne 0$. ÊBy hypothesis, $\Ext^0_A(N,k)$ is supported in internal degrees greater than $p$, from which it follows that for all $i\ge 0$, $\Ext^i_A(N,k)$ is supported in internal degrees greater than $p+i$. ÊChoose a bigraded vector space splitting $\Ext^0_A(M,k )= f_0(\Ext^0_A(N,k)) \oplus W$. The subspace Ê$ W$ is supported in internal degrees at most $p$. ÊSince $A$ is Koszul, $E^i(A)\star W$ is supported in internal degrees at most $p+i$. ÊHence if $f=\oplus_i f_i$, we have $f(\Ext_A(N,k)) \cap E(A)\star W = 0$. ÊSince $M$ is $\K_2$, ÊÊ$f(\Ext_A(N,k)) + E(A)\star W= \Ext_A(M,k)$. ÊThis splitting proves the result.

\end{proof}

The strictness in (2) cannot be weakened. See Example \ref{K2notCL}. The following Lemma should be well known.

\begin{lemma}
\label{KoszulExtensions}
Let $0\rightarrow L\rightarrow M\rightarrow N\rightarrow 0$ be a graded exact sequence of graded left $A$-modules.
\begin{enumerate}
\item If $L$ and $N$ are Koszul $A$-modules and the natural homomorphism ${\Ext_A^0(M,k)\rightarrow \Ext_A^0(L,k)}$ is surjective,  $M$ is a Koszul $A$-module.
\item If $L$ and $M$ are Koszul $A$-modules and the natural homomorphism ${\Ext_A^0(M,k)\rightarrow \Ext_A^0(L,k)}$ is surjective,  $N$ is a Koszul $A$-module.
\item If $M$ and $N$ are Koszul $A$-modules and the natural monomorphism $\Ext_A^0(N,k)\hookrightarrow \Ext_A^0(M,k)$ is an isomorphism, then $L$ is a Koszul $A$-module.
\end{enumerate}
\end{lemma}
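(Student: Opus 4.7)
My plan is to analyze the long exact sequence
$$\cdots \to \Ext^i_A(N,k) \xrightarrow{f_i} \Ext^i_A(M,k) \xrightarrow{g_i} \Ext^i_A(L,k) \xrightarrow{h_i} \Ext^{i+1}_A(N,k) \to \cdots$$
as a complex of graded left $E(A)$-modules, noting that $f_i$, $g_i$, and $h_i$ are all $E(A)$-linear (hence $D_1(A)$-linear). My strategy in each part is uniform: the given hypothesis on the $\Ext^0$ map forces the lowest nontrivial connecting morphism to vanish, and the Koszul (hence $\K_1$) property of one of the two given modules propagates this vanishing to all cohomological degrees. The resulting short exact sequence of graded $E(A)$-modules should then transfer purity and the $\K_1$ property to the third module.

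For (1), surjectivity of $g_0$ gives $h_0 = 0$, and since $L$ is $\K_1$ I can write any $\lambda \in \Ext^i(L,k)$ as $\lambda = \sum x_j \star \lambda_j$ with $x_j \in D_1^i(A)$ and $\lambda_j \in \Ext^0(L,k)$, so $h_i(\lambda) = \sum x_j \star h_0(\lambda_j) = 0$ by $E(A)$-linearity. The resulting short exact sequence sandwiches $\Ext^i(M,k)$ between $\Ext^i(N,k)$ and $\Ext^i(L,k)$, both pure in degree $i+d$ (where $d$ is the common generation degree, forced implicitly by the hypothesis), yielding purity of $\Ext^i(M,k)$ in degree $i+d$. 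For the $\K_1$ property of $M$: given $\beta \in \Ext^i(M,k)$, I use Koszulity of $L$ to write $g_i(\beta) = \sum x_j \star \lambda_j$, lift each $\lambda_j$ through $g_0$ to $\widetilde\lambda_j \in \Ext^0(M,k)$, and subtract to land in $\ker g_i = f_i(\Ext^i(N,k))$, which lies in $D_1(A) \star \Ext^0(M,k)$ by Koszulity of $N$.

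I would handle (2) and (3) analogously. In (2), the same vanishing argument for $h_i$ (using Koszulity of $L$) gives the short exact sequence, purity of $\Ext^i(N,k) \subseteq \Ext^i(M,k)$ is immediate, and a splitting $\Ext^0(M,k) = \Ext^0(N,k) \oplus W$ (with $W \xrightarrow{\cong} \Ext^0(L,k)$) supports a similar lifting argument for the $\K_1$ property of $N$. In (3), the isomorphism on $\Ext^0$ combined with Koszulity of $M$ forces $g_i = 0$ for all $i$, so the LES collapses into short exact sequences $0 \to \Ext^i(L,k) \to \Ext^{i+1}(N,k) \to \Ext^{i+1}(M,k) \to 0$ identifying $\Ext(L,k)$ with the kernel, shifted by one in cohomological degree. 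Purity then yields $\Ext^i(L,k)$ pure in degree $i + (d+1)$, and the $\K_1$ property of $L$ follows from $E(A)$-linearity of the connecting morphism.

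The hard part will be verifying the $\K_1$ property cleanly: the short exact sequences of $E(A)$-modules are not automatically $D_1(A)$-linearly split, so some bigraded bookkeeping (analogous to the splitting argument used in Lemma~\ref{K2Extensions}) is needed to show that $D_1(A)$ acting on $\Ext^0$ of the deduced module exhausts all of its $\Ext$. Part (3) will be especially delicate because of the cohomological shift introduced by the connecting morphism, though the essentials remain the same as in (1) and (2).
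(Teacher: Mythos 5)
Your proposal is correct and follows essentially the same route as the paper: break the long exact sequence of left $E(A)$-modules into short exact sequences by using the $\K_1$ property of $L$ (resp.\ the degree argument in (3)) to kill the connecting maps, then read off purity of the third term. The ``bigraded bookkeeping'' you worry about at the end is unnecessary: once $\Ext^i_A(-,k)$ is pure in internal degree $d+i$ for a module generated in the single degree $d$, the minimal resolution is linear and the $\K_1$ property follows automatically from Proposition~\ref{K2matrixCond}, which is all the paper means by ``follows immediately by considering degrees.''
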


\begin{proof}
Since Koszul modules are $\K_1$ modules, the proof of Lemma \ref{K2Extensions} shows that in (1) and (2), the long exact sequence in cohomology breaks into short exact sequences. Each conclusion follows immediately by considering degrees.

 For (3), suppose $N$ and $M$ are generated in homogeneous degree $d$ and consider the long exact sequence in cohomology
$$0\rightarrow \Ext_A^0(N,k)\xrightarrow{\isom} \Ext_A^0(M,k)\xrightarrow{0} \Ext_A^0(L,k)\rightarrow \Ext_A^1(N,k)\rightarrow \cdots$$
$$\rightarrow\Ext_A^i(M,k)\rightarrow \Ext_A^i(L,k)\rightarrow \Ext_A^{i+1}(N,k)\rightarrow\cdots$$
Since $\Ext_A^0(L,k)\hookrightarrow \Ext_A^1(N,k)$ and $N$ is a Koszul $A$-module, $\Ext_A^0(L,k)$ is concentrated in internal degree $d+1$. Since $M$ is a Koszul $A$-module, $\Ext_A^i(M,k)$ is concentrated in internal degree $d+i$, so  the map $\Ext_A^i(M,k)\rightarrow\Ext_A^i(L,k)$ is zero. Thus $\Ext_A^i(L,k)\hookrightarrow \Ext_A^{i+1}(N,k)$ for all $i$. Finally, since $\Ext_A^{i+1}(N,k)$ is concentrated in internal degree $d+i+1$, the same is true for $\Ext_A^i(L,k)$, hence $L$ is a Koszul $A$-module.
\end{proof}

We note that the surjectivity condition in (1) and (2) is satisfied for arbitrary graded modules when $N$ is generated in homogeneous degrees greater than or equal to the degrees in which $L$ is generated.
We do not expect an analog of Lemma \ref{KoszulExtensions}(3) for $\K_1$ or $\K_2$ modules. 
For example, submodules of $\K_1$ modules need not be $\K_1$, even with the hypothesis on $\Ext^0$. See Example \ref{submodule} below.

Next we recall the matrix criterion of  \cite{CS} for a left $A$-module $M$ to be a $\K_2$ module and state the analogous result for $\K_1$ modules. We need to introduce some notation. Let $V$ be a finite dimensional vector space, $A=T(V)/I$ a graded algebra and $M$ a graded left $A$-module. Let $Q^{\bullet}\rightarrow M$ be a minimal resolution of $M$ by graded projective left $A$-modules. Choose homogeneous bases for each $Q^i$ and let $M_i$ be the matrix of $d_i:Q^i\rightarrow Q^{i-1}$ with respect to these bases. Let $f_i$ be a lift of $M_i$ to a matrix over $T(V)_+$ with homogeneous entries. Let $L(f_i)$ denote $f_i$ mod $T(V)_{\ge 2}$. For $i\ge 0$, let $(f_{i+1}f_i)_{\text{ess}}$ denote the product $f_{i+1}f_i$ mod $I'$ where $I'=V\tensor I+I\tensor V$. Note that $f_0=0$. 

\begin{lemma}[\cite{CS}]
\label{K2matrix}
An $A$-module $M$ is a $\K_2$ $A$-module if and only if for all $0\le i< \text{pd}_A(M)$, the matrix $[(f_{i+1}f_i)_{\text{ess}}\ L(f_{i+1})]$ has linearly independent rows.
\end{lemma}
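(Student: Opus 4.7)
The plan is to translate the $\K_2$ condition into an inductive surjectivity condition on Yoneda-product maps, and then identify the two relevant maps with the two blocks of the matrix appearing in the statement.

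Because $Q^{\bullet}\to M$ is minimal, the differentials in $\Hom_A(Q^{\bullet},k)$ all vanish and $\Ext_A^i(M,k)\isom\Hom_A(Q^i,k)$ carries a basis dual to the chosen basis of $Q^i/A_+Q^i$. Since $D_2(A)$ is generated by $E^1(A)$ and $E^2(A)$, the module $M$ is $\K_2$ if and only if for every $i\ge 0$
\[
\Ext_A^{i+1}(M,k)=E^1(A)\star\Ext_A^i(M,k)+E^2(A)\star\Ext_A^{i-1}(M,k),
\]
with the convention $\Ext_A^{-1}(M,k)=0$. So it suffices to identify each of the two Yoneda-product images with the span determined by the relevant block of the matrix (and to note that at $i=0$, where $f_0=0$, the condition degenerates correctly to linear independence of the rows of $L(f_1)$).

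Next I would compute the two Yoneda actions concretely. The standard double-complex comparison of resolutions (see \cite{PP}) shows that the action of $\xi\in E^1(A)=V^*$ on $\Ext_A^i(M,k)$ is given by pairing $\xi$ with the entries of $L(f_{i+1})$, which lie in $V$. For $E^2$, note that the minimal resolution of $_Ak$ begins $A\tensor (I/I')\to A\tensor V\to A\to k\to 0$, so $E^2(A)\isom (I/I')^*$, where $I'=V\tensor I+I\tensor V$. The relation $d_i\circ d_{i+1}=0$ in $Q^{\bullet}$ forces the lifted product $f_{i+1}f_i$ to have entries in $I$; changing a lift $f_i$ by a matrix with entries in $I$ alters the product by an element of $I'$, so the class $(f_{i+1}f_i)_{\text{ess}}$ in $I/I'$ is well defined, and pairing $\eta\in E^2(A)\isom (I/I')^*$ against its entries gives the action of $\eta$ on $\Ext_A^{i-1}(M,k)$. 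With both actions identified, the combined image $E^1(A)\star\Ext_A^i(M,k)+E^2(A)\star\Ext_A^{i-1}(M,k)$ is precisely the image of the $k$-linear map whose matrix (after dualizing $V$ and $I/I'$) is the transpose of $[(f_{i+1}f_i)_{\text{ess}}\ L(f_{i+1})]$. That image exhausts $\Ext_A^{i+1}(M,k)$ if and only if this linear map is surjective, equivalently its transpose is injective, equivalently the rows of $[(f_{i+1}f_i)_{\text{ess}}\ L(f_{i+1})]$ are linearly independent.

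The main technical obstacle is the bookkeeping around the $E^2$-action: one has to verify that reduction modulo $I'$ simultaneously (a) kills the ambiguity in the choice of lift of $f_i$ to $T(V)$ and (b) correctly implements the pairing with $E^2(A)\isom(I/I')^*$, so that $(f_{i+1}f_i)_{\text{ess}}$ records exactly the nontrivial $E^2$-multiplications producing new classes in $\Ext_A^{i+1}(M,k)$. Once this identification is in place, the equivalence is pure linear algebra.
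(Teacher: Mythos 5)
This lemma is quoted from \cite{CS} rather than proved in the present paper, so the relevant comparison is with Lemma 4.3 and Theorem 4.4 of that reference; your proposal follows essentially the same route --- reduce $\K_2$-ness to the inductive condition $\Ext_A^{i+1}(M,k)=E^1(A)\star\Ext_A^i(M,k)+E^2(A)\star\Ext_A^{i-1}(M,k)$, identify the two Yoneda actions with $L(f_{i+1})$ and with $(f_{i+1}f_i)_{\text{ess}}$ via $E^2(A)\isom(I/I')^*$, and dualize to get row independence. Your sketch is correct, including the well-definedness of $(f_{i+1}f_i)_{\text{ess}}$ modulo $I'$ and the degenerate case $i=0$.
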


The following Proposition is proven by arguing as in Lemma 4.3 and Theorem 4.4 of \cite{CS}, replacing the trivial $A$-module with $M$.

\begin{prop}
\label{K2matrixCond}
Let $A$ be an $\N$-graded, connected algebra, and let $M$ be a graded $A$-module. For $0\le i\le pd_A(M)$ let $f_i$ be as above. Then $\Ext_A^i(M,k)=E^1(A)\star \Ext_A^{i-1}(M,k)$ if and only if $L(f_i)$ has linearly independent rows. Thus $M$ is a $\K_1$ $A$-module if and only if $L(f_i)$ has linearly independent rows for all $1\le i\le pd_A(M)$.
\end{prop}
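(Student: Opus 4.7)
The plan is to mimic the proofs of Lemma 4.3 and Theorem 4.4 of \cite{CS}, replacing $_Ak$ with the module $M$. Since the resolution $Q^\bullet \to M$ is minimal, the differentials in $\Hom_A(Q^\bullet, k)$ vanish, so $\Ext_A^i(M,k) \cong \Hom_A(Q^i, k)$, and the chosen homogeneous basis of $Q^i$ gives a canonical dual basis of $\Ext_A^i(M,k)$. The heart of the proof is an explicit description of the Yoneda product map $E^1(A) \otimes \Ext_A^{i-1}(M,k) \to \Ext_A^i(M,k)$ in this basis.

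First, I would compute the Yoneda product explicitly. Given $\alpha \in E^1(A)$ and $\xi \in \Ext_A^{i-1}(M,k)$, the product $\alpha \star \xi$ arises from lifting a cocycle representing $\xi$ to a chain map on $Q^\bullet$ and composing with a cocycle representing $\alpha$. Since the differential on $Q^\bullet$ is given in the chosen bases by the matrices $M_i$, and the higher-order terms of a lift $f_i$ have entries in $A_{\geq 2}$, which annihilate $k$, only the linear part $L(f_i)$ survives. Consequently the Yoneda product map and the dual of matrix multiplication by $L(f_i)$ have precisely the same image inside $\Hom_A(Q^i, k)$.

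From there, the equivalence is immediate: the Yoneda product surjects onto $\Ext_A^i(M,k)$ precisely when $L(f_i)$ has linearly independent rows, because linear independence of rows is exactly the condition for the corresponding dual map to be surjective. The final clause about $\K_1$-modules follows by a straightforward induction on $i$: $M$ is $\K_1$ iff $\Ext_A^i(M,k)$ is generated from $\Ext_A^0(M,k)$ by iterated Yoneda multiplication with $E^1(A)$, which is equivalent to the one-step statement $\Ext_A^i(M,k) = E^1(A)\star \Ext_A^{i-1}(M,k)$ holding for every $i$ in the range $1 \leq i \leq pd_A(M)$.

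The main obstacle is the first step: carefully carrying out the chain-level computation of the Yoneda product to verify that it coincides with the map determined by $L(f_i)$, with the correct bookkeeping of signs and bases. However, the analogous computation is already performed in \cite{CS} for $M = {_Ak}$, and since no property specific to the trivial module is used beyond its being a graded left $A$-module with a chosen minimal projective resolution, that computation transports to our setting essentially verbatim. The $\K_1$ case is also simpler than the $\K_2$ case in \cite{CS}, since only $L(f_i)$ (and not $(f_{i+1}f_i)_{\text{ess}}$) is relevant.
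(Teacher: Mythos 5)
Your proposal is correct and matches the paper's approach exactly: the paper offers no independent argument, stating only that the Proposition ``is proven by arguing as in Lemma 4.3 and Theorem 4.4 of \cite{CS}, replacing the trivial $A$-module with $M$,'' which is precisely the strategy you outline (and you supply more of the chain-level detail than the paper does).
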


 The following characterization of Koszul modules over quadratic algebras will be useful in Section \ref{CLmods}.

\begin{cor}
\label{K2ModsAreKoszul}
If $A$ is a quadratic algebra and $M$ is a graded left $A$-module, then $M$ is a $\K_2$ $A$-module if and only if $M$ is a $\K_1$ $A$-module. Furthermore, if $M$ is generated in a single homogeneous degree, then $M$ is a $\K_2$ $A$-module if and only if $M$ is a Koszul $A$-module.
\end{cor}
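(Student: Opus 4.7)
The plan is to reduce both statements to a single observation: for quadratic $A$, the subalgebra $D_2(A)$ of $E(A)$ coincides with $D_1(A)$. Granted this, the first claim is immediate, since the $\K_n$ condition on $M$ depends only on the subalgebra $D_n(A)$ of $E(A)$; and the second claim follows because a Koszul module is by definition a $\K_1$ module generated in a single homogeneous degree, so for such $M$ we have Koszul $\Leftrightarrow \K_1 \Leftrightarrow \K_2$.

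To establish $D_1(A) = D_2(A)$, I would first observe that for quadratic $A = T(V)/(R)$ with $R \subseteq V \otimes V$, the space $E^2(A)$ is concentrated in internal degree $2$. This is visible from the start of the minimal graded projective resolution of ${}_Ak$, which takes the form $A(-2)^{\dim R} \to A(-1)^{\dim V} \to A \to k \to 0$, and so yields $E^{2,2}(A) = R^*$ and $E^{2,j}(A) = 0$ for $j > 2$.

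The key step is to identify the Yoneda product $E^{1,1}(A) \otimes E^{1,1}(A) \to E^{2,2}(A)$, under the natural identifications $E^{1,1}(A) = V^*$ and $E^{2,2}(A) = R^*$, with the transpose of the relation inclusion $R \hookrightarrow V \otimes V$. Since that inclusion is injective, its transpose is surjective, so $E^{2,2}(A) = E^1(A) \cdot E^1(A) \subseteq D_1(A)$; combined with $E^1(A) \subseteq D_1(A)$, this gives $D_2(A) = D_1(A)$, as desired.

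The main obstacle is exactly this identification of the Yoneda pairing as the transpose of the relation inclusion. This is a classical fact about quadratic algebras, essentially the statement that the diagonal $\bigoplus_n E^{n,n}(A)$ contains the quadratic dual $A^! = T(V^*)/(R^{\perp})$, which is generated in degree $1$. I would either cite it from the standard literature (e.g.\ \cite{PP}) or verify it directly from the bar complex using the description of the resolution above; no further technical idea should be required.
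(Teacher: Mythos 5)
Your proof is correct, but it takes a different route from the paper's. The paper argues via the matrix criterion for minimal resolutions: if $M$ fails to be $\K_1$, then some linearization $L(f_i)$ has dependent rows, so after a change of basis some row of $f_i$ has no linear entries; the corresponding row of $f_if_{i-1}$ then has entries of degree $\ge 3$, and since a quadratic ideal satisfies $I_{>2}=I'$, that row of $[(f_if_{i-1})_{\mathrm{ess}}\ L(f_i)]$ vanishes, contradicting Lemma \ref{K2matrix}. You instead work entirely at the level of the Yoneda algebra, showing $D_1(A)=D_2(A)$ because $E^2(A)=E^{2,2}(A)=R^*$ and the Yoneda pairing $E^{1,1}\otimes E^{1,1}\to E^{2,2}$ is the (surjective) transpose of $R\hookrightarrow V\otimes V$; the equivalence of $\K_1$ and $\K_2$ is then immediate from the definitions, and the Koszul statement follows since a Koszul module is by definition a $\K_1$ module generated in a single degree. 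Your key fact is classical and is even asserted parenthetically in the paper's introduction (``$E^{2,2}(A)$ is always generated by $E^1(A)$''), so citing \cite{PP} for it is legitimate. Your argument is more conceptual and avoids the resolution-matrix machinery entirely; the paper's argument is more computational but is self-contained given the criteria of Lemma \ref{K2matrix} and Proposition \ref{K2matrixCond}, which it needs elsewhere anyway. Both are complete proofs.
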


\begin{proof}
It suffices to prove that $M$ is a $\K_2$ $A$-module only if $M$ is a $\K_1$ $A$-module.
If $L(f_i)$ has linearly dependent rows for some $1\le i\le \text{pd}_A(M)$, then after changing basis we may assume the first row of $f_i$ contains no linear entries. This implies that the nonzero entries of the corresponding row of $f_i f_{i-1}$ have degree at least 3. As $A$ is quadratic, $I_{>2}= I'$. So the first row of $[ (f_{i}f_{i-1})_{\text{ess}}\ L(f_i)]$ is zero, and $M$ is not a $\K_2$ $A$-module by Proposition \ref{K2matrixCond}.

\end{proof}

\section{Componentwise Linear Resolutions}
\label{CLmods}

Let $M$ be a graded left $A$-module. Throughout this section, we additionally assume that $M$ is bounded below and let $b\in\Z$ denote the smallest integer such that $M_b\neq 0$. If the submodule $AM_i$ is a Koszul $A$-module for all $i$, we say that $M$ \emph{has a componentwise linear $A$-module resolution}. Our definition is motivated by the notion of componentwise linear ideal introduced in \cite{HH} and studied further in \cite{ReinerStamate}. In this section, we prove that all modules over Koszul algebras having a componentwise linear resolution are $\K_2$ (equivalently, $\K_1$) modules. We also characterize which $\K_2$ modules over Koszul algebras have componentwise linear resolutions. Following the notation of \cite{ReinerStamate}, let $M_{\la j\ra}=AM_j$ and for $i\le j$, let $M_{\la i,j \ra}=\sum_{t=i}^j AM_t$. 

\begin{lemma} 
\label{baseCase}
If $A$ is a Koszul algebra and $M_{\la j\ra}$ is a Koszul $A$-module for some $j\in\Z$, then
\begin{enumerate}
\item  $M_{\la j\ra}\cap M_{\la j+1\ra}$ is a Koszul $A$-module, and 
\item $M_{\la j+1\ra}$ is a Koszul $A$-module if and only if $M_{\la j,j+1\ra}/M_{\la j\ra}$ is a Koszul module.
\end{enumerate}
\end{lemma}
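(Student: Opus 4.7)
The plan is to apply Lemma \ref{KoszulExtensions} to two carefully chosen short exact sequences, together with the fact that $_Ak$ is itself a Koszul module because $A$ is Koszul. The observation underlying both parts is the identification
\[ M_{\la j\ra}\cap M_{\la j+1\ra} \;=\; A_+M_j, \]
the submodule of $M_{\la j\ra}$ consisting of its elements of degree $\ge j+1$. This holds because $A$ is generated in degree $1$, so for every $n\ge j+1$ one has
\[ (M_{\la j\ra})_n \;=\; A_{n-j}M_j \;=\; A_{n-j-1}(A_1M_j) \;\subseteq\; A_{n-j-1}M_{j+1} \;=\; (M_{\la j+1\ra})_n, \]
while the intersection is clearly zero in degree $j$. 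The same identity $A_nM_j = A_{n-1}(A_1M_j)$ also shows that $A_+M_j$ is generated in degree $j+1$ by the single subspace $A_1M_j$.

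For (1), I would apply Lemma \ref{KoszulExtensions}(3) to the short exact sequence
\[ 0 \to A_+M_j \to M_{\la j\ra} \to M_{\la j\ra}/A_+M_j \to 0. \]
The quotient is isomorphic to $M_j$ concentrated in degree $j$ with trivial $A_+$-action, so as a graded $A$-module it is a direct sum of shifted copies of $_Ak(-j)$ and therefore Koszul. The middle term is Koszul by hypothesis, and the induced map $\Ext^0_A(M_{\la j\ra}/A_+M_j, k)\to \Ext^0_A(M_{\la j\ra}, k)$ is an isomorphism because both spaces are naturally identified with $(M_j)^*$ via the identity on $M_j$. Thus the hypotheses of Lemma \ref{KoszulExtensions}(3) are met, giving that $A_+M_j = M_{\la j\ra}\cap M_{\la j+1\ra}$ is Koszul.

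For (2), I would use the second isomorphism theorem to form
\[ 0 \to M_{\la j\ra}\cap M_{\la j+1\ra} \to M_{\la j+1\ra} \to M_{\la j,j+1\ra}/M_{\la j\ra} \to 0. \]
All three modules are generated in degree $j+1$ (the left one by $A_1M_j$ as noted, the middle one by definition, and the right one as a quotient of the middle), the left term is Koszul by (1), and the map $\Ext^0_A(M_{\la j+1\ra}, k)\to \Ext^0_A(M_{\la j\ra}\cap M_{\la j+1\ra}, k)$ is dual to the inclusion $A_1M_j\hookrightarrow M_{j+1}$ and therefore surjective. If $M_{\la j+1\ra}$ is Koszul, Lemma \ref{KoszulExtensions}(2) yields Koszulity of $M_{\la j,j+1\ra}/M_{\la j\ra}$; conversely, if $M_{\la j,j+1\ra}/M_{\la j\ra}$ is Koszul, Lemma \ref{KoszulExtensions}(1) yields Koszulity of $M_{\la j+1\ra}$.

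The only real subtlety will be the initial identification $M_{\la j\ra}\cap M_{\la j+1\ra} = A_+M_j$ and the accompanying fact that this intersection is generated in the single degree $j+1$; once those are in hand, the remainder of the argument is bookkeeping with Lemma \ref{KoszulExtensions} and the $\Ext^0$-computation for modules generated in a single homogeneous degree.
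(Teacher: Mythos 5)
Your proof is correct and follows essentially the same route as the paper: part (1) applies Lemma \ref{KoszulExtensions}(3) to $0\to M_{\la j\ra}\cap M_{\la j+1\ra}\to M_{\la j\ra}\to T\to 0$ with $T$ a trivial (hence Koszul) module, and part (2) applies Lemma \ref{KoszulExtensions}(1),(2) to $0\to M_{\la j\ra}\cap M_{\la j+1\ra}\to M_{\la j+1\ra}\to M_{\la j,j+1\ra}/M_{\la j\ra}\to 0$ using that all three modules are generated in degree $j+1$. Your explicit identification $M_{\la j\ra}\cap M_{\la j+1\ra}=A_+M_j$ just makes precise what the paper uses implicitly.
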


\begin{proof}
For $j\in\Z$ we have the exact sequence
$0\rightarrow M_{\la j\ra}\cap M_{\la j+1\ra}\rightarrow M_{\la j\ra}\rightarrow T\rightarrow 0$
where $T=M_{\la j\ra}/M_{\la j\ra}\cap M_{\la j+1\ra}$ is a trivial $A$-module. Hence $T$ is a Koszul $A$-module. Since $M_{\la j\ra}\cap M_{\la j+1 \ra}$ is concentrated in degrees $\ge j+1$, $\Ext_A^0(T,k)\isom \Ext_A^0(M_{\la j\ra}, k)$. Since $M_{\la j\ra}$ is a Koszul $A$-module, Lemma \ref{KoszulExtensions}(3) implies $M_{\la j\ra}\cap M_{\la j+1 \ra}$ is a Koszul $A$-module.

To prove (2), we consider the exact sequence
$$0\rightarrow M_{\la j\ra}\cap M_{\la j+1\ra}\rightarrow M_{\la j+1\ra}\rightarrow M_{\la j,j+1\ra}/M_{\la j\ra}\rightarrow 0$$ 
 Since all three modules are generated in homogeneous degree $j+1$, the  map $\Ext_A^0(M_{\la j+1\ra},k)\rightarrow \Ext_A^0( M_{\la j\ra}\cap M_{\la j+1\ra},k)$ is surjective. The result follows by Lemma \ref{KoszulExtensions}(1),(2).

\end{proof}

\begin{rmk}
\label{baseCase2}
In our applications of Lemma \ref{baseCase}, we require the slightly modified statements that $M_{\la d,j\ra}\cap M_{\la j+1\ra}$ is a Koszul $A$-module for all $d<j$ and $M_{\la j+1\ra}$ is a Koszul $A$-module if and only if $M_{\la d,j+1\ra}/M_{\la d, j\ra}$ is a Koszul $A$-module. These are obviously equivalent to (1) and (2) above.
\end{rmk}

\begin{lemma}
\label{intervalModulesK2}
If $A$ is a Koszul algebra and $M$ has a componentwise linear $A$-module resolution, then
 $M_{\la d,j\ra}$ is a $\K_2$ $A$-module for all $d,j\in\Z$ such that $d\le j$.
\end{lemma}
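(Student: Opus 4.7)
The plan is to argue by induction on $j-d \ge 0$. For the base case $j=d$, the module $M_{\langle d,d\rangle} = AM_d = M_{\langle d\rangle}$ is Koszul by the hypothesis that $M$ has a componentwise linear resolution, and every Koszul module is in particular a $\K_2$ module.

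For the inductive step, I would consider the short exact sequence
$$0 \to M_{\langle d, j-1\rangle} \to M_{\langle d, j\rangle} \to M_{\langle d, j\rangle}/M_{\langle d, j-1\rangle} \to 0.$$
By the inductive hypothesis, $L := M_{\langle d, j-1\rangle}$ is a $\K_2$ $A$-module. Since $M_{\langle j\rangle} = AM_j$ is Koszul by assumption, the equivalence in Remark \ref{baseCase2} (applied with the index shift $j \mapsto j-1$) yields that $N := M_{\langle d, j\rangle}/M_{\langle d, j-1\rangle}$ is a Koszul $A$-module, hence also $\K_2$. The submodule $L$ is generated in homogeneous degrees from $\{d, d+1, \ldots, j-1\}$, while the quotient $N$ is generated solely in degree $j$ (since it is spanned over $A$ by the image of $M_j$). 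Thus the generator degrees of $N$ are strictly greater than those of $L$, which (as noted in the opening lines of the proof of Lemma \ref{K2Extensions}) automatically forces the natural map $\Ext^0_A(M_{\langle d, j\rangle}, k) \to \Ext^0_A(L, k)$ to be surjective. The hypotheses of Lemma \ref{K2Extensions}(1) are therefore satisfied, and we conclude that $M_{\langle d, j\rangle}$ is a $\K_2$ $A$-module.

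The whole argument is essentially a clean induction once the correct short exact sequence is identified; the only substantive ingredients are the componentwise linearity hypothesis (which feeds the quotient into Remark \ref{baseCase2}) and the closure of $\K_2$ under extensions from Lemma \ref{K2Extensions}(1). The closest thing to an obstacle is making sure that the surjectivity hypothesis of Lemma \ref{K2Extensions}(1) actually holds in this setting — but this falls out immediately from the strict inequality on generator degrees, since the quotient is generated in the single top degree $j$ while $L$ is generated in degrees at most $j-1$.
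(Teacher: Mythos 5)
Your proof is correct and follows essentially the same route as the paper's: the same induction on $j-d$, the same short exact sequence, Remark \ref{baseCase2} to see that the quotient is Koszul, and Lemma \ref{K2Extensions}(1) to conclude. Your explicit verification of the surjectivity hypothesis via the strict inequality of generator degrees is a detail the paper leaves implicit, but it is exactly right.
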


\begin{proof}
By Lemma \ref{baseCase}(2), $M_{\la d,j\ra}/M_{\la d,j-1\ra}$ is a Koszul $A$-module for all $j\in\Z$ and all $d<j$.
We consider the exact sequence
$$0\rightarrow M_{\la d,j-1\ra}\rightarrow M_{\la d,j\ra}\rightarrow M_{\la d,j\ra}/M_{\la d,j-1\ra}\rightarrow 0$$
and prove the result by induction on $j-d$. If $j=d$, $M_{\la d,j\ra}$ is a Koszul $A$-module by assumption. For the induction step, we assume $M_{\la d,j-1\ra}$ is  a $\K_2$ $A$-module. Since $M_{\la d,j\ra}/M_{\la d,j-1\ra}$ is a Koszul $A$-module, Lemma \ref{K2Extensions}(1) implies that $M_{\la d,j\ra}$ is a $\K_2$ $A$-module. The result follows by induction.

\end{proof}

\begin{prop} 
\label{CLimpliesK2}
Let $A$ be a Koszul algebra and let $M$ be an $A$-module having a componentwise linear resolution. If $b\in\Z$ is minimal such that $M_b\neq 0$, then $M_{\la b,j\ra}$ is a $\K_2$ $A$-module for all $j\ge b$. In particular, $M$ is a $\K_2$ $A$-module.
\end{prop}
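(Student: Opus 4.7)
The first claim is Lemma \ref{intervalModulesK2} applied with $d=b$, so the content lies in the ``in particular'' assertion that $M$ itself is $\K_2$. The approach I would take is to exploit the exhaustive filtration $M=\bigcup_{j\ge b} M_{\la b,j\ra}$ in order to transfer the $\K_2$ property from the truncations to $M$. The cornerstone of the argument is a stability statement: for every triple $(i,n,j)$ with $j\ge n$, the inclusion $\iota_j:M_{\la b,j\ra}\hookrightarrow M$ induces an isomorphism $\iota_j^*:\Ext_A^{i,n}(M,k)\xrightarrow{\isom}\Ext_A^{i,n}(M_{\la b,j\ra},k)$. I would derive this from the long exact cohomology sequence of $0\to M_{\la b,j\ra}\to M\to M/M_{\la b,j\ra}\to 0$: the quotient $M/M_{\la b,j\ra}$ is generated in internal degrees $\ge j+1$, so every term of its minimal projective resolution is also generated in degrees $\ge j+1$, forcing $\Ext_A^{*,n}(M/M_{\la b,j\ra},k)=0$ for all $n\le j$.

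Once the stability isomorphism is in hand, the rest is formal. Given a nonzero homogeneous $\xi\in\Ext_A^{i,n}(M,k)$, I would pick $j\ge n$ and set $\xi'=\iota_j^*(\xi)$. Using that $M_{\la b,j\ra}$ is $\K_2$, write $\xi'=\sum_\ell \alpha_\ell\star \eta_\ell$ with $\alpha_\ell\in D_2(A)$ homogeneous and $\eta_\ell\in\Ext_A^{0,n_\ell}(M_{\la b,j\ra},k)$. Since the Yoneda product is additive in internal degree, each $n_\ell\le n\le j$, so stability in bidegree $(0,n_\ell)$ lifts every $\eta_\ell$ uniquely to some $\tilde\eta_\ell\in\Ext_A^{0,n_\ell}(M,k)$. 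Because $\iota_j^*$ is $E(A)$-linear, both $\xi$ and $\sum_\ell \alpha_\ell\star\tilde\eta_\ell$ have image $\xi'$; by the stability isomorphism in bidegree $(i,n)$ they must coincide. Hence $\Ext_A^0(M,k)$ generates $\Ext_A(M,k)$ as a left $D_2(A)$-module, and $M$ is $\K_2$.

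The step I expect to require the most care is the stability isomorphism, specifically the vanishing $\Ext_A^{*,n}(M/M_{\la b,j\ra},k)=0$ for $n\le j$; we assume nothing about the quotient $M/M_{\la b,j\ra}$ beyond its generating degrees, so the vanishing must come purely from the elementary estimate that in a minimal projective resolution each syzygy module is bounded below by the previous term, without invoking Koszulity of $A$ or componentwise linearity of the quotient. After that, the matching of Yoneda decompositions across the filtration is essentially bookkeeping on internal degrees.
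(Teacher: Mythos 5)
Your argument is correct and takes essentially the same approach as the paper: compare $M$ with a truncation $M_{\la b,j\ra}$ via the long exact sequence of $0\to M_{\la b,j\ra}\to M\to M/M_{\la b,j\ra}\to 0$, use the degree bound on the quotient to identify Ext groups in low internal degrees, and transfer $\K_2$-generation through the $E(A)$-linear connecting maps. The only cosmetic difference is the choice of truncation index --- the paper fixes the bidegree $(p,b+q)$ and cuts at $j=b+q-p$, needing only injectivity there plus surjectivity on $\Ext^0$, whereas you cut at any $j\ge n$ and get full stability isomorphisms.
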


\begin{proof}
The first statement follows immediately from Lemma \ref{intervalModulesK2}. To prove that $M$ is $\K_2$, we show  $\Ext_A^{p,b+q}(M,k)$ is $E(A)$-generated by $\Ext_A^{0,b+q-p}(M,k)$ by considering the long exact sequence in cohomology associated to the exact sequence $0\rightarrow M_{\la b,b+q-p\ra}\rightarrow M\rightarrow F\rightarrow 0$.
Since $F$ is generated in degrees $\ge b+q-p+1$, $\Ext_A^p(F,k)$ is concentrated in internal degrees $\ge b+q+1$. Thus the natural map
$\Ext_A^0(M,k)\rightarrow \Ext_A^0(M_{\la b,b+q-p\ra}, k)$ is surjective in internal degrees $\le b+q-p$ and
$\Ext_A^{p,b+q}(M,k)\hookrightarrow \Ext_A^{p,b+q}(M_{\la b,b+q-p\ra},k)$. Since the morphisms in the long exact sequence respect the left $E(A)$-module structure, the result follows from the assumption that $M_{\la b, b+q-p\ra}$ is a $\K_2$ $A$-module.

\end{proof}

Let $M$ be a bounded below graded left $A$-module and recall we let $b$ be the smallest integer such that $M_b\neq 0$. We say $M$ is  a \emph{strongly $\K_2$} $A$-module if $M_{\la b,j\ra}$ is a $\K_2$ $A$-module for all $j\ge b$. We note that if $M$ is strongly $\K_2$ and $A$ is a quadratic algebra, then $M_{\la b\ra}$ is a Koszul $A$-module by Corollary \ref{K2ModsAreKoszul}. For an example of a $\K_2$ module over a Koszul algebra that is not strongly $\K_2$, see Example \ref{K2notCL}.

\begin{lemma}
\label{stronglyK2FactorMod}
Let $A$ be a Koszul algebra and $M$ a strongly $\K_2$ $A$-module. Let $b$ be the smallest integer such that $M_b\neq 0$. Then $F=M_{\la b,j\ra}/M_{\la b,j-1\ra}$ is a Koszul $A$-module for all $j>b$.
\end{lemma}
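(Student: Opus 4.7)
The strategy is to extract $F$ from a short exact sequence relating the two $\K_2$ modules provided by the strongly-$\K_2$ hypothesis, apply Lemma \ref{K2Extensions}(2) to deduce $F$ is $\K_2$, and then upgrade $\K_2$ to Koszul using that $F$ is generated in a single degree.

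The key exact sequence is
$$0\to M_{\la b,j-1\ra}\to M_{\la b,j\ra}\to F\to 0.$$
By the strongly $\K_2$ hypothesis, both $L:=M_{\la b,j-1\ra}$ and $M_{\la b,j\ra}$ are $\K_2$ $A$-modules. I would first observe that $F$ is generated in the single degree $j$: elements of $M_{\la b,j\ra}$ in degrees $\le j-1$ already lie in $M_{\la b,j-1\ra}$, so $F$ is generated by the image of $M_j$. In particular, $F$ is generated in degrees strictly greater than $j-1$, which is the largest degree needed to generate $L$.

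Now I would apply Lemma \ref{K2Extensions}(2): since $A$ is Koszul, $L$ and $M_{\la b,j\ra}$ are $\K_2$, and $F$ is generated in degrees strictly greater than those generating $L$, the lemma gives that $F$ is a $\K_2$ $A$-module. Finally, Koszul algebras are quadratic, so Corollary \ref{K2ModsAreKoszul} applies: a $\K_2$ module over a quadratic algebra that is generated in a single homogeneous degree is a Koszul module. Hence $F$ is Koszul, completing the proof.

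There is no serious obstacle here; the only thing to be careful about is confirming the generation degree of $F$ so that Lemma \ref{K2Extensions}(2) legitimately applies (the strictness is essential, as the remark after that lemma warns). Everything else is a direct invocation of results already established in Sections \ref{modules} and \ref{CLmods}.
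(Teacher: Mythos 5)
Your proof is correct and is essentially identical to the paper's: the same short exact sequence, the same application of Lemma \ref{K2Extensions}(2) to get that $F$ is $\K_2$, and the same use of Corollary \ref{K2ModsAreKoszul} (with $F$ generated in the single degree $j$) to upgrade to Koszul. Your extra care in checking the strict generation-degree hypothesis is a welcome detail the paper leaves implicit.
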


\begin{proof} Let $j>b$.
Applying Lemma \ref{K2Extensions}(2) to the exact sequence
$$0\rightarrow M_{\la b,j-1\ra}\rightarrow M_{\la b,j\ra}\rightarrow F\rightarrow 0$$ 
where $F=M_{\la b,j\ra}/M_{\la b,j-1\ra}$ shows that $F$ is a $\K_2$ $A$-module.  Since $F$ is generated in homogeneous degree $j$, Corollary \ref{K2ModsAreKoszul} implies that $F$ is a Koszul $A$-module.

\end{proof}

We now prove the converse of Proposition \ref{CLimpliesK2}.

\begin{prop}
\label{converse}
If $A$ is a Koszul algebra and if $M$ is a strongly $\K_2$ $A$-module, then $M$ has a componentwise linear resolution.
\end{prop}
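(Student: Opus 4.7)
The plan is to prove that $M_{\la j\ra}$ is a Koszul $A$-module for every $j \ge b$, by induction on $j - b$; by definition this is exactly the assertion that $M$ has a componentwise linear resolution.

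For the base case $j = b$, I would note that $M_{\la b\ra} = M_{\la b,b\ra}$, so the strongly $\K_2$ hypothesis immediately gives that this module is $\K_2$. Since $A$ is Koszul it is in particular quadratic, and $M_{\la b\ra}$ is generated in the single homogeneous degree $b$, so Corollary \ref{K2ModsAreKoszul} upgrades ``$\K_2$'' to ``Koszul''. For the inductive step, suppose $M_{\la j\ra}$ is Koszul for some $j \ge b$, and aim at $M_{\la j+1\ra}$. Here I would invoke the modified form of Lemma \ref{baseCase}(2) recorded in Remark \ref{baseCase2}, applied with $d = b$: since $M_{\la j\ra}$ is Koszul, $M_{\la j+1\ra}$ is Koszul if and only if the factor $M_{\la b, j+1\ra}/M_{\la b, j\ra}$ is Koszul. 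But this factor is Koszul by Lemma \ref{stronglyK2FactorMod} applied to the strongly $\K_2$ module $M$ (with the index shift $j \rightsquigarrow j+1$, which is legal since $j+1 > b$).

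The genuine content of the converse has really been packaged into the earlier lemmas: Corollary \ref{K2ModsAreKoszul} converts ``$\K_2$ in a single degree'' into ``Koszul'', and Lemma \ref{stronglyK2FactorMod} produces the Koszul quotient on which the inductive step hinges. I therefore expect no new obstacle in the proposition itself; the proof should reduce to the bookkeeping above. The one point that requires care is indexing, namely checking that the inductive hypothesis $M_{\la j\ra}$ Koszul is enough to activate the equivalence from Remark \ref{baseCase2} with $d = b$, and that the output of Lemma \ref{stronglyK2FactorMod} matches the quotient appearing there. Both checks are automatic for $d = b$ and $j \ge b$, so the induction closes.
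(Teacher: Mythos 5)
Your proof is correct and follows essentially the same route as the paper: the base case via Corollary \ref{K2ModsAreKoszul}, the quotient's Koszulness via Lemma \ref{stronglyK2FactorMod}, and the inductive step via Lemma \ref{baseCase}/Remark \ref{baseCase2} with $d=b$. The only difference is that the paper unwinds the ``if and only if'' of Lemma \ref{baseCase}(2) by writing the exact sequence $0\rightarrow M_{\la b,b+t\ra}\cap M_{\la b+t+1\ra}\rightarrow M_{\la b+t+1\ra}\rightarrow M_{\la b,b+t+1\ra}/M_{\la b,b+t\ra}\rightarrow 0$ and invoking Lemma \ref{KoszulExtensions}(1) directly, whereas you cite the packaged equivalence; these are the same argument.
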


\begin{proof}
Let $b$ be the smallest integer such that $M_b\neq 0$.
Since $A$ is quadratic, $M_{\la b\ra}$ is a Koszul $A$-module by Corollary \ref{K2ModsAreKoszul}. Assume inductively that $M_{\la b+t\ra}$ is a Koszul $A$-module for $t\ge 0$ and 
consider the exact sequence
$$0\rightarrow M_{\la b,b+t\ra}\cap M_{\la b+t+1\ra}\rightarrow M_{\la b+t+1\ra}\rightarrow M_{\la b,b+t+1\ra}/M_{\la b,b+t\ra}\rightarrow 0$$
 By the induction hypothesis and by Lemma \ref{baseCase}(1), ${M_{\la b,b+t\ra}\cap M_{\la b+t+1\ra}}$ is a Koszul $A$-module. Lemma \ref{stronglyK2FactorMod} implies that $M_{\la b,b+t+1\ra}/M_{\la b,b+t\ra}$ is a Koszul $A$-module. As all three modules are generated in homogeneous degree $b+t+1$, the result follows by Lemma \ref{KoszulExtensions}(1).

\end{proof}

Combining Propositions \ref{CLimpliesK2} and \ref{converse}, we obtain a characterization of modules over Koszul algebras having componentwise linear resolutions.

\begin{cor}
If $A$ is a Koszul algebra, then $M$ has a componentwise linear $A$-module resolution if and only if $M$ is a strongly $\K_2$ $A$-module.
\end{cor}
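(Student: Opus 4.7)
The plan is straightforward: this corollary is essentially a repackaging of the two immediately preceding propositions, so the proof reduces to checking that the definition of \emph{strongly $\K_2$} lines up precisely with the conclusion of Proposition \ref{CLimpliesK2} and the hypothesis of Proposition \ref{converse}. First I would recall the definition: if $b$ is the least integer with $M_b\ne 0$, then $M$ is strongly $\K_2$ exactly when $M_{\la b,j\ra}$ is a $\K_2$ $A$-module for every $j\ge b$.

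For the forward direction, assume $M$ has a componentwise linear $A$-module resolution, so each $M_{\la j\ra}=AM_j$ is a Koszul $A$-module. Proposition \ref{CLimpliesK2} then asserts exactly that $M_{\la b,j\ra}$ is a $\K_2$ $A$-module for every $j\ge b$, which is the definition of strongly $\K_2$. For the reverse direction, assume $M$ is strongly $\K_2$; Proposition \ref{converse} directly concludes that $M$ has a componentwise linear resolution. Combining these two implications yields the stated equivalence.

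Because the real work has already been carried out, there is no genuine obstacle at this stage; the only minor point worth noting in the write-up is that $M$ is assumed bounded below (so that $b$ is well-defined), which is the standing hypothesis of Section \ref{CLmods}. If I were reconstructing the argument from scratch, the hard part would not be in this corollary but in establishing the strictness hypothesis of Lemma \ref{K2Extensions}(2) used inside Lemma \ref{stronglyK2FactorMod}, and in the inductive bookkeeping with the filtration $\{M_{\la b,b+t\ra}\}_{t\ge 0}$ that drives Proposition \ref{converse}. For the present statement, however, a one-line proof citing the two propositions suffices.
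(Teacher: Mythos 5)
Your proof is correct and matches the paper exactly: the corollary is stated there as an immediate consequence of combining Propositions \ref{CLimpliesK2} and \ref{converse}, which is precisely your argument. Nothing further is needed.
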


\bigskip

\section{$\K_2$ Factor Algebras}
\label{factorK2}

Recall that the \emph{socle} of an $A$-module $M$, denoted $\soc(M)$ is the unique maximal semisimple submodule of $M$. If $A$ is a graded $k$-algebra, a simple module in $\textbf{Gr-A-mod}^{f}$ is isomorphic to ${k}(d)$ for some $d\in\Z$. Thus  we have $\soc(M)=M^{A_+}=\{m\in M: am=0, \forall a\in A_+\}$. More generally, we define $M^I=\{m\in M: am=0, \forall a\in I\}$ for any ideal $I$ in $A$.

Let $k$ be a field and let $V$ be a finite dimensional $k$-vector space on basis $X=\{x_1,\ldots,x_n\}$. Let $A=T(V)/R$ be a graded $k$-algebra generated by $V$. We identify $x_i$ with its image in $A$. We give the algebra $A$ the usual $\N$-grading by tensor degree with $\deg(x_i)=1$ for $i=1,\ldots,n$. Let $\I\subset A_{\ge 2}$ be a graded ideal generated by homogeneous elements of $A$ and let $B=A/ \I$. The algebra $B$ inherits an $\N$-grading from $A$. 

%If $^{\circ}:A\rightarrow A$ is a graded involution, then a left $A$-module $M$ may be given a right $A$-module structure via $m\cdot a=a^{\circ}m$. We denote this right $A$-module $M^{\circ}$. With this definition, $\Hom_{\textbf{Gr-A-mod}^f}(M,N)=\Hom_{\textbf{Gr-mod-A}^f}(M^{\circ},N^{\circ})$, and $-^{\circ}:\textbf{Gr-A-mod}^{f}\rightarrow \textbf{Gr-mod-A}^f$ is an isomorphism of categories. We note, however, that $M$ need not be an $A-A$ bimodule. The isomorphism $-^{\circ}$ is exact, hence it takes projective and injective left modules respectively to projective and injective right modules.

Let $(P^{\bullet},\partial_P)$ be a minimal resolution of the trivial right $A$-modue $k_A$ by 
graded projective right $A$-modules with degree 0 differential. The augmentation map is denoted $\epsilon:P^0\rightarrow k$. We denote the graded dual $\Hom$ functor $\Hom_k(-,k)$ by $-^*$.

%Done
\begin{lemma}
\label{injRes}
The complex $I_{\bullet}=(P^{\bullet})^*$ is a resolution of $_Ak\isom (k_A)^*$ by graded injective left $A$-modules. The coaugmentation $\epsilon^{\vee}:k\rightarrow I_0$ is given by $1\mapsto \epsilon$.
\end{lemma}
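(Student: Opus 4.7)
The plan is to apply the graded $k$-dual functor to the given right resolution of $k_A$ and verify three things: (i) the resulting complex is exact, (ii) the coaugmentation has the asserted form, and (iii) each term $(P^i)^*$ is a graded injective left $A$-module.

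For (i) and (ii), I would first observe that the graded dual $(-)^* = \Hom_k(-,k)$ is exact on locally finite-dimensional graded vector spaces, since it acts componentwise as the finite-dimensional $k$-linear dual. Applying it to the resolution $\cdots \to P^1 \to P^0 \xrightarrow{\epsilon} k \to 0$ thus yields the exact complex
$$0 \to k^* \xrightarrow{\epsilon^*} (P^0)^* \to (P^1)^* \to \cdots$$
of graded left $A$-modules, where the left action on each $(P^i)^*$ is given by $(a\cdot f)(p) = f(pa)$. One checks directly that $(k_A)^* \cong {}_Ak$ as graded left $A$-modules via the identification sending the functional $\mathrm{id}_k \in k^*$ to $1 \in k$: indeed $(a\cdot \mathrm{id}_k)(x) = \mathrm{id}_k(xa) = \epsilon(a)x$, which matches the trivial left action. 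Under this identification, $\epsilon^\vee := \epsilon^*$ sends $1 \mapsto \mathrm{id}_k\circ \epsilon = \epsilon$, as claimed.

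For (iii), I would invoke the graded Hom–tensor adjunction: for any graded left $A$-module $M$ and graded right $A$-module $N$, there is a natural isomorphism of graded $k$-vector spaces
$$\Hom_A^*\bigl(M,\, \Hom_k(N,k)\bigr) \;\cong\; \Hom_k(N \otimes_A M,\, k).$$
Since $P^i$ is projective, hence flat, as a right $A$-module, the functor $P^i \otimes_A (-)$ is exact on graded left $A$-modules. Composing with the exact graded $k$-dual shows that $\Hom_A^*\bigl(-, (P^i)^*\bigr)$ is an exact functor in the first variable, so $(P^i)^*$ is a graded injective left $A$-module.

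The only real (and mild) obstacle is bookkeeping the grading conventions throughout: confirming that the adjunction respects internal degrees, that the graded dual of a locally finite-dimensional module remains locally finite-dimensional (so it genuinely lives in $\textbf{Gr-A-mod}^{f}$), and that the left action on the dual is the standard one. Once these conventions are fixed the argument is formal.
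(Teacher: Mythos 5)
Your proof is correct and follows essentially the same route as the paper's (which simply asserts that exactness of $-^*$ gives the resolution and that injectivity of each $(P^j)^*$ ``follows easily'' from exactness of $-^*$ and local finite-dimensionality of $P^j$). Your Hom--tensor adjunction argument with flatness of the projective $P^i$ is exactly the standard justification the paper leaves implicit, and your grading caveats are the right ones to check.
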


\begin{proof}
Since $k$ is semisimple, the functor $-^*$ is exact, hence $I_{\bullet}$ is an $A$-linear resolution of $k^*$ with the stated coaugmentation. 
%The induced differential on $I_{\bullet}$ given by  $f\mapsto f\partial_P$ is also $A$-linear. For $f\in I_j$, $a\in A$, and $x\in P^{j+1}$, since $\partial_P$ is (left) $A$-linear, we have
%$$((af) \partial_P)(x)=(af)(\partial_P(x))=f(a^{\circ}\partial_P(x))=f(\partial_P(a^{\circ}x))=(a(f\partial_P))(x)$$
%Thus $I_{\bullet}$ is a resolution by graded left $A$-modules. It remains to show that for each $j$, $I_j$ is injective. 
That $I_j$ is injective for each $j$ follows easily from the exactness of $-^*$ and the fact that $P^j$ is locally finite-dimensional.

\end{proof}

We denote the induced differential on $I_{\bullet}$ by $\partial^I$. We assume that $P^0=A$, that $P^1=A(-1)^{\oplus n}$, and that homogeneous $A$-bases are chosen for $P^1$ and $P^0$ such that the matrix of the differential $P^1\rightarrow P^0$ is given by left multiplication by $(x_1\ \cdots\ x_n)$.  

We take $H^*(\Hom_A(k,I_{\bullet}))$ as our model for the bigraded Yoneda Ext-algebra $E(A)=\bigoplus E^{n,m}(A)=\bigoplus\Ext_A^{n,m}(k,k)$.  We note that $\soc(I_j)$ is generated by the graded $k$-linear duals of an $A$-basis for $P^j$ and thus $\soc(I_j)\subset \im\ \partial_j^I$. Identifying $\Hom_A(k,I_j)$ with $\soc(I_j)$, we conclude that the differential on $\Hom_A(k,I_{\bullet})$ is zero and $E^j(A)=\Hom_A(k,I_j)\isom \soc(I_j)$.

We let $J_{\bullet}=\Hom_A(B,I_{\bullet})$ and we take $H^*(J_{\bullet})$ as our model for $\Ext_A(B,k)$. We denote  by $\partial^J$ the differential on $J_{\bullet}$ induced by $\partial^I$. Clearly, $g\in\soc(J_p)$ if and only if $g(1)\in\soc(I_p)$.

Let $(Q^{\bullet},{\partial_Q})$ be a minimal resolution of the trivial $B$-module $_Bk$ by graded projective left $B$-modules. We take $H^*(\Hom_B(Q^{\bullet},k))=\Hom_B(Q^{\bullet},k)$ as our model for $E(B)$.

The Cartan-Eilenberg change-of-rings spectral sequence
$$E_2^{p,q}=\Ext_B^p(k,\Ext_A^q(B,k))\Rightarrow \Ext_A^{p+q}(k,k)$$
 is the spectral sequence associated with the first-quadrant double cocomplex
 $$A^{p,q}=\Hom_B(Q^p,\Hom_A(B,I_q)).$$
 This is a spectral sequence of bigraded $E(A)-E(B)$ bimodules (see Lemmas 6.1 and 6.2 of \cite{CS}).  The horizontal differential $d_h$ is precomposition with $\partial_Q$. The vertical differential $d_v$ is  composition with $\partial^J$. The left $E(A)$ action on $E_2^{p,q}$ is induced by the Yoneda product $\Ext_A^m(k,k)\otimes\Ext_A^n(B,k)\rightarrow \Ext_A^{m+n}(B,k)$. The right $E(B)$ action is given by the Yoneda product $$\Ext_B^m(k,\Ext_A(B,k))\otimes \Ext_B^n(k,k)\rightarrow \Ext_B^{m+n}(k,\Ext_A(B,k))$$
 We will make these module actions more explicit below.
 
 We recall the standard ``staircase argument'' for double cocomplexes. Let $(A^{p,q},d_h,d_v)$ be a first-quadrant double cocomplex. For $N>1$ and for $p,q\ge 0$ let
 $\cS_N^{p,q}$ denote the subset of $\Pi_{i=0}^{N-1} A^{p+i,q-i}$ consisting of $N$-tuples $(a_1,\ldots, a_N)$ such that $d_va_1=0$ and  $d_ha_i+d_va_{i+1}=0$ for all  $1\le i\le N-1$.
 
 \begin{lemma}
 \label{staircase}
% Let $(A^{p,q},d_h, d_v)$ be a double complex and let $N>1$. For $p,q \ge0$, let $(a_1,\ldots,a_N)\in \Pi_{i=0}^{N-1} A^{p+i,q-i}$. If $d_h a_i+d_v a_{i+1}=0$ for $1\le i\le N-1$ and $d_v a_1=0$, then $a_1$ survives to $E_N^{p,q}$ and $d_N([a_1])=[d_h a_N]$.   
 If $(a_1,\ldots, a_N)\in \cS_N^{p,q}$, then $a_1$ survives to $E_N^{p,q}$ and $d_N([a_1])=[d_ha_N]$.
 Every class in $E_N^{p,q}$ can be represented by an element of $\cS_N^{p,q}$, and the representation is unique modulo elements in $\cS_N^{p,q}$ of the form
 \begin{itemize}
\item $(d_h \alpha,0,\ldots,0)$ where $\alpha\in A^{p-1,q}$, 
\item  $(0,\ldots,0,d_v \beta, d_h\beta, 0,\ldots 0)$ where  $\beta\in A^{p+i-1,q-i}$ and $d_v\beta$ is in position $i$ for $1\le i\le n-1$, and
\item $(0,\ldots,0,\delta)$ where $\delta\in A^{p+N-1,q-N+1}$.
\end{itemize}
 \end{lemma}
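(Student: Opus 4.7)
The plan is to identify $\cS_N^{p,q}$ with the standard staircase lifting data for the column-filtered spectral sequence of the total complex associated to $(A^{p,q}, d_h, d_v)$. Form $\text{Tot}(A)^n = \bigoplus_{p+q=n} A^{p,q}$ with total differential $d = d_h + d_v$ (using the convention $d_h d_v + d_v d_h = 0$, so that $d^2 = 0$), and let $F^p\text{Tot}(A) = \bigoplus_{p' \ge p} A^{p',*}$ denote the column filtration. The spectral sequence of the lemma is the one associated with this filtration: $E_1^{p,q} = H^q(A^{p,*}, d_v)$, the differential $d_r \colon E_r^{p,q} \to E_r^{p+r, q-r+1}$ is induced by $d$, and $r$-cocycles are given by $Z_r^{p,q} = \{x \in F^p\text{Tot}(A) : dx \in F^{p+r}\text{Tot}(A)\}$ modulo $F^{p+1}$.

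For the first assertion, given $(a_1,\ldots,a_N) \in \cS_N^{p,q}$, the sum $s = a_1 + a_2 + \cdots + a_N \in F^p \text{Tot}(A)^{p+q}$ satisfies
\[
ds = d_v a_1 + \sum_{i=1}^{N-1}(d_h a_i + d_v a_{i+1}) + d_h a_N = d_h a_N,
\]
which lies in column $p+N$, so $s \in Z_N^{p,q}$ and $d_N[a_1] = [d_h a_N]$ by the definition of $d_N$. For the surjectivity part I would induct on $N$: a class in $E_N^{p,q}$ comes from a class in $E_{N-1}^{p,q}$, which by the induction hypothesis is represented by some $(a_1,\ldots,a_{N-1}) \in \cS_{N-1}^{p,q}$, and the vanishing $[d_h a_{N-1}] = 0$ in $E_{N-1}^{p+N-1, q-N+2}$, combined with the inductive uniqueness statement, allows us to modify the tuple by ambiguity corrections so that $d_h a_{N-1} + d_v a_N = 0$ admits a solution $a_N \in A^{p+N-1, q-N+1}$.

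For uniqueness, it suffices to show that a null tuple $(b_1,\ldots,b_N) \in \cS_N^{p,q}$ is a sum of the three listed forms. Each listed form is immediately null: types (a) and (b) are total coboundaries (with (a) the edge case of (b) where $d_v \alpha$ would fall outside the tuple), and type (c) lies in $Z_{N-1}^{p+1, q-1}$, which is quotiented to zero in $E_N^{p,q}$. Conversely, the subquotient description $E_N^{p,q} = Z_N^{p,q}/(Z_{N-1}^{p+1, q-1} + d Z_{N-1}^{p-N+1, q+N-2})$ expresses the staircase $s_b = \sum b_i$ as $y + dz$ modulo $F^{p+1}$, where $y \in Z_{N-1}^{p+1, q-1}$ and $z = z_1 + \cdots + z_{N-1}$ is itself an $(N-1)$-staircase starting in column $p-N+1$; expanding $dz$ column by column produces a telescoping sum of type (b) contributions, with a type (a) edge contribution arising from the top rung $z_{N-1}$, and $y$ likewise decomposes into further type (b) corrections and a type (c) term at position $N$. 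The main obstacle is the bookkeeping in this last step: carefully tracking how the $\cS_{N-1}$-relations on $z$ and on $y$ recombine into the listed ambiguity types within $\cS_N^{p,q}$, and verifying that the sign convention $d_h d_v + d_v d_h = 0$ ensures each listed tuple actually lies in $\cS_N^{p,q}$.
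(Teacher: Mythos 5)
The paper states this lemma without proof---it is simply invoked as ``the standard staircase argument''---so there is no argument of the paper to measure yours against; your framework (identifying $\cS_N^{p,q}$ with staircase representatives in the column-filtered total complex and using the subquotient description $E_N^{p,q}=Z_N^{p,q}/(Z_{N-1}^{p+1,q-1}+dZ_{N-1}^{p-N+1,q+N-2})$) is the standard and correct one, and your computation $ds=d_ha_N$ does establish the first assertion. For surjectivity the induction is more roundabout than needed: for any $x=\sum_i x_i\in Z_N^{p,q}$ the first $N$ components already form an element of $\cS_N^{p,q}$ and the tail lies in $F^{p+N}\subset Z_{N-1}^{p+1,q-1}$, so truncation does it in one step.

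The genuine gap is in the uniqueness half, and it sits exactly where you defer the ``bookkeeping.'' Your claim that a type (a) tuple is ``immediately null'' as a ``total coboundary'' is wrong as stated: $d\alpha=d_h\alpha+d_v\alpha$ has the component $d_v\alpha\in A^{p-1,q+1}$, which lies in a \emph{lower} filtration level than $F^p$---it is not discarded by passing to $F^p/F^{p+1}$; rather it prevents $d\alpha$ from lying in $F^p$ at all. Consequently $(d_h\alpha,0,\ldots,0)$ arises from $dZ_{N-1}^{p-N+1,q+N-2}$ only when $\alpha$ extends leftward to a staircase $z$ with top rung $\alpha$ and $dz\in F^p$ (for $N=2$ this forces $d_v\alpha=0$). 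Without that, $[d_h\alpha]$ can survive nonzero: take $A^{0,0}=A^{1,0}=A^{0,1}=k$ with $d_h$ and $d_v$ isomorphisms out of $A^{0,0}$ and all other terms zero; then $(d_h\alpha,0)\in\cS_2^{1,0}$ yet $E_2^{1,0}=k$ is generated by $[d_h\alpha]$. So you must either build the leftward-extendability constraint into type (a) (it holds automatically for the elements your decomposition of $dZ_{N-1}^{p-N+1,q+N-2}$ actually produces, since there $\alpha$ is the top rung of an $(N-1)$-staircase) or note that the paper only applies the lemma where this is satisfied (e.g.\ $p=0$, where $A^{p-1,q}=0$). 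The deferred recombination of the $\cS_{N-1}$-relations on $y$ and $z$ into the three listed types is the substantive content of the uniqueness claim, not a formality, and as written your argument would ``prove'' a too-generous version of it.
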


Applying Lemma \ref{staircase} to  $A^{p,q}=\Hom_B(Q^p,\Hom_A(B,I_q))$, we can represent equivalence classes of elements in $E_N^{p,q}$ by diagrams
 \begin{diagram}
Q^{p+N-1}&\rTo^{\partial_Q^{p+N-1}}&  \cdots &\rTo& Q^{p+2}  &\rTo^{\partial_Q^{p+2}}&Q^{p+1} & \rTo^{\partial_Q^{p+1}} & Q^p \\
\dTo_{a_N}&&\cdots& &\dTo& &\dTo_{a_2} &        & \dTo_{a_1} \\
J_{q-N+1}&\rTo^{\partial^J_{q-N+2}} &\cdots &\rTo & J_{q-2} &\rTo^{\partial^J_{q-1}}&J_{q-1} & \rTo^{\partial^J_{q}} & J_{q}\\
\end{diagram}
 where the squares anticommute and $\partial^J_{q+1}a_1=0$. If $(a_1,a_2,\ldots,a_N)$ and $(b_1,b_2,\ldots,b_N)$ represent the same class in $E_N^{p,q}$, we write $(a_1,a_2,\ldots,a_N)\sim (b_1,b_2,\ldots,b_N)$.

As in Section 6 of \cite{CS}, we can describe the $E(A)-E(B)$ bimodule structure on the %change-of-rings
spectral sequence at the staircase level by composing diagrams. A class $[\z]\in E^k(B)$ is represented by a $B$-linear homomorphism $\z:Q^k\rightarrow k$. Lifting this map through the complex $Q^{\bullet}$ by projectivity, we obtain a commutative diagram of the form
\begin{diagram}
Q^{k+p+N-1}& \rTo&\cdots &\rTo&Q^{k+p}  &\cdots  &\rTo&Q^{k} & & \\
\dTo_{\z_{p+N-1}}&&&&\dTo_{\z_p}&&& \dTo_{\z_0} & \rdTo^{\z}&\\
Q^{p+N-1}&\rTo &\cdots &\rTo &Q^{p} &\cdots  &\rTo&Q^0 & \rTo &k\\
\end{diagram}
Pre-composing with a representative of a class in $E_N^{p,q}$, we obtain
\begin{diagram}
Q^{k+p+N-1}& \rTo&\cdots &\rTo&Q^{k+p+1}&\rTo&Q^{k+p}  \\
\dTo_{\z_{p+N-1}}&&&&\dTo_{\z_{p+1}}&&\dTo_{\z_p}\\
Q^{p+N-1}&\rTo &\cdots&\rTo&Q^{p+1} &\rTo &Q^{p} \\
\dTo_{a_N}&&&&\dTo_{a_2}& &\dTo_{a_1} \\
J_{q-N+1}&\rTo&\cdots&\rTo &J_{q-1}&\rTo & J_{q}  \\
\end{diagram}
If we define $[(a_1,\ldots, a_N)]\star[\z] = [(a_1\z_p,\ldots,a_N\z_{p+N-1})]$ for all $p, q$, and $k\ge 0$, then $\star$ gives $E_N^{*,q}$ a well-defined right $E(B)$-module structure.
For fixed $p, q, k$, we denote the set of all such products by $E_N^{p,q}\star E^k(B)$.
%(Check that the resulting diagram satisfies the conditions of Lemma \ref{staircase}.)

Similarly, a class in $E^m(A)$ is represented by an $A$-linear homomorphism $\g:k\rightarrow I_m$ which can be lifted through the complex $I_{\bullet}$ by injectivity to obtain a commutative diagram
\begin{diagram}
k&\rTo&I^{0}& \rTo&\cdots &\rTo&I_{q-N+1}  &\cdots  &\rTo&I_q  \\
&\rdTo_{\g}&\dTo_{\g_{0}}&&&&\dTo_{\g_{q-N+1}}&&& \dTo_{\g_q} \\
&&I_m&\rTo &\cdots &\rTo &I_{m+q-N+1} &\cdots  &\rTo&I_{m+q} \\
\end{diagram}
%\newpage

Applying the functor $\Hom_A(B,-)$ to the last $N$ terms in this diagram, we obtain the commutative diagram
\begin{diagram}
J_{q-N+1}&\rTo &\cdots&\rTo&J_{q-1} &\rTo &J_{q} \\
\dTo_{\widetilde{\g}_{q-N+1}}&&&&\dTo_{\widetilde{\g}_{q-1}}& &\dTo_{\widetilde{\g}_q} \\
J_{m+q-N+1}&\rTo&\cdots&\rTo &J_{m+q-1}&\rTo & J_{m+q}  \\
\end{diagram}
Post-composing with a representative of $E_N^{p,q}$, we get
\begin{diagram}
Q^{p+N-1}&\rTo &\cdots&\rTo&Q^{p+1} &\rTo &Q^{p} \\
\dTo_{a_N}&&&&\dTo_{a_2}& &\dTo_{a_1} \\
J_{q-N+1}&\rTo&\cdots&\rTo &J_{q-1}&\rTo & J_{q}  \\
\dTo_{\widetilde{\g}_{q-N+1}}&&&&\dTo_{\widetilde{\g}_{q-1}}& &\dTo_{\widetilde{\g}_q} \\
J_{m+q-N+1}&\rTo&\cdots&\rTo &J_{m+q-1}&\rTo & J_{m+q}  \\
\end{diagram}
If we define $[\g]\star[(a_1,\ldots, a_N)]=[(\widetilde{\g}_qa_1,\ldots,\widetilde{\g}_{q-N+1}a_N)]$ for all $p, q$ and $m\ge 0$, then $\star$ gives $E_N^{p,*}$ a well-defined left $E(A)$-module structure. For fixed $p, q, m$, we denote the set of all such products by $E^m(A)\star E_N^{p,q}$. As $E(A)$ always acts on the left of $E_N$ and $E(B)$ always acts on the right, the meaning of the symbol $\star$ should always be clear from context.
%(Check that the resulting diagram satisfies the conditions of Lemma \ref{staircase}.)

 \begin{lemma}
 \label{triviality}
 If %$A$ is a graded ring with (some very nice?) graded involution $^{\circ}$,
 the natural left  $B$-module structure %on $J_q$ is trivial for all $q$. The left action of $B$
  on $\Ext_A(B,k)$ is trivial, there results an isomorphism of bigraded bimodules
 $$\bigoplus_{p,q}\ E_2^{p,q}\ \isom\ \bigoplus_{p,q}\ \Ext_A^q(B,k)\tensor E^p(B)$$
 \end{lemma}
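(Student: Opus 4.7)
The plan is to exploit the triviality hypothesis to compute the $E_2$ page termwise as a tensor product and then verify that the resulting vector-space isomorphism respects both actions. Since the assumption $B_+\cdot \Ext_A^q(B,k)=0$ makes each $\Ext_A^q(B,k)$ a trivial graded left $B$-module, choosing a homogeneous $k$-basis yields an isomorphism of graded $B$-modules $\Ext_A^q(B,k)\cong \bigoplus_j k(-j)^{\oplus m_{q,j}}$, where $m_{q,j}=\dim_k \Ext_A^{q,j}(B,k)$. Applying the additive functor $\Ext_B^p(k,-)$, which commutes with these locally finite direct sums (each $\Ext_B^p(k,k(-j))$ has finite-dimensional internal graded pieces and only finitely many summands contribute to each total bidegree), gives a bigraded $k$-linear isomorphism
$$E_2^{p,q}\;=\;\Ext_B^p(k,\Ext_A^q(B,k))\;\cong\;\bigoplus_j E^p(B)(-j)^{\oplus m_{q,j}}\;\cong\; \Ext_A^q(B,k)\otimes_k E^p(B).$$

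Next I would verify the bimodule compatibility. The right $E(B)$-action on $E_2^{*,q}=\Ext_B^*(k,M)$ is the standard Yoneda right action, natural in the coefficient module $M=\Ext_A^q(B,k)$; under the direct sum decomposition above it transports to the standard right action on the $E^p(B)$ tensor factor alone, which is the claimed right action on $\Ext_A^q(B,k)\otimes_k E^p(B)$. For the left $E(A)$-action, given $[\gamma]\in E^m(A)$, Yoneda product defines a $k$-linear map $\gamma_*\colon \Ext_A^q(B,k)\to \Ext_A^{q+m}(B,k)$ which, by the triviality hypothesis, is automatically $B$-linear; thus applying the functor $\Ext_B^p(k,-)$ to $\gamma_*$ gives the induced left $E(A)$-action on $E_2^{p,*}$, which under the decomposition acts only on the $\Ext_A^q(B,k)$ factor. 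To reconcile this with the staircase definition of the $E(A)$-action as post-composition with a lift $\widetilde{\gamma}_\bullet\colon J_\bullet\to J_{\bullet+m}$, one uses that $\widetilde{\gamma}_\bullet$ is a chain map inducing $\gamma_*$ on $H^q(J_\bullet)=\Ext_A^q(B,k)$, together with the naturality of the Cartan-Eilenberg spectral sequence in the coefficient injective resolution.

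The main obstacle is not conceptual but logistical: the direct sum decomposition of $\Ext_A^q(B,k)$ may be infinite, so one must check carefully that $\Ext_B^p(k,-)$ commutes with it (relying on the locally finite-dimensional hypothesis on all modules in play), and matching the explicit staircase descriptions of the $E(A)$- and $E(B)$-actions to their abstract functorial counterparts requires patient diagram chasing in the spirit of Lemmas 6.1 and 6.2 of \cite{CS}.
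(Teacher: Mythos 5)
Your argument is correct, and it reaches the same conclusion by a genuinely different route. The paper's proof is a one-step construction: it writes down the explicit natural map $[\z]\otimes f\mapsto f_{[\z]}$, $f_{[\z]}(q)=f(q)[\z]$, from $\Ext_A^q(B,k)\otimes\Hom_k(Q^p,k)$ to $\Hom_k(Q^p,\Ext_A^q(B,k))$, checks that triviality of the $B$-action makes it restrict to an isomorphism onto $\Hom_B(Q^p,\Ext_A^q(B,k))$, and then uses minimality of $Q^{\bullet}$ (so that $\im\,\partial_Q^{p+1}\subset B_+Q^p$) to conclude the induced differential vanishes and $\Hom_B(Q^p,-)$ already computes $\Ext_B^p(k,-)$ on these trivial coefficients. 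You instead decompose the trivial module $\Ext_A^q(B,k)$ into shifted copies of $k$ and invoke additivity of $\Ext_B^p(k,-)$; this uses exactly the same three ingredients (triviality, minimality --- hidden in the computation of $\Ext_B^p(k,k(-j))$ --- and local finite-dimensionality to control the infinite sums), but routes the identification through a choice of basis. The paper's version buys a canonical isomorphism for free, which is what makes Remark 5.3 (that the induced $E(A)$--$E(B)$ bimodule structure is the componentwise Yoneda product) immediate; your version has to re-derive canonicity and then do the reconciliation with the staircase description of the actions by hand. It is worth noting that the paper does not actually prove the bimodule compatibility inside this lemma --- it is deferred to Remark 5.3 and to Lemmas 6.1--6.2 of \cite{CS} --- so your more explicit treatment of the two actions is extra work relative to what the paper demands here, but it is not wasted: your observation that any $k$-linear map between trivial $B$-modules is automatically $B$-linear is precisely why the left $E(A)$-action descends, and your caveat about commuting $\Ext_B^p(k,-)$ past a possibly infinite direct sum is a real point that the paper's formulation sidesteps by never decomposing the coefficient module.
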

 
 \begin{proof} 
 Because we work with locally finite-dimensional modules, for every $p$, $q\ge 0$ we have an isomorphism
 $$\Ext_A^q(B,k)\otimes \Hom_k(Q^p,k)\rightarrow \Hom_k(Q^p,\Ext_A^q(B,k))$$ given by $[\z]\otimes f\mapsto f_{[\z]}$ where $f_{[\z]}(q)=f(q)[\z]$. Since $\Ext_A(B,k)$ is a trivial $B$-module, this isomorphism restricts to an isomorphism
  $$\Lambda^{pq}:\Ext_A^q(B,k)\otimes \Hom_B(Q^p,k)\rightarrow \Hom_B(Q^p,\Ext_A^q(B,k))$$ 
 Indeed, if $b\in B_+$, $$f_{[\z]}(bq) = f(bq)[\z]=(bf(q))[\z]=0[\z]=0=b(f_{[\z]}(q))$$ so the restricted map is well-defined. Let $\widetilde{\partial_Q}$ denote the differential induced on the graded vector space $\Hom_B(Q^{\bullet},\Ext_A^q(B,k))$ by $\partial_Q$. By the minimality of $Q^{\bullet}$, $\im\ \partial_Q^{p+1}\subset B_+Q^p$ so $\widetilde{\partial_Q}(f_{[\z]})=f_{[\z]}\partial_Q=0$. Thus we have  $\Ext_B(Q^p,\Ext_A^q(B,k))=\Hom_B(Q^p,\Ext_A^q(B,k))$  and $\Lambda=\bigoplus \Lambda^{pq}$ yields the desired isomorphism.
 
 \end{proof}
 
%(At some point it needs to be mentioned that we are going to restrict to the case where $E^p(B)$ is finite dimensional. This is implied by the hypotheses $A$ Koszul and $_A\I$ $\K_2$. Once we've done that, we can make the additional assumption that homogeneous bases are chosen for $Q$'s such that $L(f_i)$ having dependent rows implies $L(f_i)$ has a row of zeroes.)
 
 \begin{rmk}
 \label{remark}
 It is important to note that the induced $E(A)-E(B)$ bimodule structure on $\Ext_A(B,k)\otimes E(B)$ that results from the isomorphism of Lemma \ref{triviality} is precisely the structure determined by the usual Yoneda product on each tensor component. Our use of the notation $\star$ for the spectral sequence module structure is therefore consistent with its prior use to indicate the Yoneda products on $\Ext_A(B,k)$ and $E(B)$. 
 \end{rmk}

We also note that the hypothesis that $\Ext_A(B,k)$ is a trivial left $B$-module is satisfied in many important cases, including when $A$ is commutative or graded-commutative. 
%By choosing homogeneous $B$-bases $\{e_i^p\}$ for each $Q^p$, we can make this isomorphism explicit.
%If $f$ represents a class in $(E_2^{p,q})_d$, then $f$ is supported on only finitely many $e_i^p$. We define a map $(E_2^{p,q})_d\rightarrow  (\Ext_A^{q}(B,k)\otimes \Ext_B^{p}(k,k))_d$ by $[f]\mapsto \sum_i [f(e^p_i)]\tensor (e^p_i)^*$ where $(e^p_i)^*$ is the dual basis element in $(Q^p)^*$ corresponding to $e^p_i$.
%
%Conversely, $\sum [\phi]\otimes (e_i^p)^*$ maps to $(f,g)$ where $f(e^p_i)=\phi$ and $g$ is determined by projectivity of $Q^{p+1}$.

%Let $\gamma:{_Ak}\rightarrow I_1$ represent a class in $E^1(A)$. By the injectivity of the resolution $I_{\bullet}$, $\gamma$ lifts to $A$-module maps $\gamma_j:I_j\rightarrow I_{j+1}$. In particular, $\gamma_0(\epsilon)=\gamma(1)$. 
%The $\gamma_j$ then induce  $B$-module homomorphisms which we denote $\widetilde{\gamma}_j:J_j\rightarrow J_{j+1}$.
%Then $\gamma$ and $\gamma'$ respectively induce  $B$-linear homomorphisms $\widetilde{\g}:\Hom_A(B,k)\rightarrow J_1$ and $\widetilde{\gamma}_0:J_0\rightarrow J_1$.
 
 We prove two lemmas relating to representatives in $\cS_N^{p,q}$. %We remark that these lemmas require only our assumptions about the form of $\partial_Q^1$ and do not in any way depend on the properties of $A$ or $B$.
 
 %Done
 \begin{lemma}
 \label{lifting}
Let $[\gamma]\in E^1(A)$ and let  $\widetilde{\gamma}_0:J_0\rightarrow J_1$ be induced by $\gamma$. 
 Let $\phi:Q^p\rightarrow J_0$ be a homomorphism of $B$-modules. If the composition $Q^p\xrightarrow{\phi} J_0\xrightarrow{\partial^J_1} J_1$ is zero, then $\phi$ factors through $_Bk$ and $\im\ \widetilde{\g}_0\phi\subset \soc(J_1).$ 
 
% is generated as a $B$-module by $A$-linear maps $f:B\rightarrow (P^1)^*$ such that $f(1)\in\text{span}_k\{ \varepsilon_1^*,\ldots,\varepsilon_n^*\}$. 
 
 %then there exists a $B$-module homomorphism $g:Q^p\rightarrow J_0$ such that $\partial^J_0 g=\widetilde{\gamma}\phi$ and the image of $g$ is determined by $k$-linear forms on $A$.
 \end{lemma}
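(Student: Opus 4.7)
The plan is to locate the image of $\phi$ inside $\ker \partial^J_1$, identify that kernel as the trivial $B$-module $k$ sitting inside $J_0$ via the coaugmentation, and then read off both conclusions from the defining property of the lift $\gamma_0$.

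First I would observe that since $-^*$ is exact and $I_\bullet$ resolves ${}_Ak$, the sequence $0 \to k \xrightarrow{\epsilon^\vee} I_0 \xrightarrow{\partial^I_1} I_1$ is exact, so $\ker \partial^I_1 = \im \epsilon^\vee$. Applying the left exact functor $\Hom_A(B,-)$ then gives $\ker \partial^J_1 \isom \Hom_A(B, k)$, embedded in $J_0$ by post-composition with $\epsilon^\vee$. Any graded $A$-linear map $B \to k$ annihilates $B_+$ and so factors through the canonical projection $\pi: B \to B/B_+ = k$; this identifies $\Hom_A(B, k)$ with $k$ and makes the induced left $B$-action on it trivial (for $b \in B_+$ and $f = \lambda \epsilon^\vee \pi$, we have $(b \cdot f)(c) = \lambda \epsilon^\vee \pi(cb) = 0$). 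Since $\partial^J_1 \phi = 0$ by hypothesis, $\im \phi$ sits inside this copy of ${}_Bk$, and by $B$-linearity the corestricted map $\bar\phi: Q^p \to k$ is a $B$-module homomorphism, giving the desired factorization $\phi = \iota \circ \bar\phi$ with $\iota: k \hookrightarrow J_0$.

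For the socle statement, I would use the lifting condition $\gamma_0 \circ \epsilon^\vee = \gamma$. If $f = \epsilon^\vee \circ g \in \ker \partial^J_1$ with $g: B \to k$ factoring through $\pi$, then $\widetilde{\gamma}_0(f) = \gamma_0 \circ f = \gamma \circ g$, which again factors through $k$. Any graded map $B \to I_1$ that factors through $B/B_+$ is killed by the left $B_+$-action on $J_1$ (for $b\in B_+$ and $c\in B$, the pairing $(b\cdot(\gamma g))(c)=\gamma(g(cb))$ vanishes because $cb\in B_+$), and hence lies in $\soc(J_1)$. Applying this pointwise to $\im \phi \subset \ker \partial^J_1$ yields $\im \widetilde{\gamma}_0 \phi \subset \soc(J_1)$.

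The argument is not deep; the thing to be careful about is juggling the several identifications in play at once (the kernel of $\partial^J_1$ with $\Hom_A(B,k)$, the embedding of the latter into $J_0$ via $\epsilon^\vee$, and the compatibility $\gamma_0 \epsilon^\vee = \gamma$ characterizing the lift). Once those are lined up, both conclusions reduce to the single observation that factoring through ${}_Bk$ forces $B_+$-annihilation.
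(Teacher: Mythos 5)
Your proof is correct and follows essentially the same route as the paper's: identify $\ker\partial^J_1$ with the image of ${}_Bk$ under the map induced by $\epsilon^{\vee}$ (via left-exactness of $\Hom_A(B,-)$ applied to $0\to k\to I_0\to I_1$), deduce the factorization, and then use the lifting relation $\gamma_0\epsilon^{\vee}=\gamma$ for the socle claim. The only cosmetic difference is that you verify $B_+$-annihilation in $J_1$ directly, where the paper invokes $\im\,\gamma\subset\soc(I_1)$ together with its earlier criterion that $g\in\soc(J_p)$ iff $g(1)\in\soc(I_p)$; these are equivalent.
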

 
 \begin{proof}
 If $\phi\partial_1^J=0$, then $\im\ \phi \subset \ker\ \partial_1^J=\im\ \widetilde{\e^{\vee}}$ where $\widetilde{\e^{\vee}}:{_Bk}\rightarrow J_0$ is induced by $\e^{\vee}$. Thus $\phi$ factors through $_Bk$ and $\im\ \phi\subset \soc(J_0)$. Since $\im\ \g_0\e^{\vee}=\im\ \g\subset \soc(I_1)$,  we have $\im\ \widetilde{\g_0}\phi\subset \soc(J_1)$.
 
% Let $e$ be a homogeneous basis element of $Q^p$. Let $\widetilde{\e^{\vee}}$ be the $B$-module homomorphism $\Hom_A(B,k)\rightarrow J_0$ obtained by applying $\Hom_A(B,-)$ to the coaugmentation $\e^{\vee}$. If $\partial^J_0\phi(e)=0$ then $\phi(e)\in \im\ \widetilde{\e^{\vee}}$ because $\Hom_A(B-)$ is left exact. Since $e$ was arbitrary, $\im\ \phi\subset \im\ \widetilde{\e^{\vee}}$. Since $Q^p$ is projective, there exists $\widetilde{\phi}$ such that $\phi=\widetilde{\e^{\vee}}\widetilde{\phi}$. Since $\Hom_A(B,k)= {_Bk}$, we have shown that $\phi$ factors through $_Bk$.
% Let $\alpha\in k$ such that $\widetilde{\phi}(e)=\alpha$. Then $\widetilde{\g}_0\phi(e)=\widetilde{\g}_0\widetilde{\e^{\vee}}\widetilde{\phi}(e)=\widetilde{\g}(\a)=\g\a.$ We complete the proof by showing that $\g(1)\in\text{span}_k\{ \varepsilon_1^*,\ldots,\varepsilon_n^*\}$. If $a\in A_+$, then $a\g(1)=\g(a)=\g(0)=0$ since $\g$ is $A$-linear. Then for any $x\in P^1$, we have $0=(a\g(1))(x)=\g(1)(a^{\circ}x)$. Thus $\g(1)$ vanishes on $P^1_+$.

 \end{proof}
 
 Recall that we fixed an $A$-basis for $P^1$ such that $P^1\rightarrow P^0$ is given by left multiplication by $(x_1\ x_2\ \cdots\ x_n)$. Denote the elements of this $A$-basis by $\varepsilon_1,\ldots,\varepsilon_n$. Denote the $k$-linear graded duals of these basis elements by $\varepsilon_1^*,\ldots,\varepsilon_n^*$. Then $\soc(I_1)$ is the trivial $A$-module generated by $\{  \varepsilon_1^*,\ldots,\varepsilon_n^*\}$.

 \begin{lemma}
 \label{lifting2}
 Let $[\g]\in E^1(A)$ and let $(f,g)\in\cS_2^{p,1}$. Let $\widetilde{\g_0}:J_0\rightarrow J_1$ and $\widetilde{\g_1}:J_1\rightarrow J_2$ be induced by $\g$. Then there exists $h\in A^{p+2,0}$ such that $(\widetilde{\g}_1f,\widetilde{\g}_0g, h)\in \cS_3^{p,2}$. Furthermore, $h$ can be chosen such that, for any vector $e$ in a $B$-basis for $Q^{p+2}$, $h(e)(1)\in I_0^{A_{\ge 2}}$.
 \end{lemma}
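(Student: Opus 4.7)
The plan is to observe that the three staircase conditions for $(\widetilde{\g}_1 f, \widetilde{\g}_0 g, h)$ to lie in $\cS_3^{p,2}$ reduce, via the commuting squares for the injective lift $\widetilde{\g}_\bullet$ and the cocycle relations on $(f,g)$, to the single equation $\partial^J_1 h = -\widetilde{\g}_0 g \partial_Q^{p+2}$. Setting $\phi = g \partial_Q^{p+2} : Q^{p+2} \to J_0$, I will construct $h$ one $B$-basis vector at a time as a preimage of $-\widetilde{\g}_0 \phi$ under $\partial^J_1$.

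First I apply Lemma \ref{lifting} to $\phi$: its hypothesis $\partial^J_1 \phi = 0$ holds since $\partial^J_1 \phi = (\partial^J_1 g) \partial_Q^{p+2} = -f \partial_Q^{p+1} \partial_Q^{p+2} = 0$, using the cocycle condition on $(f,g)$ and the fact that $Q^{\bullet}$ is a complex. Hence $\im\ \widetilde{\g}_0 \phi \subset \soc(J_1)$. Fix a homogeneous $B$-basis $\{e_\alpha\}$ of $Q^{p+2}$; for each $\alpha$, the value $\widetilde{\g}_0 \phi(e_\alpha)(1)$ lies in $\soc(I_1) \subset \im \partial^I_1$, so I choose a homogeneous $y_\alpha \in I_0$ with $\partial^I_1 y_\alpha = -\widetilde{\g}_0 \phi(e_\alpha)(1)$ and tentatively define $h(e_\alpha): B \to I_0$ by $\bar a \mapsto a y_\alpha$, extending $B$-linearly to a candidate $h \in A^{p+2,0}$.

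The technical heart of the argument is to verify that $\bar a \mapsto a y_\alpha$ is a well-defined map out of $B = A/\I$, i.e.\ that $\I y_\alpha = 0$. In fact, the stronger vanishing $A_{\ge 2} y_\alpha = 0$ holds: since $\partial^I_1 y_\alpha \in \soc(I_1)$ is annihilated by $A_+$, we obtain $\partial^I_1(A_+ y_\alpha) = 0$, so $A_+ y_\alpha \subset \ker \partial^I_1 = \im \epsilon^{\vee}$, which is the trivial $A$-module ${_A}k$. Thus $A_+ \cdot A_+ y_\alpha = 0$, and since $A$ is generated in degree $1$ we have $A_{\ge 2} = (A_+)^2$, giving $A_{\ge 2} y_\alpha = 0$. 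This yields both $\I y_\alpha = 0$ (as $\I \subset A_{\ge 2}$) and the claimed refinement $h(e_\alpha)(1) = y_\alpha \in I_0^{A_{\ge 2}}$.

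Finally, the required equation $\partial^J_1 h = -\widetilde{\g}_0 \phi$ holds because the two $A$-linear maps $B \to I_1$ agree at the cyclic generator $1$ by construction; the remaining two staircase conditions reduce to $\widetilde{\g}_2 \partial^J_2 f = 0$ and $\widetilde{\g}_1 (f \partial_Q^{p+1} + \partial^J_1 g) = 0$, both immediate from the given hypotheses. The principal obstacle --- showing the chosen preimage descends to a map out of $B$ --- is thus dissolved by the socle calculation above, which simultaneously produces the promised strengthening about $h(e)(1)$.
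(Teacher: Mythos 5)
Your proof is correct and follows essentially the same route as the paper: reduce to solving $d_vh=-d_h(\widetilde{\g}_0 g)$, use Lemma \ref{lifting} to place the obstruction in $\soc(J_1)\subset\im\,\partial_1^I$, lift basis-vector-by-basis-vector through $\partial_1^I$, and check the lift lies in $I_0^{A_{\ge 2}}$ so that it descends to a map out of $B$. The only (harmless) variations are that you derive $A_{\ge 2}y_\alpha=0$ from $\ker\partial_1^I=\im\,\epsilon^{\vee}\isom k$ rather than from the explicit matrix of $\partial_P^1$, and you track the sign in the staircase condition slightly more carefully than the paper does.
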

 
 \begin{proof}
 Let $\phi=d_hg=g\partial_Q^{p+2}$. 
 We have $\partial_1^J\phi=d_vd_hg=-d_hd_vg=d_h^2f=0$. 
 %By Lemma \ref{lifting}, $\phi$ factors through $_Bk$ as $\phi=\widetilde{\e^{\vee}}\widetilde{\phi}$. 
 %Let $\{e_i\}$ be a $B$-basis for $Q^{p+2}$ and let $\a_i=\widetilde{\phi}(e_i)$. 
 It suffices to define $h$ on an arbitrary element $e$ of a $B$-basis for $Q^{p+2}$.
 By Lemma \ref{lifting}, $\widetilde{\g_0}\phi(e)\in\soc(J_1)$ so $\widetilde{\g_0}\phi(e)(1)\in\soc(I_1)\subset \im\ \partial_1^I$. Let $u\in I_0$ such that $\partial_1^I(u)=\widetilde{\g_0}\phi(e)(1)$. Since $\partial_1^I(u)=u\partial_P^1$ is annihilated by $A_+$ and since $\partial_P^1$ is multiplication by $(x_1, \ldots, x_n)$, we can choose $u\in I_0^{A_{\ge 2}}$. We define an $A$-module homomorphism $h(e):{_AB}\rightarrow I_0$ by $h(e)(1)=u$. This is well-defined since $\I\subset A_{\ge 2}$. Finally, we have
 %Thus there exist unique scalars $\b_{j}$ such that $\widetilde{\g}_0\phi(e)(1)=\sum_j \b_{j}\varepsilon_j^*$.
 %We define an $A$-module map $h(e):{_AB}\rightarrow I_0$ by  $h(e)(1)=-\sum_j \b_{j} x_j^*$, where $x_j^*$ is the dual basis vector in $A^*$ corresponding to $x_j$. The homomorphism $h(e)$ is well defined since $\I$ is generated in homogeneous degrees $\ge 2$.
 $$(d_vh)(e)=\partial_1^J(h(e))=\partial_1^Ih(e)=\widetilde{\g_0}\phi(e)=\widetilde{\g_0}d_hg(e)=d_h(\widetilde{\g_0}g)(e)$$
so $(\widetilde{\g}_1f,\widetilde{\g}_0g, h)\in \cS_3^{p,2}$.
 
 %We show that $(\widetilde{\g}_1f,\widetilde{\g}_0g, h)\in \cS_3^{p,2}$. Clearly $(\widetilde{\g}_1f,\widetilde{\g}_0g)\in \cS_2^{p,2}$, so it suffices to show that $d_h(\widetilde{\g}_0g)+d_vh=0$. Evaluating at $e$,  we have $(d_vh)(e_i)=\partial_0^J(h(e_i))=\partial_0^Ih(e_i)$ and $d_h(\widetilde{\g}_0g)(e_i)=\widetilde{\g}_0\phi(e_i).$ Finally, since $\partial_P^1(\varepsilon_j)=x_j$, we have 
 %\begin{align*}
 %\partial_0^Ih(e_i)(1)&=h(e_i)(1)\partial_P^1=-\a_i\sum_j \b_{ij}x_j^*\partial_P^1\\
 %&=-\a_i\sum_j \b_{ij}\varepsilon_j^*=-\widetilde{\g}_0\phi(e_i)(1)
 %\end{align*}
 
 \end{proof}

% $\phi(e)(1)=\alpha\epsilon$ and define $g(e)(1)=\alpha {x_i}^*$ where $x_i^*$ is the dual basis element in $V^*$ corresponding to $x_i$.  Then $\partial^J_0g(e):B\rightarrow I_1$ is given by $(\partial^J_0  g(e))(1)=\partial^I_0\circ g(e)(1)=\alpha {x_i}^*\circ \partial_P^1=\alpha\varepsilon_i^*$. 
% On the other hand, $\widetilde{\gamma}(\phi(e))=\gamma'\circ \phi(e)$ is determined by 
% $\gamma'(\phi(e)(1))=\gamma'(\alpha\epsilon)=\alpha\gamma(1)=\alpha\epsilon_i^*.$ 
% Extending by $B$-linearity, we have defined $g$ and shown $\partial^J_0 g=\widetilde{\gamma}\phi$. The last statement then follows from the fact that $B$ acts trivially on $J_0$.

%We remark that though the proof of Lemma \ref{lifting} employed a choice of basis for $Q^p$, the conclusion that the image of $g$ is determined by linear forms is independent of this choice.

%For any connected graded $k$-algebra $A$, we let $D(A)$ denote the subalgebra of $E(A)$ generated by $E^1(A)$ and $E^2(A)$. If $A$ is a quadratic algebra, $D(A)$ is the diagonal subalgebra of $E(A)$ generated by $E^1(A)$. Furthermore, if $A$ is a Koszul algebra or a $\K_2$ algebra, then $D(A)=E(A)$.

%Done

\begin{lemma}
\label{E2page}
Let $A$ be a graded $k$-algebra. Let $\I$ be a graded ideal of $A$ and let $B=A/\I$. Assume $B$ acts trivially on $\Ext_A(B,k)$.
If $\Ext_A(\I,k)$ is generated as a left $E(A)$-module by $\Ext_A^0(\I,k)$, then for any $p$ and for $q\ge 2$, the spectral sequence differential $d_2^{p,q}$ is zero.  
 \end{lemma}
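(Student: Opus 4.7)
The plan is to show that every class in $E_2^{p,q}$ for $q \geq 2$ admits a representative in $\cS_3^{p,q}$, which forces $d_2$ to vanish on it. I first exploit the hypothesis via the long exact sequence of $\Ext_A(-,k)$ applied to $0 \to \I \to A \to B \to 0$. Because $\Ext_A^{\geq 1}(A,k) = 0$, the connecting map $\delta \colon \Ext_A^{q-1}(\I,k) \to \Ext_A^q(B,k)$ is an isomorphism for $q \geq 2$, and it is $E(A)$-linear. The hypothesis $\Ext_A(\I,k) = E(A) \star \Ext_A^0(\I,k)$ thus translates, via $\delta$, into $\Ext_A^q(B,k) = E^{q-1}(A) \star \Ext_A^1(B,k)$ for $q \geq 2$, with $\Ext_A^1(B,k) = \delta(\Ext_A^0(\I,k))$. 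Combined with Lemma \ref{triviality} and Remark \ref{remark}, every class in $E_2^{p,q}$ then decomposes as a sum of products $[\g] \star x$ with $[\g] \in E^{q-1}(A)$ and $x \in E_2^{p,1}$ of a specific form coming from $\delta$.

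Next I would represent $x = \delta[\b] \otimes [\s]$ by a pair $(f,g) \in \cS_2^{p,1}$ in which $f$ takes the explicit ``boundary form'' arising from the connecting homomorphism: concretely, $f$ is built from $\partial^I \tilde{\b}$, where $\tilde{\b} \colon A \to I_0$ is an $A$-linear lift of $\b \colon \I \to k$. Applying the $E(A)$-action of $[\g]$ produces $(\widetilde{\g}_1 f, \widetilde{\g}_0 g) \in \cS_2^{p,q}$, a representative of $[\g] \star x$. Following the template of Lemma \ref{lifting2}, I would produce $h \in A^{p+2,q-2}$ such that $(\widetilde{\g}_1 f, \widetilde{\g}_0 g, h) \in \cS_3^{p,q}$. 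For each basis vector $e$ of $Q^{p+2}$, Lemma \ref{lifting} gives $\widetilde{\g}_0 g(\partial_Q^{p+2} e)(1) \in \soc(I_{q-1})$; since $\soc(I_{q-1}) \subseteq \im \partial^I_{q-1}$, one can find a preimage in $I_{q-2}$, and the point is that this preimage can be chosen inside $J_{q-2} = I_{q-2}^{\I}$, so that $h(e)$ is a well-defined element of $J_{q-2}$.

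The main obstacle is producing this $\I$-annihilated preimage when $q \geq 3$. For $q = 2$ the construction is exactly the content of Lemma \ref{lifting2}, which uses the special form of $\partial_P^1$ (multiplication by linear forms) to arrange the preimage in $I_0^{A_{\geq 2}} \subseteq I_0^{\I}$. For $q \geq 3$ no analogous linear-matrix argument is available, so the construction must exploit both the boundary form of $(f,g)$ inherited from $\delta$ and the trivial $B$-action hypothesis on $\Ext_A(B,k)$. I would expect the resolution of this obstacle to involve expanding the relevant obstruction element as $\b(r)\g(1) \in \soc(I_{q-1})$ for a specific $r \in \I$ built from the lifted differentials $\tilde{\partial}_Q^{p+1}\tilde{\partial}_Q^{p+2}$ of the minimal $B$-resolution, and showing that it vanishes --- either because $\b(r) = 0$ (via a degree and minimality argument placing $r$ in $A_+ \cdot \I$) or because the class of $\g(1)$ in $\Ext_A^{q-1}(B,k)$ is forced to vanish for elements of this specific form under the trivial $B$-action hypothesis.
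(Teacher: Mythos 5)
Your opening moves match the paper's: the connecting isomorphism $\Ext_A^{q-1}(\I,k)\cong\Ext_A^q(B,k)$ for $q\ge 2$ turns the hypothesis into $\Ext_A^q(B,k)=E^{q-1}(A)\star\Ext_A^1(B,k)$, and via Lemma \ref{triviality} and Remark \ref{remark} this yields $E_2^{p,q}=E^{q-1}(A)\star E_2^{0,1}\star E^p(B)$. But from there you take a strictly harder route than necessary, and the hard part is exactly where the argument breaks down. To conclude $d_2^{p,q}=0$ you do not need to exhibit $\cS_3^{p,q}$-representatives, i.e.\ a vertical primitive $h$ with $d_vh=-d_h(\widetilde{\g}_0g)$; you only need the image of $d_2$ to vanish. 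Since $d_2$ respects the bimodule structure, it suffices to show $E^{q-1}(A)\star\im\ d_2^{0,1}=0$. Given $(a_1,a_2)\in\cS_2^{0,1}$ and $[\g]\in E^{q-1}(A)$, the class $[\g]\star d_2^{0,1}[a_1]=[\g]\star[d_ha_2]$ is represented by $\widetilde{\g}_0(a_2\partial_Q^2)=d_h(\widetilde{\g}_0a_2)$ with $\widetilde{\g}_0a_2\in A^{1,q-1}$; one checks directly that $(d_h(\widetilde{\g}_0a_2),0)\in\cS_2^{2,q-1}$, and by Lemma \ref{staircase} an element of the form $(d_h\alpha,0)$ represents $0$ in $E_2^{2,q-1}$. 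That is the whole proof: the obstruction you are trying to kill vertically is already a \emph{horizontal} coboundary, which is enough on the $E_2$ page.

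The genuine gap in your version is that the construction of $h$ for $q\ge 3$ is never carried out, only conjectured. The mechanism of Lemma \ref{lifting2} really does depend on $P^0=A$, $P^1=A(-1)^{\oplus n}$ and $\partial_P^1$ being multiplication by linear forms in order to move the preimage $u$ into $I_0^{A_{\ge 2}}\subset I_0^{\I}$; for a preimage in $I_{q-2}$ with $q\ge 3$ there is no reason the chosen $u$ is annihilated by $\I$, i.e.\ defines an element of $J_{q-2}=\Hom_A(B,I_{q-2})$, and neither of your two suggested fixes is substantiated: the socle class $\g(1)$ is generically nonzero, and $\b(r)=0$ is not forced by minimality or degree considerations. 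Indeed, the paper only ever produces such ``nice'' staircase lifts landing in the bottom corner $J_0$ (Lemmas \ref{isom} and \ref{ENpage}), and even there it needs the additional hypothesis that $_A\I$ is a $\K_1$ module, which is not assumed in Lemma \ref{E2page}. As written, the proposal therefore does not prove the lemma; the repair is to abandon the search for a vertical primitive and use the horizontal-coboundary observation above.
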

 
 \begin{proof}
% Let $D(A)=D_2(A)$.
%Since $_A\I$ is a $\K_2$ $A$-module, $\Ext_A(\I,k)$ is generated as a left $D(A)$-module by $\Ext_A^0(\I,k)$. It follows 
If $\Ext_A(\I,k)$ is $E(A)$-generated by $\Ext_A^0(\I,k)$, $\bigoplus_{q>0}\Ext_A^q(B,k)$ is generated as a left $E(A)$-module by $\Ext_A^1(B,k)$.
By Lemma \ref{triviality}, we have $E_2^{p,q}\cong \Ext_A^q(B,k)\tensor E^p(B)$. By Remark \ref{remark}, we have  $E_2^{p,q}=E^{q-1}(A)\star E_2^{0,1}\star E^p(B)$ for $q\ge 1$. Thus any element of $E_2^{p,q}$ can be represented as a sum of diagrams of the form 
\newpage

\begin{diagram}
Q^{p+1} & \rTo^{\partial_Q^{p+1}} & Q^p \\
\dTo      &          & \dTo \\
Q^1 & \rTo^{\partial_Q^1} & Q^0 \\
\dTo_{a_2} &        & \dTo_{a_1} \\
J_{0} & \rTo^{\partial^J_1} & J_{1}\\
\dTo &            & \dTo \\
J_{q-1} & \rTo^{\partial^J_{q}} & J_q\\
\end{diagram}
which represent Yoneda products in $E^{q-1}(A)\star E_2^{0,1}\star E^p(B)$.
 
Since the spectral sequence differential respects the bimodule structure, it suffices to show $E^{q-1}(A)\star \im(d_2^{0,1})=0$.
Given a pair $(a_1,a_2)\in \cS_2^{0,1}$ representing a class in $E_2^{0,1}$, we have $d_2^{0,1}[a_1]=[d_ha_2]$. Let $\phi=a_2\partial_Q^2$ and $[\gamma]\in E^{q-1}(A)$. 
Since $d_vd_h(\widetilde{\g}_0a_2)=-d_h(\partial_q^J\widetilde{\g}_0a_2)=-d_h(\widetilde{\g}_1d_va_2)=d_h(\widetilde{\g}_1d_ha_1)=\widetilde{\g}_1a_1\partial_Q^1\partial_Q^2=0$,  we have $(\widetilde{\g}_0\phi,0)\in\cS_2^{2,q-1}$.
But $\widetilde{\g}_0\phi=d_h(\widetilde{\g}_0a_2)$ and $\widetilde{\g}_0a_2\in A^{1,q-1}$.  By Lemma \ref{staircase}, $(\widetilde{\g}_0\phi,0)$ represents $0$ in $E_2^{2,q-1}$. Hence $[\gamma]\star[\phi]=0$ for all $[\gamma]\in E^{q-1}(A)$ and all $[\phi]\in \im\ d_2^{0,1}$, so $d_2^{p,q}=0$.

%By Lemma \ref{lifting}, there exists $g:Q^2\rightarrow J_0$ such that the diagram below commutes. The class $[\gamma]\star[\phi]\in E_2^{2,1}$ is represented by $(\widetilde{\gamma}_0\phi,0)=(d_vg,d_hg)-(0,d_hg)$. The pair $(d_vg, d_hg)$ clearly represents zero in $E_2^{2,1}$ by Lemma \ref{staircase}. To see that $(0,d_hg)$ represents zero, we must show that  $d_hg$ is a cocycle for $d_v$. Indeed, we have $-d_vd_hg=d_hd_vg=d_vg\partial^3_Q=\widetilde{\gamma}_0a_2\partial^2_Q\partial^3_Q=0$. Hence $[\gamma]\star[\phi]=0$ for all $[\gamma]\in D^1(A)$ and all $[\phi]\in \im\ d_2^{0,1}$, so $d_2^{p,q}=0$.
%\begin{diagram}
%Q^2 &\rTo^{\partial_Q^2}&Q^1 & \rTo^{\partial_Q^1} & Q^0 \\
%\dTo(0,2)^g&\rdTo^{\phi} &\dTo_{a_2} &        & \dTo_{a_1} \\
%&&J_{0} & \rTo^{\partial^J_0} & J_{1}\\
%&&\dTo_{\widetilde{\gamma}_0} &            & \dTo_{\widetilde{\gamma}_1} \\
%J_{0}&\rTo^{\partial^J_0}&J_{1} & \rTo^{\partial^J_1} & J_2\\
%\end{diagram}
\end{proof}

Though the differential $d_2^{p,q}$ vanishes for $q\ge 2$, the differential $d_2^{0,1}$ is likely nonzero. Thus the decomposition $E_2^{p,q}=E^{q-1}(A)\star E_2^{0,1}\star E^p(B)$ does not immediately translate to the $E_3$ page. However, if $_A\I$ is a $\K_1$ $A$-module, we obtain a similar result. Recall that $D_1(A)$ is the subalgebra of $E(A)$ generated by $E_1(A)$.

%Done
\begin{lemma}
\label{E2=E3} 
Let $A$ be a graded $k$-algebra and  $\I$ a graded ideal such that $_A\I$ is a $\K_1$ $A$-module and $B=A/\I$ acts trivially on $\Ext_A(B,k)$. Then for any $p$ and for $q\ge 2$, we have $E_2^{p,q}= E_3^{p,q}$ and $E_3^{p,q}=D_1^{q-2}(A)\star E_3^{0,2}\star E^p(B)$.
\end{lemma}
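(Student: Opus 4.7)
The plan is to use Lemma \ref{E2page} to kill all the relevant $d_2$-differentials, then transport the $E(A)$-module generation of $\Ext_A(B,k)$ (coming from the $\K_1$ hypothesis on $\I$) onto the $E_2$-page.

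First, since $\I$ is a $\K_1$ $A$-module, $\Ext_A(\I,k) = D_1(A) \star \Ext_A^0(\I,k)$, so in particular $\Ext_A(\I,k)$ is generated as an $E(A)$-module by $\Ext_A^0(\I,k)$. Thus the hypothesis of Lemma \ref{E2page} is satisfied, and $d_2^{p,q} = 0$ for every $p$ whenever $q \geq 2$. For $q \geq 2$ both the outgoing differential $d_2^{p,q}$ and the incoming differential $d_2^{p-2,q+1}$ (whose source has $q$-coordinate $q+1 \geq 3 \geq 2$) vanish, so $E_3^{p,q} = E_2^{p,q}$ for all $p$ and all $q \geq 2$. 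In particular $E_3^{0,2} = E_2^{0,2}$, since the potential incoming term $E_2^{-2,3}$ does not exist in the first quadrant.

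Next I would apply the long exact sequence of $\Ext_A(-,k)$ to $0 \to \I \to A \to B \to 0$. Because $A$ is free over itself, $\Ext_A^{\geq 1}(A,k) = 0$, and because $\I \subseteq A_+$, the map $\Hom_A(A,k) \to \Hom_A(\I,k)$ is zero. So the connecting homomorphisms give $E(A)$-linear isomorphisms $\delta^q : \Ext_A^{q-1}(\I,k) \xrightarrow{\isom} \Ext_A^q(B,k)$ for all $q \geq 1$, the $E(A)$-linearity being just associativity of Yoneda splicing with the class of the short exact sequence in $\Ext_A^1(B,\I)$. The $\K_1$ hypothesis then yields $\Ext_A^q(B,k) = D_1^{q-1}(A) \star \Ext_A^1(B,k)$ for $q \geq 1$. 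Specializing at $q = 2$ gives $\Ext_A^2(B,k) = E^1(A) \star \Ext_A^1(B,k)$, and consequently for all $q \geq 2$, $\Ext_A^q(B,k) = D_1^{q-2}(A) \star \Ext_A^2(B,k)$.

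Finally, by Lemma \ref{triviality} and Remark \ref{remark}, $E_2^{p,q} \isom \Ext_A^q(B,k) \otimes E^p(B)$ as bimodules with the $E(A)$- and $E(B)$-actions coming from the Yoneda product on each tensor factor. Combining this with the previous paragraph, $E_2^{p,q} = D_1^{q-2}(A) \star E_2^{0,2} \star E^p(B)$, and passing to $E_3$ via the first paragraph establishes the lemma. The main obstacle is the rigorous verification of $E(A)$-equivariance of $\delta^q$; once that identification of the connecting map as right-Yoneda multiplication by the extension class is in place, the rest is formal bookkeeping with tensor products.
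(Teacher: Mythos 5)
Your proposal is correct and follows essentially the same route as the paper: Lemma \ref{E2page} kills $d_2^{p,q}$ and $d_2^{p-2,q+1}$ for $q\ge 2$ to give $E_2^{p,q}=E_3^{p,q}$, and the $\K_1$ hypothesis on $_A\I$, transported through the dimension shift $\Ext_A^{q-1}(\I,k)\isom\Ext_A^q(B,k)$ and the isomorphism of Lemma \ref{triviality}, yields the generation statement. The only difference is that you spell out the $E(A)$-equivariance of the connecting homomorphism, which the paper leaves implicit (it is already used in the proof of Lemma \ref{E2page}).
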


\begin{proof}
By Lemma \ref{E2page}, $d_2^{p,q}$ and $d_2^{p-2,q+1}$ are both zero for $q\ge 2$, so $E_2^{p,q}=E_3^{p,q}$ for any $p$ and for $q\ge 2$. Since $_A\I$ is a $\K_1$ module, $\Ext_A^q(\I,k)=E^1(A)\star\Ext_A^{q-1}(\I,k)$ for all $q>0$. Thus by induction, 
$$E_3^{p,q}=E_2^{p,q}=D_1^{q-2}(A)\star E_2^{0,2}\star E^p(B)=D_1^{q-2}(A)\star E_3^{0,2}\star E^p(B).$$
\end{proof}

The following technical lemma and its analog for the $E_N$ page (see Lemma \ref{ENpage}) show that if $_A\I$ is a $\K_1$ $A$-module, the staircase representations of Lemma \ref{staircase} have a particularly nice form.

%Done
\begin{lemma}
\label{isom}
%Let $A$ be a graded $k$-algebra and let $\I$ be an ideal of $A$ such that $_A\I$ is a $\K_1$ $A$-module and $B=A/\I$ acts trivially on $\Ext_A(B,k)$. 
Let $A$ and $B$ be graded $k$-algebras as in Lemma \ref{E2=E3}.
Then every class in $E_3^{p,q}$ can be represented as a sum of diagrams of the form
\begin{diagram}
Q^{p+2} &\rTo^{\partial_Q^{p+2}}&Q^{p+1} & \rTo^{\partial_Q^{p+1}} & Q^p \\
\dTo_{\z_{p+2}}& &\dTo_{\z_{p+1}} &        & \dTo_{\z_p} \\
Q^2 &\rTo^{\partial_Q^2}&Q^1 & \rTo^{\partial_Q^1} & Q^0 \\
\dTo(0,2)^{h}& &\dTo_{g} &        & \dTo_{f} \\
&&J_{0} & \rTo^{\partial^J_1} & J_{1}\\
&&\dTo_{\widetilde{\g_0}} &            & \dTo_{\widetilde{\g}_1} \\
J_{0}&\rTo^{\partial^J_1}&J_{1} & \rTo^{\partial^J_2} & J_2\\
\dTo_{\widetilde{\g'}_0}&&\dTo_{\widetilde{\g'}_1} &            & \dTo_{\widetilde{\g'}_2} \\
J_{q-2}&\rTo^{\partial^J_{q-1}}&J_{q-1} & \rTo^{\partial^J_q} & J_q\\
\end{diagram}
where the maps $\z_i$ are induced by some $[\z]\in E^p(B)$ and the maps $\widetilde{\g'}_k$ are induced by some $[\g']\in E^{q-2}(A)$.
 Furthermore, the map $h$ may be chosen so that, for any basis vector $e\in Q^2$, $h_i(e)(1)\in I_0^{A_{\ge 2}}$.
%Let $(a_1,a_2,a_3)$ represent a class in $ E_3^{0,2}$. Then there exists $n\in\N$ and for $1\le i\le n$ there exist $[\gamma_i]\in D^1(A)$, pairs $(f_i,g_i)$ representing classes in $E_2^{0,1}$, and maps $h_i:Q^2\rightarrow J_0$ such that $(a_1,a_2,a_3)\sim \sum_i ( \widetilde{(\gamma_i)}_0f_i,  \widetilde{(\gamma_i)}_1g_i,h_i)$.
\end{lemma}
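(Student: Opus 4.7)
The plan is to chase the $E(A)$--$E(B)$-bimodule decomposition of $E_3^{p,q}$ established in Lemma \ref{E2=E3} down to the level of staircase representatives. First I would apply Lemma \ref{E2=E3} to write an arbitrary class in $E_3^{p,q}$ as a sum of triple Yoneda products $[\g'] \star [\beta] \star [\z]$ with $[\g'] \in D_1^{q-2}(A) \subset E^{q-2}(A)$, $[\beta] \in E_3^{0,2}$, and $[\z] \in E^p(B)$. The explicit descriptions of the right $E(B)$-action (pre-composition with projective lifts $\z_{p+i}: Q^{p+i} \to Q^i$ of $\z$) and the left $E(A)$-action (post-composition with injective lifts $\widetilde{\g'}_j: J_j \to J_{j+q-2}$, which exist by Lemma \ref{injRes}) given earlier in Section \ref{factorK2} are exactly what produce the top two $Q$-rows and the bottom two $J$-rows of the displayed diagram.

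Next I would unpack the middle factor $[\beta] \in E_3^{0,2}$. Combining Lemma \ref{E2page} with the identification $E_2^{0,2} = E_3^{0,2}$ from Lemma \ref{E2=E3} yields $E_3^{0,2} = E^1(A) \star E_2^{0,1}$, so $[\beta]$ is a sum of products $[\g] \star [\alpha]$ with $[\g] \in E^1(A)$ and $[\alpha] \in E_2^{0,1}$. By Lemma \ref{staircase}, each $[\alpha]$ is represented by a pair $(f,g) \in \cS_2^{0,1}$, providing the $f: Q^0 \to J_1$ and $g: Q^1 \to J_0$ arrows of the diagram. Post-composing with the injective lifts $\widetilde{\g}_0$, $\widetilde{\g}_1$ of $\g$ realizes the Yoneda product $[\g]\star[\alpha]$ at the level of $\cS_2^{0,2}$, giving the first two components $\widetilde{\g}_1 f$ and $\widetilde{\g}_0 g$ of a putative triple in $\cS_3^{0,2}$.

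The crux is supplying the third staircase component $h: Q^2 \to J_0$ extending $(\widetilde{\g}_1 f, \widetilde{\g}_0 g)$ to an element of $\cS_3^{0,2}$, together with the socle constraint that $h(e)(1) \in I_0^{A_{\ge 2}}$ for every basis vector $e\in Q^2$. This is exactly the content of Lemma \ref{lifting2}, so the ostensible main obstacle has already been dispatched. Assembling everything---pre-composing the middle triple $(\widetilde{\g}_1 f, \widetilde{\g}_0 g, h)$ with the $\z_{p+i}$ and post-composing with the $\widetilde{\g'}_j$---yields a single summand of the claimed diagrammatic form; summing over all terms in the decompositions of $[\g']$, $[\beta]$, and $[\z]$ gives the representation. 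The $\K_1$ hypothesis on $_A\I$ enters the argument only through Lemma \ref{E2=E3}, which is precisely what makes the decomposition $E_3^{p,q} = D_1^{q-2}(A) \star E_3^{0,2} \star E^p(B)$ available.
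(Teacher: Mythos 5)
Your proposal is correct and follows essentially the same route as the paper: reduce to $E_3^{0,2}$ via Lemma \ref{E2=E3}, decompose $E_2^{0,2}=E^1(A)\star E_2^{0,1}$, and use Lemma \ref{lifting2} to supply the third staircase component $h$ with the socle property. The only difference is one of bookkeeping: the paper starts from an arbitrary triple $(a_1,a_2,a_3)\in\cS_3^{0,2}$ and explicitly tracks the ambiguity terms $(d_h\alpha,0)$, $(d_v\beta,d_h\beta)$, $(0,\delta)$ of Lemma \ref{staircase} to verify the $\sim$-equivalence, whereas you work with classes and let the identification $E_2^{0,2}=E_3^{0,2}$ absorb that verification, which is legitimate.
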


%In terms of diagrams, Lemma \ref{isom} states that 

\begin{proof}
In light of Lemma \ref{E2=E3}, it suffices to prove the statement for $E_3^{0,2}$.
If $(a_1,a_2,a_3)$ represents a class in $E_3^{0,2}$, then $(a_1,a_2)$ represents a class in $E_2^{0,2}$. Since $E_2^{0,2}=E^1(A)\star E_2^{0,1}$, the class $[a_1]\in E_2^{0,2}$ can be represented as a sum of diagrams of the form
\begin{diagram}
Q^1 & \rTo^{\partial_Q^1} & Q^0 \\
\dTo_{g} &        & \dTo_{f} \\
J_{0} & \rTo^{\partial^J_1} & J_{1}\\
\dTo_{\widetilde{\g}_0} && \dTo_{\widetilde{\g}_1} \\
J_{1} & \rTo^{\partial^J_2} & J_2\\
\end{diagram}
where $(f,g)$ represents a class in $E_2^{0,1}$ and $\g$ represents a class in $E^1(A)$. 
%
%there exists $n\in\N$ and for each $1\le i\le n$ there exists $[\gamma_i]\in E^1(A)$ and pairs $(f_i,g_i)$ representing classes in $E_2^{0,2}$ such that $(a_1,a_2)\sim \sum_i (\widetilde{(\gamma_i)}_0f_i, \widetilde{(\gamma_i)}_1g_i)$. 
Let $\alpha$, $\beta$, and $\delta$ be as in Lemma \ref{staircase} such that
$(a_1,a_2)=\sum_i (\widetilde{(\gamma_i)}_1 f_i, \widetilde{(\gamma_i)}_0 g_i)+(d_h\alpha,0)+(d_v\beta, d_h\beta)+(0,\delta)$. 

%Since $Q^{\bullet}$ is a minimal resolution, $\im\ d_Q^2\subset Q^1_+$. By Lemma \ref{lifting} and $B$-linearity, we have $d_h\widetilde{(\g_i)}_0g_i=\widetilde{(\g_i)}_0g_i\partial_Q^2=0.$ 
By Lemma \ref{lifting2}, there exist maps $h_i$ such that 
 $d_h ( \widetilde{(\gamma_i)}_0g_i)+d_vh_i=0$. Since $d_ha_2+d_va_3=0$, we have 
\begin{align*}
-d_va_3=d_ha_2&=d_h\left(\sum_i \widetilde{(\gamma_i)}_0g_i+d_h\beta+\delta\right)\\
&=-\sum_i d_vh_i+d_h\delta
\end{align*}
so $d_v(a_3-\sum_i h_i) + d_h\delta=0$. Since $d_v\delta =0$, Lemma \ref{staircase} implies that $(0,\delta, a_3-\sum_i h_i)$ represents a class in $E_3^{0,2}$. Evidently, it represents 0, as do $(d_h\a, 0, 0)$ and $(d_v\beta,d_h\beta,0)$. Thus we have
\begin{align*}
\sum_i ( \widetilde{(\gamma_i)}_1f_i, \widetilde{(\gamma_i)}_0g_i,h_i)&+(d_h\alpha,0,0)+(d_v\beta,d_h\beta,0)+\left(0,\delta,a_3-\sum_i h_i\right)\\
&=\left(a_1,a_2-\delta, \sum_i h_i\right)+\left(0,\delta,a_3-\sum_i h_i\right)\\
&=(a_1,a_2,a_3)
\end{align*}
and $(a_1,a_2,a_3)\sim \sum_i ( \widetilde{(\gamma_i)}_1f_i, \widetilde{(\gamma_i)}_0g_i,h_i)$ as desired. By Lemma \ref{lifting2}, we may assume the $h_i$ have the property that $h_i(e)(1)\in I_0^{A_{\ge 2}}$ for each basis vector $e\in Q^2$.

\end{proof}

We now state a version of Lemmas \ref{E2page}-\ref{isom} for the $E_N$ page. The proof, which we omit, is by induction using the same arguments as in those lemmas, which generalize in the obvious way  if we assume that $_A\I$ is a $\K_1$ module.
 \newpage
 
 \begin{lemma}
 \label{ENpage}
 Let $A$ be a graded algebra. Let $\I$ be a graded ideal of $A$. Assume $_A\I$ is a $\K_1$ $A$-module and $B=A/\I$ acts trivially on $\Ext_A(B,k)$.
Let $N\ge 2$. For any $p\ge 0$ and for $q\ge N-1$, any element of $E_N^{p,q}$ can be represented by a sum of diagrams of the form

 \begin{diagram}
Q^{p+N} &\rTo &\cdots&\rTo &Q^{p+2}&\rTo&Q^{p+1}&\rTo&Q^p\\
\dTo_{\z_{p+N}}        &         &           &      &\dTo_{\z_{p+2}}         &        &\dTo_{\z_{p+1}}      &       &\dTo_{\z_p}\\
Q^{N-1}  &\rTo & \cdots &\rTo&Q^2         &\rTo  &Q^1       &\rTo&Q^0\\
\dTo(0,3)_{b_N}&      &&   &\dTo(0,2)_{b_3}&         & \dTo_{b_2}     &       &\dTo_{b_1}\\
             &&&          &                &          &J_0       &\rTo&J_1\\
           &     &&    &               &           &\dTo_{\widetilde{(\g_1)}_0}     &        &\dTo_{\widetilde{(\g_1)}_1}\\
          &          &&&J_0         &\rTo   &J_1      &\rTo  &J_2\\
         &       &&  &\dTo_{\widetilde{(\g_2)}_0}       &           &\dTo_{\widetilde{(\g_2)}_1}     &        &\dTo_{\widetilde{(\g_2)}_2}\\
         &      &           &         &\vdots        &          &\vdots    &         &\vdots\\
        &      &           &         &\dTo        &          &\dTo    &         &\dTo\\
J_0        &\rTo &\cdots&\rTo  &J_{N-3}         &\rTo   &J_{N-2}     &\rTo  &J_{N-1}\\
\dTo_{\widetilde{\g}_{0}}        & &&  &\dTo_{\widetilde{\g}_{N-3}}         &   &\dTo_{\widetilde{\g}_{N-2}}     &  &\dTo_{\widetilde{\g}_{N-1}}\\
J_{q-N+1}        &\rTo &\cdots&\rTo  &J_{q-2}         &\rTo   &J_{q-1}     &\rTo  &J_q\\
\end{diagram}
where the maps $\z_i$ are defined by the action of some $[\z]\in E^p(B)$ on $E_N^{0,q}$, the maps $\widetilde{\g}_k$ are defined by the action of some $[\g]\in E^{q-N+1}(A)$ on $E_N^{0,N-1}$,  and for each $1\le j\le N-2$ the maps $\widetilde{(\g_j)}_k$ are defined by the action of some $[\g_j]\in E^1(A)$ on $E_{j+1}^{0,j}$. Furthermore, for $j>2$, the $b_j$ can be chosen such that, for any vector $e$ in a basis for $Q^{j-1}$, $b_j(e)(1)$ is a linear form. The differential $d_N^{p,q}=0$ for all $q\ge N$. We have $E_N^{p,q}= E_{N+1}^{p,q}$ and  $E_{N+1}^{p,q}=D_1^{q-N}(A)\star E_{N+1}^{0,N}\star E^p(B)$ for all $p\ge 0$ and all $q\ge N$.
\end{lemma}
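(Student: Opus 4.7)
The plan is to induct on $N$, with base case $N = 2$ being Lemmas \ref{E2page}, \ref{E2=E3}, and \ref{isom} taken together: they give the staircase representation of $E_3^{p,q}$, the vanishing of $d_2^{p,q}$ for $q \ge 2$, and the identification $E_2^{p,q} = E_3^{p,q} = D_1^{q-2}(A) \star E_3^{0,2} \star E^p(B)$.

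For the inductive step, assume the lemma holds for some $N \ge 2$ and let $[a_1] \in E_N^{p,q}$ with $q \ge N$.  By the inductive hypothesis, I can choose a representative $(a_1,\ldots,a_N) \in \cS_N^{p,q}$ of the displayed form, and the top entry $a_N$ factors through a diagonal map $b_N : Q^{N-1} \to J_0$ whose basis values satisfy $b_N(e)(1) \in I_0^{A_{\ge 2}}$.  The heart of the argument is to produce $a_{N+1} \in A^{p+N,\, q-N}$ with $d_h a_N + d_v a_{N+1} = 0$; this simultaneously yields $d_N^{p,q}[a_1] = [d_h a_N] = 0$ and extends the representative to an element of $\cS_{N+1}^{p,q}$, giving $E_N^{p,q} = E_{N+1}^{p,q}$.

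To build $a_{N+1}$, I would adapt the construction of Lemma \ref{lifting2}.  Because $_A\I$ is a $\K_1$ module, the relevant composition of $d_h a_N$ with the outer $\widetilde{\g_j}$'s lifts through $\partial^J$ to a socle element by precisely the mechanism of Lemma \ref{lifting}.  Iterating the $\K_1$ lifting of Lemma \ref{lifting2} once more produces a new diagonal map $b_{N+1} : Q^{N} \to J_0$, again with $b_{N+1}(e)(1)$ a linear form, together with the necessary outer $\widetilde{\g_j}$ compositions; their composite is the desired $a_{N+1}$.  The refined decomposition $E_{N+1}^{p,q} = D_1^{q-N}(A) \star E_{N+1}^{0,N} \star E^p(B)$ then follows by re-bracketing: the Yoneda product encoding $[a_1]$ acquires one extra $E^1(A)$ factor (the new $\widetilde{(\g_{N-1})}$), which gets absorbed into the interior $E_{N+1}^{0,N}$, dropping the external cohomological exponent on $D_1(A)$ from $q - N + 1$ to $q - N$.

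The main obstacle is the careful bookkeeping at each inductive step: one must verify that the freshly produced $b_{N+1}$ truly has linear image (not merely values in $I_0^{A_{\ge 2}}$ broadly), so that the extracted factor genuinely lies in $E^1(A)$ and the stratification into $D_1(A) \star E_{N+1}^{0,N} \star E^p(B)$ is preserved.  This is precisely where the $\K_1$ hypothesis on $_A\I$—rather than merely a $\K_2$ hypothesis—is essential, mirroring the role of linear independence of the top rows of $L(f_i)$ in the matrix criterion of Proposition \ref{K2matrixCond}.
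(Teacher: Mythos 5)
Your proposal is correct and matches the paper's intent: the paper in fact omits the proof of this lemma entirely, stating only that it ``is by induction using the same arguments as in those lemmas, which generalize in the obvious way if we assume that $_A\I$ is a $\K_1$ module,'' and your induction on $N$ --- base case from Lemmas \ref{E2page}, \ref{E2=E3}, and \ref{isom}, inductive step via the lifting mechanism of Lemmas \ref{lifting} and \ref{lifting2} to produce $a_{N+1}$ (hence $d_N^{p,q}=0$ and $E_N^{p,q}=E_{N+1}^{p,q}$), followed by re-bracketing one $E^1(A)$ factor to get $D_1^{q-N}(A)\star E_{N+1}^{0,N}\star E^p(B)$ --- is precisely that argument spelled out.
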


  We recall the following standard facts. See \cite{CE} for details.
  
  %Done
  \begin{lemma}
  \label{Einfinity}
  Let $F$ be a filtered cocomplex and let $E_\bullet$ be the spectral sequence associated with the filtration converging to $H^*(F)$.  
  \begin{enumerate}
\item  If $E_{\infty}^{u,n-u}=0$ for $u<p$, there results an epimorphism $H^n(F)\rightarrow E_{\infty}^{p,n-p}$. 
\item If the filtration is convergent and $E_{\infty}^{u,n-u}=0$ for $u>p$, there results a monomorphism ${E_{\infty}^{p,n-p}\rightarrow H^n(F)}$.
\end{enumerate}
  \end{lemma}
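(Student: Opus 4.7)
The plan is to deduce both statements from the standard identification of $E_\infty$ as the associated graded of the induced filtration on $H^n(F)$. For the filtration $F = \{F^p\}_{p\ge 0}$ on the cocomplex, one defines the induced filtration
$$ F^p H^n(F) := \operatorname{image}\bigl(H^n(F^p) \to H^n(F)\bigr), $$
and for a first-quadrant spectral sequence the fundamental isomorphism is
$$ E_\infty^{p,n-p} \;\cong\; F^p H^n(F) / F^{p+1} H^n(F), $$
with $F^0 H^n(F) = H^n(F)$ and $F^u H^n(F) = 0$ once $u > n$. I would take this identification as given from the standard setup in \cite{CE}; the proof of the lemma is then bookkeeping.

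For part (1), I would argue by descending induction on $u$, starting from $u = p-1$, that the hypothesis $E_\infty^{u,n-u} = 0$ for $u < p$ implies $F^u H^n(F) = F^{u+1} H^n(F)$, since each successive quotient $F^u H^n(F)/F^{u+1} H^n(F) \cong E_\infty^{u,n-u}$ vanishes. Chaining these equalities down to $u=0$ gives $H^n(F) = F^0 H^n(F) = F^p H^n(F)$. The canonical quotient map
$$ H^n(F) = F^p H^n(F) \twoheadrightarrow F^p H^n(F)/F^{p+1} H^n(F) \cong E_\infty^{p,n-p} $$
is then the desired epimorphism.

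For part (2), I would use convergence of the filtration, which in the Hausdorff sense guarantees $\bigcap_{u \ge 0} F^u H^n(F) = 0$. The vanishing hypothesis $E_\infty^{u,n-u} = 0$ for $u > p$ gives, by the same successive-quotient argument as above, $F^{p+1} H^n(F) = F^{p+2} H^n(F) = \cdots$, and convergence forces this stationary value to be $0$. Thus $F^{p+1} H^n(F) = 0$, so
$$ E_\infty^{p,n-p} \cong F^p H^n(F)/F^{p+1} H^n(F) = F^p H^n(F) \hookrightarrow H^n(F), $$
giving the desired monomorphism.

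There is essentially no obstacle beyond keeping track of conventions: the first-quadrant boundedness gives automatic convergence on the top end (used in (1)), while the explicit hypothesis of convergence is what lets the bottom-end vanishing conclusion in (2) produce an actual subobject of $H^n(F)$ rather than merely a subquotient. Both facts are proved this way in Cartan--Eilenberg, so I would simply cite \cite{CE} after indicating the argument above.
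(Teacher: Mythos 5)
Your argument is correct and is exactly the standard associated-graded identification from Cartan--Eilenberg that the paper itself invokes without proof (it simply cites \cite{CE} and states the lemma as a recalled fact). Both the collapsing of the filtration above $p$ in (1) and the use of convergence to kill $F^{p+1}H^n(F)$ in (2) are handled correctly, so your write-up is a faithful reconstruction of the intended proof.
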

  
  We will apply this result in the cases $p=0$ and $p=n$ to obtain results about $E_{\infty}^{0,n}$ and $E_{\infty}^{n,0}$ in the case where $A$ is a Koszul algebra.

%Done
  \begin{prop}
  \label{survivesToE3}
 Let $A$ be a quadratic graded $k$-algebra, $\I\subset A_{\ge 2}$  a graded ideal of $A$, and $B=A/\I$. Then the differential $d_2^{0,1}:E_2^{0,1}\rightarrow E_2^{2,0}$ is an isomorphism in internal degrees greater than 2. If, additionally, $A$ and $B$ are as in Lemma \ref{ENpage}, then for $p\ge 1$ and in internal degrees greater than $p+2$, the image of $d_2^{p,1}$ consists of elements of $E_2^{p+2,0}$ which can be represented as sums in $E^2(B)\star E^p(B)$.
  \end{prop}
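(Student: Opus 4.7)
My plan is to exploit the convergence of the Cartan--Eilenberg spectral sequence $E_2^{p,q}\Rightarrow \Ext_A^{p+q}(k,k)$ together with the edge monomorphisms of Lemma~\ref{Einfinity}(2) for the first statement, and to combine the bimodule identification of Lemma~\ref{triviality} (cf.\ Remark~\ref{remark}) with the Leibniz rule on $E_2$ for the second.

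For the first statement, note that because $A$ is generated in degree $1$ the group $E^1(A)=E^{1,1}(A)$ lives in internal degree $1$ only; and because $A$ is quadratic one has the Koszul-type vanishing $E^{2,n}(A)=0$ for $n>2$. On the spectral sequence, no $d_r$ can enter $E_r^{0,1}$ or leave $E_r^{2,0}$ nontrivially, so the only differential touching these bidegrees is $d_2^{0,1}$. Consequently $E_\infty^{0,1}=\ker d_2^{0,1}$ and $E_\infty^{2,0}=\mathrm{coker}\,d_2^{0,1}$. The edge monomorphisms of Lemma~\ref{Einfinity}(2) embed $E_\infty^{0,1}\hookrightarrow E^1(A)$ and $E_\infty^{2,0}\hookrightarrow E^2(A)$; both targets vanish in internal degrees greater than $2$, proving simultaneously the injectivity and surjectivity of $d_2^{0,1}$ in that range.

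For the second statement, Lemma~\ref{triviality} identifies $E_2^{p,1}\cong E_2^{0,1}\otimes E^p(B)$ as a left $E(A)$--right $E(B)$-bimodule, so every class in $E_2^{p,1}$ is a finite sum $\sum_i \xi_i\star\eta_i$ with $\xi_i\in E_2^{0,1}$ and $\eta_i\in E^p(B)=E_2^{p,0}$. Each $\eta_i$ is a permanent cycle for $d_2$ because its target $E_2^{p+2,-1}$ is zero, so the Leibniz rule yields
$$d_2^{p,1}(\xi_i\star\eta_i)\;=\;d_2^{0,1}(\xi_i)\star\eta_i\;\in\;E_2^{2,0}\star E^p(B)\;=\;E^2(B)\star E^p(B).$$
The internal degree restriction $n>p+2$ guarantees that in any such sum representation each $\xi_i$ has internal degree at least $3$, precisely the range in which the first part of the proposition gives a clean description of $d_2^{0,1}(\xi_i)$.

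The principal obstacle I foresee is controlling the surjectivity in the first statement: it rests on $E^{2,n}(A)=0$ for $n>2$, which is a Koszul-at-cohomology-degree-$2$ property of $A$ rather than a formal consequence of quadraticity. If only bare quadraticity is to be invoked, I would replace that step with a chain-level argument using Lemma~\ref{staircase}: given a representative $\theta\colon Q^2\to J_0$ of a class in $E_2^{2,0}$ of internal degree $n>2$, I would use injectivity of $I_1$ together with the quadratic structure of $A$ to lift $\theta$ to a pair $(f,g)\in \cS_2^{0,1}$ with $d_hg=\theta$, realizing $[\theta]$ as $d_2^{0,1}[f]$.
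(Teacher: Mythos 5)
Your argument is essentially the paper's own proof: both identify $E_\infty^{0,1}=\ker d_2^{0,1}$ and $E_\infty^{2,0}$ as the cokernel of $d_2^{0,1}$, compare these with $E^1(A)$ and $E^2(A)$ via the edge maps and the purity of those groups, and then obtain the second statement by pushing the first along the right $E(B)$-action using the decomposition $E_2^{p,1}\isom E_2^{0,1}\star E^p(B)$. Three small corrections. First, the edge map for $E_\infty^{0,1}$ goes the wrong way in your write-up: $E_\infty^{0,1}$ is the \emph{top quotient} of the filtration on $H^1$, and Lemma \ref{Einfinity}(2) does not apply at $(p,n)=(0,1)$ since its hypothesis would require $E_\infty^{1,0}=0$, which fails ($E_\infty^{1,0}\isom E^1(B)\neq 0$). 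What you need is the epimorphism $E^1(A)=E^{1,1}(A)$ onto $E_\infty^{0,1}$ from Lemma \ref{Einfinity}(1); this still forces $E_\infty^{0,1}$ to be concentrated in internal degree $1$, hence to vanish in degrees greater than $2$, which is exactly how the paper argues (indeed, combined with $E_2^{0,1}\isom \Ext_A^0(\I,k)$ living in degrees $\ge 2$, it gives $\ker d_2^{0,1}=0$ outright). Second, the obstacle you foresee is not one: $E^{2,n}(A)=0$ for $n\neq 2$ is a formal consequence of quadraticity, since $\Ext_A^{2,n}(k,k)$ is dual to the space of minimal defining relations of internal degree $n$; the paper invokes precisely $E^2(A)=E^{2,2}(A)$, and no chain-level replacement is needed. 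Third, in the second statement the assertion that each $\xi_i$ has internal degree at least $3$ is neither true in general (one can have $\deg\xi_i=2$ with $\eta_i$ of internal degree exceeding $p$) nor needed: $d_2^{0,1}(\xi_i)\star\eta_i$ lands in $E^2(B)\star E^p(B)$ regardless of the degree of $\xi_i$.
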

  
  \begin{proof}
   Setting $p=0$ and $n=1$ in Lemma \ref{Einfinity}(1) implies that $E^1(A)=E^{1,1}(A)$ maps onto $E_{\infty}^{0,1}$, so $E_{\infty}^{0,1}$ is concentrated in internal degree 1. For $N>2$, the differentials $d_N^{0,1}$ are zero, so cocycles of $d_2^{0,1}$ are permanent, unbounded cocycles and $E_3^{0,1}=E_{\infty}^{0,1}$. Since $E_3^{0,1}\subset E_2^{0,1}\isom \Ext_A^0(\I,k)$ and we assume that $\I$ contains no elements of degree less than 2, $\ker\ d_2^{0,1}=E_3^{0,1}=0$. 
      
  Setting $p=n=2$ in Lemma \ref{Einfinity}(2) implies  there is a monomorphism $E_{\infty}^{2,0}\hookrightarrow E^2(A)=E^{2,2}(A)$.  For $N>2$, the differential $d_N^{2-N,N-1}$ is zero. Since we also have $d_N^{2,0}=0$ for all $N>1$,  $E_3^{2,0}=E_{\infty}^{2,0}$. We conclude that $E_3^{2,0}=E_2^{2,0}/\im\ d_2^{0,1}$ is concentrated in internal degree 2. It follows that $d_2^{0,1}$ is surjective, hence is an isomorphism, in internal degrees greater than 2.
  
 For the second statement, we have $E_2^{p,1}\isom E_2^{0,1}\star E^p(B)$ and $E_2^{p+2,0}\isom E_2^{2,0}\star E^p(B)$. Since $d_2^{0,1}$ is surjective in internal degrees $\ge 2$ and since the spectral sequence differential respects the left $E(B)$-module structure, the result follows.
 
  \end{proof}
  
  We now prove that, as a result of Lemma \ref{lifting2}, we can transform the left action of $E^1(A)$ on $E_{r-1}^{0,r-2}$ into a left action of $E^1(B)$ on $E^{r-1}(B)$ using Proposition \ref{K2matrixCond}.

  \begin{prop}
  \label{survivesForever}
Let $A$ be a graded $k$-algebra and let $B=A/\I$ be a graded factor algebra such that $_A\I$ is a $\K_1$ $A$-module. Assume $B$ acts trivially on $\Ext_A(B,k)$.  If $N> 2$ and $2< r\le N$, then $\im\ d_r^{N-r,r-1}\subset E_2^{1,0}\star E^{N-1}(B)$.
    \end{prop}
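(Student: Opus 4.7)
The plan has two steps: a reduction to $p = 0$ using right $E(B)$-linearity of $d_r$, followed by a direct socle-level computation via the staircase representation from Lemma \ref{ENpage}.

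Applying Lemma \ref{ENpage} on page $r$ (with $q = r-1 \geq r-1$) yields $E_r^{N-r, r-1} = E_r^{0, r-1} \star E^{N-r}(B)$, since the $D_1^{q-(r-1)}(A) = D_1^0(A) = k$ factor is trivial. Because $d_r$ respects the right $E(B)$-module structure, $d_r(\alpha \star \zeta) = d_r(\alpha) \star \zeta$, so it suffices to show $\im\ d_r^{0, r-1} \subset E_2^{1, 0} \star E^{r-1}(B)$; the general statement then follows from $E^{r-1}(B) \star E^{N-r}(B) \subset E^{N-1}(B)$. For the reduced problem, Lemma \ref{ENpage} furnishes a staircase representative of any class in $E_r^{0, r-1}$ whose top map simplifies to $a_r = b_r: Q^{r-1} \to J_0$: the $\zeta$-lift is trivial because $p = 0$, the outer $\widetilde\gamma$ is trivial because $q - r + 1 = 0$, and since $r > 2$ the special property of $b_r$ gives $b_r(e)(1) \in I_0^{A_{\geq 2}}$ for each basis vector $e$ of $Q^{r-1}$.

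Writing $b_r(e_j)(1) = c_{j, 0}\epsilon + \sum_l c_{j, l} x_l^*$, I compute $d_r[a_1] = [b_r \circ \partial_Q^r]$ at the socle level. For $q \in Q^r$, expand $\partial_Q^r(q) = \sum_j b_j e_j$ with $b_j \in B_+$ by minimality, and choose lifts $\tilde b_j \in A_+$ with linear part $\sum_l d_{j, l} x_l$. Using the left $B$-action $(b \cdot f)(1) = f(b)$ on $J_0 = \Hom_A(B, I_0)$, the $A$-linearity of $b_r(e_j)$, and the identities $x_l \cdot \epsilon = 0$ and $x_l \cdot x_i^* = \delta_{l, i}\epsilon$ in $I_0$, one obtains $(b_r \circ \partial_Q^r)(q)(1) = \bigl(\sum_{j, l} d_{j, l} c_{j, l}\bigr)\epsilon$. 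To realize this class in $E^r(B)$ as a Yoneda product, define $\eta_j \in E^{r-1}(B) = \Hom_B(Q^{r-1}, k)$ as the dual basis to $e_j$ and $\mu_j \in E^1(B) = \Hom_B(Q^1, k)$ by $\mu_j(\xi_l) = c_{j, l}$, where $\xi_l$ is the $B$-basis of $Q^1$ with $\partial_Q^1(\xi_l) = x_l$. A chain-map lift of $\eta_j$ has $\tilde\eta_{j, 1}: Q^r \to Q^1$ satisfying $\partial_Q^1 \tilde\eta_{j, 1}(q) = \tilde\eta_{j, 0}(\partial_Q^r q)$, which forces $\tilde\eta_{j, 1}(q) \equiv \sum_l d_{j, l}\xi_l \pmod{B_+ Q^1}$; since $\mu_j$ kills $B_+ Q^1$ by $B$-linearity, $(\mu_j \star \eta_j)(q) = \sum_l d_{j, l} c_{j, l}$, and therefore $d_r[a_1] = \sum_j \mu_j \star \eta_j$, as required.

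The main technical subtlety is the interplay between the left $A$-action on $I_0$ (through which $x_l \cdot x_i^* = \delta_{l, i}\epsilon$ produces socle elements) and the minimal $B$-resolution (through which the linear parts of lifts $\tilde b_j$ of elements of $B_+$ reappear as the linear coefficients of a chain-map lift of $\eta_j$); this interplay is the explicit manifestation of the identification between $A_1^*$ and $E^1(B)$.
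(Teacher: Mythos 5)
Your proof is correct and follows essentially the same route as the paper's: reduce to $p=0$ by right $E(B)$-linearity, take the Lemma \ref{ENpage} representative with $b_r(e)(1)\in I_0^{A_{\ge 2}}$, and compute $d_h b_r$ at the socle level to see the image lands in $E^1(B)\star E^{r-1}(B)\cong E_2^{1,0}\star E^{r-1}(B)$. The only (harmless) difference is that where the paper normalizes the bases of $Q^r$ and invokes the matrix criterion of Proposition \ref{K2matrixCond} to identify which classes lie in $E^1(B)\star E^{r-1}(B)$, you exhibit $d_r[a_1]$ explicitly as the sum of Yoneda products $\sum_j \mu_j\star\eta_j$.
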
 

  \begin{proof}
 % Since $A$ is quadratic, $\I$ is a $\K_1$ $A$-module by Corollary \ref{K2ModsAreKoszul}.
%The differential $d_r^{N,0}$ is zero, so $E_r^{N,0}$ consists of permanent cocycles. 
 
 For each $2\le r\le N$, let $\{e_i^r\}$ be a homogeneous basis for $Q^r$. Let $M^r$ be the matrix of $\partial_Q^r$ with respect to these bases. Without loss of generality, we may assume the $\{e_i^r\}$ are chosen such that the nonzero rows of $L(M^r)$ are linearly independent.
%Let $(a_1, a_2,\ldots, a_r)\in \cS_r^{0,r-1}$ represent a class in $E_r^{0,r-1}$ as in Lemma \ref{staircase}. By Lemma \ref{ENpage}, it suffices to consider the case when $(a_1,\ldots,a_r)$ is
%$$(\widetilde{(\g_{N-2})}_{N-2}\cdots\widetilde{(\g_1)}_1b_1,\ldots, \widetilde{(\g_{N-2})}_1\widetilde{(\g_{N-3})}_0b_{r-2}, \widetilde{(\g_{N-2})}_0b_{r-1}, a_r)$$
%for some $b_1,\ldots b_r$ and $\g_1,\ldots, \g_{N-2}$. 
%Then
%$$(\widetilde{(\g_{N-3})}_{N-3}\cdots\widetilde{(\g_1)}_1b_1,\ldots, \widetilde{(\g_{N-3})}_0b_{r-2}, b_{r-1})\in\cS_{r-1}^{0,r-2}$$
%and by Lemma \ref{staircase}, $d_{r-1}^{0,r-2}[\widetilde{(\g_{N-3})}_{N-3}\cdots\widetilde{(\g_1)}_1b_1]=[d_hb_{r-1}].$ Thus 
%$$\partial_0^Ja_r=d_va_r=-d_ha_{r-1}=-\widetilde{(\g_{N-2})}_0b_{r-1}\partial_Q^r=-\widetilde{(\g_{N-2})}_0d_hb_{r-1}$$
%

Let $(a_1, a_2,\ldots, a_r)\in \cS_r^{0,r-1}$ represent a class in $E_r^{0,r-1}$ as in Lemma \ref{staircase}. 
By  Lemma \ref{ENpage}, we may assume that for any basis element $e=e^r_i$ of $Q^r$, $a_r(e)(1)\in I_0^{A_{\ge 2}}$. Since $a_r$ is $B$-linear, the minimality of the resolution $Q^{\bullet}$  implies that $d_ha_r(e)=0$ unless the $i$-th row of $M^r$ contains a linear element. 
By Proposition \ref{K2matrixCond} and our assumption on the nonzero rows of $L(M^r)$, the $i$-th row of $M^r$ contains a linear element if and only if $[(e_i^r)^*]\in E^r(B)$ is in the subalgebra generated by $E^1(B)\star E^{r-1}(B)$. 

Therefore, the image of $d_r^{0,r-1}$ consists of those classes in $E_r^{r,0}$ whose $E_2^{r,0}$ representatives, under the isomorphism of Lemma \ref{triviality} above, are in the subalgebra of $E^r(B)$ generated by $E^1(B)\star E^{r-1}(B)$.

Since $E_2^{1,0}=E_{\infty}^{1,0}$, we have $$E^1(B)\star E^{r-1}(B)\isom E_2^{1,0}\star E^{r-1}(B)=E_r^{1,0}\star E^{r-1}(B)$$ Thus classes in $\im\ d_r^{0,r-1}$ are equal to their $E_2^{r,0}$ representatives and the result holds for $r=N$.
The spectral sequence differential respects the right $E(B)$-module structure on $E_r^{*,r-1}$, so by Lemma \ref{ENpage}, $\im\ d_r^{N-r,r-1}=\im\ d_r^{0,r-1}\star E^{N-r}(B)\subset E_r^{1,0}\star E^{N-1}(B)$ as desired.

  \end{proof}
  
 \begin{rmk}
 \label{survivesRmk}
 A close examination of the proof above and Lemmas \ref{isom} and \ref{ENpage} reveals the slightly stronger fact that if a class in $\Ext_A^r(\I,k)$ is also in the subspace $D_1^{r}(A)\star\Ext^0_A(\I,k)$, then the image of the corresponding class in $E_{r+2}^{0,r+1}$ under $d_{r+2}^{0,r+1}$ is in $E_2^{1,0}\star E^{r+1}(B)$. This fact is often useful in applications. See Section \ref{families}.
 \end{rmk}

We are now able to prove our main theorem.

 %Done
  \begin{thm} 
  \label{factorThm}
  Let $A$ be a Koszul algebra and $\I\subset A$ an ideal. Assume $B=A/\I$ acts trivially on $\Ext_A(B,k)$.
 If $_A\I$ is a $\K_2$ $A$-module, then $B=A/\I$ is a $\K_2$ algebra.
 \end{thm}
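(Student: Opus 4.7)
My plan is to prove $E(B)=D_2(B)$ by strong induction on the cohomological degree $n$, with the base case $n\le 2$ trivial. The inductive step will combine two things: the multiplicative behavior of the edge homomorphism $E(B)\to E(A)$, which handles the classes surviving to $E_\infty^{n,0}$, and Propositions \ref{survivesToE3} and \ref{survivesForever}, which handle the classes killed by differentials into $E_2^{n,0}$.

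For $n\ge 3$, I assume $E^m(B)\subset D_2(B)$ for all $m<n$. The edge homomorphism $\varphi\colon E^n(B)=E_2^{n,0}\twoheadrightarrow E_\infty^{n,0}\hookrightarrow E^n(A)$ is an algebra homomorphism — it is the standard ``restriction'' map of Yoneda Ext-algebras induced by the surjection $A\twoheadrightarrow B$. Since $\I\subset A_{\ge 2}$, the degree one generators of $A$ and $B$ coincide, and $\varphi$ restricts to an isomorphism $E^1(B)\xrightarrow{\sim}E^1(A)$. Because $A$ is Koszul, $E^n(A)=E^{n,n}(A)=(E^1(A))^n$, so for any $\zeta\in E^n(B)$ I can produce $\eta\in (E^1(B))^n\subset D_1(B)\subset D_2(B)$ with $\varphi(\eta)=\varphi(\zeta)$. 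It then suffices to prove that $\zeta-\eta$, which lies in $\ker\varphi=\sum_{r\ge 2}\im d_r^{n-r,r-1}$, also lies in $D_2(B)$.

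Here the two propositions take over. Proposition \ref{survivesForever} gives $\im d_r^{n-r,r-1}\subset E^1(B)\star E^{n-1}(B)$ for all $r\ge 3$, with no internal degree restriction. For $r=2$, the decomposition $E_2^{n-2,1}=E_2^{0,1}\star E^{n-2}(B)$ from Lemma \ref{triviality} together with the fact that $d_2$ respects the right $E(B)$-module structure gives $\im d_2^{n-2,1}\subset d_2^{0,1}(E_2^{0,1})\star E^{n-2}(B)\subset E^2(B)\star E^{n-2}(B)$; this containment is essentially the content of Proposition \ref{survivesToE3} and holds in every internal degree. The inductive hypothesis then yields $E^1(B)\star E^{n-1}(B)+E^2(B)\star E^{n-2}(B)\subset D_2(B)$, so $\zeta-\eta\in D_2(B)$ and hence $\zeta\in D_2(B)$.

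The main obstacle I anticipate is the careful identification of the edge map with the natural restriction map on Yoneda algebras and the verification that it is multiplicative and an isomorphism on $E^1$ — all standard but requiring explicit unwinding in the double complex model used throughout Section \ref{factorK2}. A secondary subtlety is that Proposition \ref{survivesToE3} is stated only for internal degrees strictly greater than $n$, so I must point out that the factorization $\im d_2^{n-2,1}\subset E^2(B)\star E^{n-2}(B)$ in internal degree exactly $n$ follows from the module-structure description rather than the surjectivity of $d_2^{0,1}$.
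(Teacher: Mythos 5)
Your argument is correct and runs on exactly the same engine as the paper's proof: the change-of-rings spectral sequence, the decomposition of $E_2^{p,q}$ from Lemma \ref{triviality}, and Propositions \ref{survivesToE3} and \ref{survivesForever} to push the images of all differentials landing in the bottom row into $E^1(B)\star E^{n-1}(B)+E^2(B)\star E^{n-2}(B)$. Your induction is simply the contrapositive of the paper's ``minimal bad degree $N$'' argument, and your observation that the $r=2$ containment $\im\, d_2^{n-2,1}\subset E^2(B)\star E^{n-2}(B)$ holds in every internal degree (because $\im\, d_2^{0,1}\subset E_2^{2,0}\cong E^2(B)$ trivially and $d_2$ is right $E(B)$-linear) is accurate and is all that is needed from Proposition \ref{survivesToE3}. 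The one genuine divergence is how the part of $E^n(B)$ \emph{not} killed by differentials is handled. You invoke the multiplicativity of the edge homomorphism $E_2^{n,0}\to E^n(A)$ and surjectivity of $(E^1(A))^{\star n}\to E^n(A)$ to manufacture a correction term $\eta\in(E^1(B))^{\star n}$; this works, but identifying the edge map with the restriction map of Yoneda algebras and verifying it is multiplicative in the double-complex model is extra machinery the paper never develops. The paper's route is lighter: a bihomogeneous class outside $D_2(B)$ necessarily has internal degree strictly greater than its cohomological degree (since $E^{n,n}(B)\subset D_1(B)$), so if it survived it would inject via Lemma \ref{Einfinity} into $E^{n,j}(A)$ with $j>n$, which vanishes by Koszulity --- only the additive injectivity of the edge map is used, not its multiplicativity. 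Finally, do record the paper's opening reduction: Propositions \ref{survivesForever} and \ref{ENpage} require $_A\I$ to be a $\K_1$ module, which follows from the $\K_2$ hypothesis via Corollary \ref{K2ModsAreKoszul} because a Koszul algebra is quadratic.
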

 
\begin{proof}
Since $A$ is quadratic, $_A\I$ is a $\K_1$ $A$-module by Corollary \ref{K2ModsAreKoszul}.
Let $N$ be minimal such that a class $[\z]\in E^N(B)$ is not generated by $E^1(B)$ and $E^2(B)$. Without loss of generality, we may assume $[\z]$ is homogeneous in the bigrading on $E(B)$. Then $N>2$ and the internal degree of $[\z]$ is at least $N+1$. The corresponding class $\a\in E_2^{N,0}\isom E^N(B)$ is a permanent cocycle.

By Proposition \ref{survivesToE3}, $\a\notin\im\ d_2^{N-2,1}$, so $\a$ survives to a nonzero class $[\a]\in E_3^{N,0}$. By Proposition \ref{survivesForever}, $[\a]$ survives to a nonzero class $\a_{\infty}\in E_{\infty}^{N,0}$. By Lemma \ref{Einfinity}, $E_{\infty}^{N,0}\hookrightarrow E^N(A)$, so $E^N(A)$ is not concentrated in internal degree $N$. This contradicts the Koszulity of $A$.

\end{proof}

We remark that if $\I$ is a Koszul $A$-module, then $B$ acts trivially on $\Ext_A(B,k)$ by degree considerations. If we further assume that $\I$ is generated in degree 2, then $B$ is a Koszul algebra. This special case of Theorem \ref{factorThm} was proved by Backelin and Froberg in \cite{BackFro}.

We also note that  Example 9.3 of \cite{CS} shows Theorem \ref{factorThm} is false if $A$ is only assumed to be a $\K_2$ algebra.
The following example shows that the hypothesis that $B$ acts trivially on $\Ext_A(B,k)$ cannot be removed from Theorem \ref{factorThm}.

\begin{example}
Let $A=k\la x,y\ra/\la x^2-xy\ra$, and let $\I=\langle yx\rangle$ be a two-sided ideal. The algebra $A$ is isomorphic to the monomial quadratic algebra $k\la X, Y\ra/\la XY\ra$, hence $A$ is a Koszul algebra (see Corollary 4.3 of \cite{PP}). As a left $A$-module, $_A\I=Ayx+Ayx^2$ and
$$0\rightarrow A(-4)\xrightarrow{\begin{pmatrix} x^2 & -x\\ \end{pmatrix}} A(-2)\oplus A(-3)\xrightarrow{\begin{pmatrix} yx & yx^2\\ \end{pmatrix}^T} 
\I\rightarrow 0$$
is a graded free resolution of $_A\I$. 
The matrix criterion of Proposition \ref{K2matrixCond} shows that $_A\I$ is a $\K_1$ $A$-module. The Hilbert series of $B=A/\I$ is easily seen to be $1+2t+2t^2+t^3/(1-t)$. Therefore, the Poincar\'{e} series of $_Bk$ begins $1-2t+2t^2-t^3-t^4+\cdots$. The negative coefficient of $t^4$ implies $\dim \Ext_B^{i,4}(k,k)\neq 0$ for some $i\neq 4$. Thus the quadratic algebra $B$ is not a Koszul algebra, so $B$ is not a $\K_2$ algebra.
One can check that the image of $x$ in $B$ acts nontrivially on $\Ext_A(B,k)$ by sending the class in $\Ext_A^0(B,k)$ corresponding to the generator $yx$ to the class corresponding to the generator $yx^2$.
\end{example}

\section{Face Rings}
\label{faceRings}

%(Some of this might go in the introduction.)
We recall from Section 1 that there is a correspondence which associates to a simplicial complex $\D$ the Stanley-Reisner ideal $I_{\D}$ generated by the monomials $\Pi_{i\in \tau} x_i$ for $\tau\subset [n]$, $\tau\notin\D$. We also recall that the factor algebra $k[\D]=k[x_1,\ldots,x_n]/I_{\D}$ is called the face ring of the simplicial complex $\D$. For simplicity, we assume that $I_{\D}$ contains no linear monomials.
%
%The combinatorics of $\D$ play an important role in determining both the homological and ring-theoretic properties of $k[\D]$, and vice versa. The study of these connections is an active and deep area of research. However, relatively little seems to be known about the structure of the Yoneda Ext-algebra of $k[\D]$. A significant result is due to Fr\"{o}berg.
%
%\begin{thm}[\cite{Fro}]
%If $I_{\D}$ can be generated by quadratic monomials, then $k[\D]$ is a Koszul algebra.
%\end{thm}
%
%Since Koszul algebras must be quadratic, this theorem completely characterizes Koszul face rings. A major obstacle to proving analogous structural results for the Yoneda algebras of non-quadratic face rings is that these algebras must have defining relations of different homogeneous degrees. Until the recent introduction of $\K_2$ algebras, most generalized Koszul properties required an algebra to have defining relations of  a single homogeneous degree.  Determining which face rings are $\K_2$ algebras is therefore an important and natural problem. In the course of proving some sufficient conditions for a face ring to be a $\K_2$ algebra, we will show that problem is considerably more subtle than the Koszul case. 
Our main tool is the following special case of Theorem \ref{factorThm}.

\begin{thm}
\label{K2FaceRings}
If $I$ is an ideal in $S=k[x_1,\ldots,x_n]$ and if $I$ is a $\K_2$ $S$-module, then $S/I$ is a $\K_2$ algebra.
\end{thm}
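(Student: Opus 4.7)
The plan is to observe that Theorem \ref{K2FaceRings} is just the specialization of Theorem \ref{factorThm} to the case $A = S = k[x_1,\ldots,x_n]$. To invoke that theorem, I need to verify three hypotheses: (i) $S$ is a Koszul algebra, (ii) $B = S/I$ acts trivially on $\Ext_S(B,k)$, and (iii) $I$ is a $\K_2$ $S$-module. Hypothesis (iii) is exactly the stated assumption, so only (i) and (ii) need to be addressed.

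For (i), the polynomial ring $S$ is the archetypal Koszul algebra: its Yoneda algebra is isomorphic to the exterior algebra on the dual of $\langle x_1,\ldots,x_n\rangle$, which is concentrated on the diagonal $\Ext_S^{i,j}(k,k) = 0$ for $i \neq j$. This is standard and needs no further comment.

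For (ii), I appeal to the observation made immediately after the statement of Theorem \ref{factorThm} (and in the introduction after Theorem \ref{factorThmIntro}): whenever $A$ is commutative or graded-commutative, $B = A/\I$ automatically acts trivially on $\Ext_A(B,k)$. Since $S$ is commutative, this applies to our situation. With all three hypotheses in place, Theorem \ref{factorThm} gives the conclusion that $S/I$ is a $\K_2$ algebra.

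There is no genuine obstacle to overcome here; the theorem is explicitly advertised in the text as a special case of the main factorization theorem. The role of this statement in the paper is to package the commutative case into a form that is directly applicable to Stanley--Reisner rings, where $I = I_\D$ will be a squarefree monomial ideal and the remainder of Section \ref{faceRings} will focus on verifying the $\K_2$-module hypothesis on $I_\D$ under appropriate combinatorial/topological assumptions on the Alexander dual $\D^*$.
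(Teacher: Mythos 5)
Your proposal is correct and matches the paper's proof, which likewise deduces the theorem from Theorem \ref{factorThm} by noting that $S$ is commutative (hence $S/I$ acts trivially on $\Ext_S(S/I,k)$) and implicitly that the polynomial ring is Koszul. No further comment is needed.
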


\begin{proof}
Since $S$ is commutative, $S/I$ acts trivially on $\Ext_S(S/I,k)$. 

\end{proof}

Let $\D$ be an abstract simplicial complex on $[n]$. If $\t\in\D$, the \emph{link} of $\t$ in $\D$ is $\lk_{\D}\t =\{\s\in\D\ |\ \s\cap\t=\emptyset, \s\cup\t\in\D\}$. The \emph{Alexander dual} complex of $\D$ is the simplicial complex  $\D^*=\{[n]-\t\ |\ \t\notin\D\}$. We denote the subcomplex of $\D$ whose maximal faces are the $q$-faces of $\D$ by $\D(q)$. If all maximal faces of $\D$ have the same dimension, we say that $\D$ is \emph{pure}.
We adopt the usual terminology and call $\D$  \emph{Cohen-Macaulay over $k$} if $\D$ is pure and if for all faces $\t\in\D$ and all $i<\dim \lk_{\D} \t$ we have $\widetilde{H}_i(\lk_{\D}\t, k)=0$. (We remind the reader that for any simplicial complex $\D$, $\widetilde{H}_i(\D,k)=0$ for $i<0$ unless $\D=\{\emptyset\}$ is the irrelevant complex, in which case $\widetilde{H}_{-1}(\D,k)=k$ and $\widetilde{H}_i(\D,k)=0$ otherwise.) This definition is motivated by the fundamental result of Reisner (see \cite{Reis}) that $\D$ is a Cohen-Macaulay complex over $k$  if and only if $k[\D]$ is a Cohen-Macaulay ring. In \cite{ER}, Eagon and Reiner characterized the Cohen-Macaulay property in terms of resolutions of $I_{\D}$.

\begin{thm}[\cite{ER}]
\label{CMcomplex}
The simplicial complex $\D^*$ is Cohen-Macaulay if and only if the squarefree monomial ideal $I_{\D}$ has a linear free resolution as a $k[x_1,\ldots,x_n]$-module.
\end{thm}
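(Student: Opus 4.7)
The plan is to combine Hochster's combinatorial formula for the multigraded Betti numbers of the face ring with simplicial Alexander duality. These two tools translate the linearity of the minimal free $S$-resolution of $I_\D$ directly into the cohomology-vanishing part of Reisner's criterion for $\D^*$ to be Cohen-Macaulay over $k$, while a simple purity matching handles the ``pure'' part of that criterion.

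First, Hochster's formula gives
\[
\beta^S_{i,j}(S/I_\D)=\sum_{W\subseteq[n],\,|W|=j}\dim_k\widetilde{H}_{j-i-1}(\D_W;k),
\]
where $\D_W=\{\sigma\in\D:\sigma\subseteq W\}$. Writing $d$ for the smallest size of a non-face of $\D$, the ideal $I_\D$ has a linear resolution precisely when $I_\D$ is generated in the single degree $d$ (equivalently, every minimal non-face of $\D$ has size $d$) and $\beta^S_{i,j}(S/I_\D)=0$ for every $i\ge 1$ with $j\ne i+d-1$. I would then establish the combinatorial identity
\[
\lk_{\D^*}([n]\setminus W)=(\D_W)^*,
\]
with the Alexander dual on the right formed inside the ground set $W$, by unwinding definitions, and apply simplicial Alexander duality $\widetilde{H}_i(\Gamma;k)\cong\widetilde{H}_{|V(\Gamma)|-i-3}(\Gamma^*;k)$ to obtain the key isomorphism
\[
\widetilde{H}_{j-i-1}(\D_W;k)\cong\widetilde{H}_{i-2}\bigl(\lk_{\D^*}([n]\setminus W);k\bigr).
\]

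A short check shows $\D^*$ is pure of dimension $n-d-1$ iff all minimal non-faces of $\D$ share size $d$, and then $\dim\lk_{\D^*}([n]\setminus W)=|W|-d-1$ for each face $[n]\setminus W\in\D^*$. With these in hand, the equivalence becomes a degree-bookkeeping exercise: the ``linear range'' $j=i+d-1$ for the Betti numbers matches, under the displayed isomorphism, exactly the top homological degree $i-2=\dim\lk_{\D^*}([n]\setminus W)$, so Betti-number vanishing outside the linear range is equivalent to $\widetilde{H}_t(\lk_{\D^*}\sigma;k)=0$ for $t<\dim\lk_{\D^*}\sigma$, which together with purity is Reisner's criterion. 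The main obstacle I anticipate is keeping the indexing straight through the two dualities and handling the degenerate cases: when $[n]\setminus W\notin\D^*$ one has $W\in\D$, so $\D_W$ is a simplex and contributes nothing on both sides, and likewise when $|W|<d$. Once the identity $\lk_{\D^*}([n]\setminus W)=(\D_W)^*$ is installed, the rest is essentially tautological, with each direction of the iff read off from the same chain of isomorphisms.
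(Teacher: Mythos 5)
This statement is the Eagon--Reiner theorem, which the paper imports from \cite{ER} as a black box and does not prove, so there is no in-paper argument to compare against. Your sketch is the standard proof---essentially the original one---via Hochster's formula and combinatorial Alexander duality, and it is correct. The two points that genuinely require care are exactly the ones you flag, and you handle them correctly: (i) the identity $\lk_{\D^*}([n]\setminus W)=(\D_W)^*$ (dual formed inside the ground set $W$) holds because for $\rho\subseteq W$ one has $\rho\cup([n]\setminus W)=[n]\setminus(W\setminus\rho)\in\D^*$ if and only if $W\setminus\rho\notin\D$, i.e.\ if and only if $\rho\in(\D_W)^*$; and (ii) the subsets $W$ with $[n]\setminus W\notin\D^*$ are precisely the faces $W\in\D$, for which $\D_W$ is a full simplex with vanishing reduced homology, so Hochster's sum sees exactly the links of faces of $\D^*$. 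Since you pin down purity first in each direction (generation of $I_\D$ in the single degree $d$ is equivalent to $\D^*$ being pure of dimension $n-d-1$), the identification $\widetilde{H}_{j-i-1}(\D_W;k)\cong\widetilde{H}_{i-2}\bigl(\lk_{\D^*}([n]\setminus W);k\bigr)$ legitimately converts the linear strand $j=i+d-1$ into top-dimensional link homology and vanishing off the strand into Reisner's criterion; in the converse direction the nonnegativity of the summands in Hochster's formula is what lets you deduce vanishing of each individual link homology group from vanishing of the Betti number. I see no gap.
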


In \cite{Stanley}, Stanley introduced the more general notion of a sequentially Cohen-Macaulay complex. We call $\D$ \emph{sequentially Cohen-Macaulay over $k$} if $\D(q)$ is Cohen-Macaulay for all $q\in\N$. 
In \cite{Duval}, Duval proved that this definition is equivalent to Stanley's.  We note that if $\D$ is pure, then $\D$ is sequentially Cohen-Macaulay if and only if $\D$ is Cohen-Macaulay. Herzog and Hibi proved the analog of Theorem \ref{CMcomplex} for sequentially Cohen-Macaulay complexes.

\begin{thm}[\cite{HH}]
\label{SCMcomplexes}
The complex $\D^*$ is sequentially Cohen-Macaulay if and only if  $I_{\D}$ has a componentwise linear resolution as a $k[x_1,\ldots,x_n]$-module.
\end{thm}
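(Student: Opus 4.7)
My plan is to reduce the theorem to the Eagon--Reiner theorem (Theorem~\ref{CMcomplex}) one ``component'' at a time. For each $j\ge 0$, introduce the simplicial complex
$$\D^{(j)}=\{\tau\subseteq[n]:\text{every $j$-element subset of $\tau$ belongs to }\D\}.$$
A routine check shows that the minimal non-faces of $\D^{(j)}$ are precisely the size-$j$ subsets of $[n]$ that fail to lie in $\D$, so the Stanley--Reisner ideal $I_{\D^{(j)}}$ is exactly the ideal generated by the squarefree degree-$j$ monomials in $I_{\D}$. Moreover, a direct computation identifies the Alexander dual: $\tau\in(\D^{(j)})^{*}$ if and only if $\tau\subseteq[n]\setminus\s$ for some size-$j$ non-face $\s$ of $\D$; setting $\rho=[n]\setminus\s\in\D^*$, this exhibits $\tau$ as a subface of some $(n-j-1)$-dimensional face of $\D^*$, so $(\D^{(j)})^{*}=\D^{*}(n-j-1)$.

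Applying Eagon--Reiner to each $\D^{(j)}$ now gives: the squarefree ideal $I_{\D^{(j)}}$ has a linear $S$-resolution if and only if $\D^*(n-j-1)$ is Cohen--Macaulay over $k$. As $j$ ranges over $\N$, the complexes $\D^*(n-j-1)$ range over all the pure skeleta $\D^{*}(q)$ appearing in the paper's definition of sequentially Cohen--Macaulay. Hence all the ideals $I_{\D^{(j)}}$ have linear resolutions if and only if $\D^{*}$ is sequentially Cohen--Macaulay.

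It remains to identify the condition ``$I_\D$ has a componentwise linear resolution'' with the condition ``each $I_{\D^{(j)}}$ has a linear resolution.'' The ideal $(I_{\D})_{\la j\ra}$ contains $I_{\D^{(j)}}$ as the subideal generated by its squarefree degree-$j$ part, but it also contains non-squarefree degree-$j$ elements obtained by multiplying lower-degree generators of $I_{\D}$ by variables. Thus this equivalence is not a tautology, and I expect it to be the main obstacle. My approach is to invoke Hochster's formula for the multigraded Tor modules of a squarefree monomial ideal: since $\Tor^{S}_{*}(I_{\D},k)$ is supported entirely in squarefree multidegrees, the linear strand of $(I_{\D})_{\la j\ra}$ can be read off from the squarefree part $I_{\D^{(j)}}$, yielding the desired equivalence. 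An alternative route would apply the results of Section~\ref{CLmods} to the short exact sequence $0\to I_{\D^{(j)}}\to(I_{\D})_{\la j\ra}\to Q\to 0$, together with induction on $j$ so that $Q$ is controlled by previously analyzed strands of $I_{\D}$ in smaller degrees.
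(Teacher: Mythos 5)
You should note at the outset that the paper offers no proof of Theorem \ref{SCMcomplexes}; it is imported verbatim from \cite{HH}, so there is no internal argument to compare against. That said, the first half of your reduction is correct and is in fact the standard skeleton of the Herzog--Hibi proof: the complexes $\D^{(j)}$ are well defined, their minimal non-faces are exactly the $j$-element non-faces of $\D$, so $I_{\D^{(j)}}$ is the ideal generated by the squarefree degree-$j$ monomials of $I_\D$, and the duality $(\D^{(j)})^{*}=\D^{*}(n-j-1)$ checks out, so Eagon--Reiner (Theorem \ref{CMcomplex}) applied to each $\D^{(j)}$ converts ``$\D^*$ is sequentially Cohen--Macaulay'' (in the paper's Duval-style definition via the pure skeleta $\D^*(q)$) into ``each squarefree strand of $I_\D$ has a linear resolution.''

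The genuine gap is exactly the step you flag: passing from the squarefree strands $I_{\D^{(j)}}$ to the full components $(I_\D)_{\la j\ra}$. This is not a technicality one can wave at with Hochster's formula --- it is the main content of the theorem beyond Eagon--Reiner. Hochster's formula computes the multigraded Betti numbers of the \emph{squarefree} ideal $I_\D$ and says nothing direct about the minimal resolution of $(I_\D)_{\la j\ra}$, which is not squarefree (it contains $\mathfrak{m}\cdot(I_\D)_{\la j-1\ra}$), so its Betti numbers are not supported in squarefree multidegrees and its linear strand cannot simply be ``read off'' from $I_{\D^{(j)}}$. The equivalence you need is precisely Herzog and Hibi's proposition that a squarefree monomial ideal is componentwise linear if and only if all its squarefree components have linear resolutions; its proof requires the decomposition $(I_\D)_{\la j\ra}=I_{\D^{(j)}}+\mathfrak{m}(I_\D)_{\la j-1\ra}$, the lemma that $\mathfrak{m}J$ has a linear resolution whenever $J$ does, control of the intersection of the two summands, and an induction on $j$ --- essentially your ``alternative route,'' which you state but do not carry out. (A rigorous version of your squarefree-multidegree intuition does exist, via Yanagawa's theory of squarefree modules, but invoking it is a substantive argument, not a corollary of Hochster's formula.) As written, the proposal is a correct reduction with an acknowledged but unfilled hole at the decisive step.
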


In light of Proposition \ref{CLimpliesK2} we obtain the following sufficient condition for $k[\D]$ to be a $\K_2$ algebra.

\begin{cor}
\label{SCMimpliesK2}
If $\Delta^*$ is sequentially Cohen-Macaulay over $k$, then $k[\D]$ is a $\K_2$ algebra.
\end{cor}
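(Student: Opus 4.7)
The plan is to chain together three results already established in the paper. First I would invoke the Herzog--Hibi theorem (Theorem \ref{SCMcomplexes}) to translate the hypothesis: since $\Delta^*$ is sequentially Cohen--Macaulay over $k$, the Stanley--Reisner ideal $I_\Delta$ has a componentwise linear resolution as a module over the polynomial ring $S = k[x_1,\ldots,x_n]$.

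Next, observe that $S$ is a Koszul algebra (indeed, it is a quadratic algebra whose Yoneda algebra is the exterior algebra on $n$ generators concentrated in diagonal bidegree). Therefore Proposition \ref{CLimpliesK2} applies to the $S$-module $I_\Delta$ and yields that $I_\Delta$ is a $\K_2$ $S$-module.

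Finally, I would apply Theorem \ref{K2FaceRings} to the ideal $I_\Delta \subset S$: since $I_\Delta$ is a $\K_2$ $S$-module, the quotient $k[\Delta] = S/I_\Delta$ is a $\K_2$ algebra. This is the desired conclusion.

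There is essentially no obstacle here, since every non-trivial ingredient has already been set up earlier in the paper. The only point worth commenting on is that Theorem \ref{K2FaceRings} is applicable because the trivial left $S/I_\Delta$-action on $\Ext_S(S/I_\Delta,k)$ is automatic from the commutativity of $S$ (as noted in its proof), so the hypothesis of Theorem \ref{factorThm} is satisfied without further verification. In short, this corollary is just the composite \emph{$\Delta^*$ sequentially CM $\Rightarrow$ $I_\Delta$ has a componentwise linear $S$-resolution $\Rightarrow$ $I_\Delta$ is $\K_2$ over $S$ $\Rightarrow$ $k[\Delta]$ is $\K_2$}.
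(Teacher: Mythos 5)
Your proof is correct and is exactly the chain the paper itself uses: Theorem \ref{SCMcomplexes} to get a componentwise linear resolution, Proposition \ref{CLimpliesK2} (with $S$ Koszul) to conclude $I_\Delta$ is a $\K_2$ $S$-module, and Theorem \ref{K2FaceRings} to pass to the quotient. Your added remarks about the Koszulity of $S$ and the automatic triviality of the $B$-action are accurate and harmless.
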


\begin{proof}
The statement follows from Theorem \ref{SCMcomplexes}, Proposition \ref{CLimpliesK2}, and Theorem \ref{K2FaceRings}.

\end{proof}

Example \ref{K2notCL} below shows the sufficient condition of Corollary \ref{SCMimpliesK2} is not necessary. For examples where $k[\D]$ is a $\K_2$ algebra and $I_{\D}$ is not even a $\K_2$ module, see Example \ref{TowerCounter} and the algebra $k[\D_7]$ of Section \ref{families}.

A simplicial complex $\D$ is called \emph{Buchsbaum over $k$} if $\D$ is pure and if for all non-empty faces $\t\in\D$ and all $i<\dim \lk_{\D} \t$ we have $\widetilde{H}_i(\lk_{\D}\t, k)=0$. Thus $\D$ is Buchsbaum over $k$ if $\D$ is Cohen-Macaulay over $k$.  In Section \ref{families}, the dual complex $\D_7^*$ is pure, but the link of vertex $\{d\}$ in $\D_7^*$ is two-dimensional and disconnected, so $\D_7^*$ is not Buchsbaum over any $k$. 

Nonetheless, the statement of Corollary \ref{SCMimpliesK2} is false if ``sequentially Cohen-Macaulay'' is replaced by ``Buchsbaum over $k$.''  One counterexample is the face ring $k[\D'_6]$ discussed in Section \ref{families}.
 
Since Cohen-Macaulay and sequentially Cohen-Macaulay  complexes can be characterized in terms of their combinatorial topology, we hope to establish geometric criteria on $\D^*$ under which $I_{\Delta}$ is a $\K_2$ module. 

%\begin{conj}
%
%\end{conj}

\section{Examples}
\label{examples}

For $A$ a graded algebra, we use the shorthand $A(d_1^{i_1}, d_2^{i_2}, \ldots, d_n^{i_n})$ to denote the graded free $A$-module $A(d_1)^{\oplus i_1}\oplus A(d_2)^{\oplus i_2}\oplus \cdots \oplus A(d_n)^{\oplus i_n}$. 

The first example shows that submodules of $\K_2$ modules need not be $\K_2$. Thus we do not expect an analog of Lemma 3.2(3) for $\K_1$ or $\K_2$ modules. The calculation is also used in Example \ref{TowerCounter} below.

\begin{example}
\label{submodule}
Let $S=k[a,b,c,d,e,f]$ and  $I=\la abc, cde, ae\ra$. The complex $\D^*$ is shown in Figure \ref{submodulefig}. It is easy to check that $\D^*(q)$ is Cohen-Macaulay for $q=0,1,2,3$. By  Theorem 2.1 of \cite{HH}, $I$  has a componentwise linear $S$-module resolution, hence is a $\K_2$ $S$-module by Proposition \ref{CLimpliesK2}. The subideal $J=\la abc, cde\ra$ is not a $\K_2$ $S$-module. Indeed the following complex is a minimal projective $S$-module resolution of $_SJ$.

$$0\rightarrow S(-5)\xrightarrow{\begin{pmatrix} de & -ab\\ \end{pmatrix}}S(-3,-3)\xrightarrow{\begin{pmatrix} abc\\ cde\\ \end{pmatrix}} J\rightarrow 0$$
We see that the matrix criterion of Lemma \ref{K2matrix} is not satisfied by this resolution.

\begin{figure}[ht] %  figure placement: here, top, bottom, or page
   \centering
   \includegraphics[width=2in]{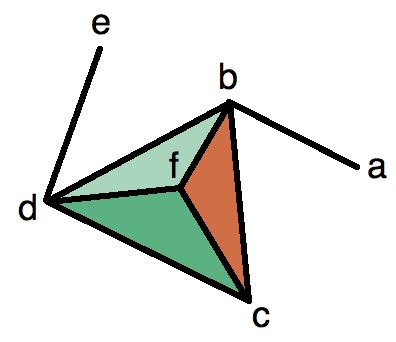} 
   \caption{The complex $\D^*$ for $I$ in Example \ref{submodule}.}
         \label{submodulefig}
\end{figure}
%It is also nice to consider $I=\la abc, cd, def\ra$.
\end{example}

Next, we give an example  of a face ring which is $\K_2$ but not sequentially Cohen-Macaulay.
 
\begin{example}
\label{K2notCL}
Let $S=k[a,b,c,d,e,f]$ and let $I=\la abc, def, abef\ra$. The complex $\D^*$ and the subcomplex $\D^*(2)$ are shown in Figure \ref{K2notCLfig}. Since $\D^*(2)$ is two-dimensional and disconnected, it is not Cohen-Macaulay.
By Theorem \ref{SCMcomplexes}, $I$ does not have a componentwise linear $S$-module resolution. The following complex is a minimal projective $S$-module resolution of $_SI$.
$$0\rightarrow S(-5,-5)\xrightarrow{\begin{pmatrix}  ef & 0 &-c\\  0 & ab & -d\\ \end{pmatrix}}S(-3,-3,-4)\xrightarrow{\begin{pmatrix} abc\\def\\abef\\ \end{pmatrix}} I\rightarrow 0$$

\begin{figure}[ht] %  figure placement: here, top, bottom, or page
   \centering
   \includegraphics[width=3in]{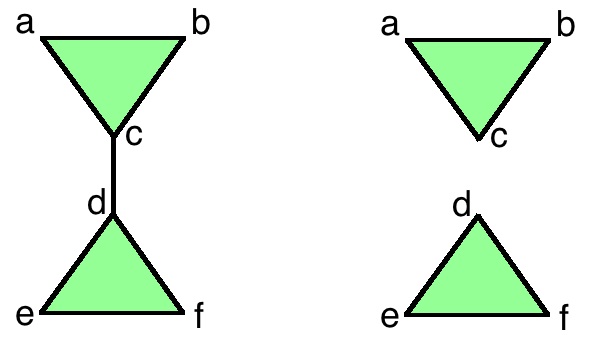} 
   \caption{The complex $\D^*$ for $I$ in Example \ref{K2notCL}  and $\D^*(2)$.}
      \label{K2notCLfig}
\end{figure}
By Proposition \ref{K2matrixCond}, $_SI$ is a $\K_2$ $S$-module. Thus $S/I$ is a $\K_2$ algebra by Theorem \ref{K2FaceRings}.
The subideal $J=\la abc, abef\ra$ is easily seen to be a $\K_2$ $S$-module. However, the factor module $I/J$ is not. The complex
$$0\rightarrow S(-5)\xrightarrow{\begin{pmatrix}ab\end{pmatrix}}S(-3)\xrightarrow{\begin{pmatrix}def\\ \end{pmatrix}}I/J\rightarrow 0$$
%\newpage
is a minimal free $S$-module resolution of $I/J$, and it  fails the matrix condition of Lemma \ref{K2matrix}.
\end{example}

Our final example in this section shows that a face ring can be $\K_2$ even if the Stanley-Reisner ideal $I_{\D}$ is not a $\K_2$ $S$-module. This provides a counterexample to the converse of Theorem \ref{factorThm}.

\begin{example}
\label{TowerCounter}
Let $S=k[a,b,c,d,e]$ and let $I=\la abc, cde, abde\ra$. Let $A=S/I$ and let $B=A/\la c\ra$.
 Observe that  $B\cong k[a,b,d,e]/\la abde\ra$.  By Corollary 9.2 of \cite{CS}, $B$ is a $\K_2$ algebra. The following sequence is the beginning of a minimal projective $A$-module resolution of $_AB$. From this description it is clear how to continue the resolution.
$$\cdots\xrightarrow{\begin{pmatrix} \alpha  & 0 & 0\\  0 & \alpha & 0\\ 0 & 0 & \gamma\\ 0 & 0 & \beta\\ \end{pmatrix}} A(-4^2, -5^2)\xrightarrow{\begin{pmatrix} \gamma\\ \beta' \\ \end{pmatrix}}A(-3^2)\xrightarrow{\begin{matrix}\alpha\\ \end{matrix}}A(-1) \xrightarrow{\begin{pmatrix} c\\ \end{pmatrix}} A\rightarrow B$$
where
$$\alpha = \begin{pmatrix} ab\\ de\\ \end{pmatrix}\quad \beta=\begin{pmatrix} ab & 0\\ 0 & de\\ \end{pmatrix}\quad \beta'=\begin{pmatrix} de & 0\\ 0 & ab\\ \end{pmatrix}\quad \gamma = \begin{pmatrix} c & 0\\ 0 & c\\ \end{pmatrix}$$
 By the matrix criterion of Lemma \ref{K2matrix}, $B$ is a $\K_2$ $A$-module. By Theorem 7.4 of \cite{CS}, $A$ is a $\K_2$ algebra.

The following is a minimal projective $S$-module resolution of the defining ideal $I$ of $A$.
$$0\rightarrow S(-5^2)\xrightarrow{\begin{pmatrix} de & 0 & -c\\ 0 & ab &-c\\ \end{pmatrix}} S(-3^2,-4)\xrightarrow{\begin{pmatrix} abc\\ cde\\ abde\\ \end{pmatrix}} I\rightarrow 0$$

As the linearization of the matrix on the left has dependent rows, $I$ is not a $\K_2$ $S$-module. Thus the converse of Theorem \ref{factorThm} is false. 

The algebra $A$ is a noteworthy example in another regard. Let $J$ be an ideal in a commutative Koszul algebra $S$. Suppose $J$ has a componentwise linear $S$-module resolution. Then by Theorem \ref{factorThm}, $S/J$ is a $\K_2$ algebra. The subideal $J_{\la 0, d\ra}$ has a componentwise linear resolution for all $d\ge 0$, so $S/J_{\la 0,d\ra}$ is a  $\K_2$ algebra for all $d\ge 0$.  Cassidy and Shelton asked if such a ``tower'' theorem holds more generally for $\K_2$ algebras. The algebra $A$ resolves that question negatively, as we now show.

Let $C=k[a,b,c,d,e]/\la abc, cde\ra$ be the algebra obtained by discarding the highest degree generator of $I$.

\begin{prop}
The algebra $C$ is not $\K_2$.
\end{prop}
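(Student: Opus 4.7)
The plan is to exhibit a nonzero class in $E^{3,5}(C)$; since $E^1(C)$ is concentrated in internal degree $1$ and $E^2(C)=E^{2,2}(C)\oplus E^{2,3}(C)$, no nonzero Yoneda product in $E^1(C)\star E^2(C)+E^2(C)\star E^1(C)$ lies in bidegree $(3,5)$, so such a class disproves the $\K_2$ property.

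The beginning of a minimal $C$-free resolution of ${}_Ck$ is
\[
\cdots \to C(-2)^{10}\oplus C(-3)^2 \xrightarrow{M_2} C(-1)^5 \xrightarrow{(a,b,c,d,e)} C \to k,
\]
where the ten degree-two generators are the Koszul syzygies $\sigma_{ij}$ ($1\le i<j\le 5$) and the two degree-three generators are $\tau_1=(bc,0,0,0,0)^T$ and $\tau_2=(0,0,0,0,cd)^T$, reflecting the relations $abc=0$ and $cde=0$. From this one reads off $E^{1,1}(C)=k^5$, $E^{2,2}(C)=k^{10}$, $E^{2,3}(C)=k^2$, and the other relevant bigraded pieces of $E^1(C)$ and $E^2(C)$ vanish. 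Next, consider the element $\theta := de\cdot\tau_1$ of the second syzygy module. Its image under $M_2$ is $(bcde,0,0,0,0)^T$, which vanishes in $C(-1)^5$ because $bcde=b\cdot cde=0$. Hence $\theta$ is a degree-five element of $\ker M_2$ and represents a class in $E^{3,5}(C)$.

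To conclude, I would verify that $\theta$ is a minimal generator of $\ker M_2$, that is, $\theta\notin C_+\cdot\ker M_2$. Suppose $\theta=c\cdot\eta$ with $c\in C_+$ and $\eta\in\ker M_2$ of internal degree strictly less than $5$, so $\deg c\in\{1,2\}$. For $c\in C_1$, the $\tau_1$-component condition $c\cdot\eta^{\tau_1}=de$ in $C_2$ forces $c\in\{d,e\}$ with $\eta^{\tau_1}\in\{e,d\}$ respectively; and the vanishing of the $\sigma_{ij}$-components of $M_2(c\eta)$ forces each $\eta^{\sigma_{ij}}\in\hbox{ann}_C(c)\cap C_2$, which equals $k\cdot ce$ when $c=d$ and $k\cdot cd$ when $c=e$. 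Writing out the condition $\eta\in\ker M_2$ coordinate by coordinate and comparing monomials in $C_4$ shows the resulting linear system in the scalars $\lambda_{ij}$ is inconsistent: at position $2$, for example, the monomial $ace$ (respectively $acd$) appears with no available cancellation, since none of these monomials lies in $J_4$. The case $c\in C_2$ with $\eta$ of degree $3$ reduces analogously to requiring $\tau_1$ itself to lie in $\ker M_2$ modulo the degree-three higher Koszul syzygies, which plainly fails. The main obstacle is this coordinate-wise case analysis, but it is elementary once the annihilators $\hbox{ann}_C(d),\hbox{ann}_C(e)$ and the monomial basis of $J_4$ are spelled out. Therefore $[\theta]$ is a nonzero class in $E^{3,5}(C)$, and $C$ is not a $\K_2$ algebra.
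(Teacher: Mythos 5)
Your overall strategy---producing an explicit nonzero class in $E^{3,5}(C)$ from the minimal resolution of ${}_Ck$---is sound and genuinely different from the paper's proof, which gets $\dim\Ext_C^{3,5}(k,k)=1$ indirectly by inverting the Hilbert series, identifying the diagonal of $E(C)$ with an exterior algebra, and running a change-of-rings spectral sequence to pin down $\dim\Ext_C^{4,5}(k,k)$. Your setup is correct through the observation that $\theta=de\cdot\tau_1$ is a degree-$5$ element of $\ker M_2$, and the degree count showing that $D_2^3(C)$ lives in internal degrees at most $4$ is right. The gap is in the minimality step. Membership in $C_+\cdot\ker M_2$ means $\theta=\sum_i c_i\eta_i$ with $c_i\in C_+$ and $\eta_i\in\ker M_2$; your case analysis only excludes $\theta$ being a \emph{single} product $c\cdot\eta$, and in particular forces each $\sigma_{ij}$-coefficient of $\eta$ into $\hbox{ann}_C(c)$. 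Sums genuinely matter here: with $\sigma_{14}\mapsto d\,e_1-a\,e_4$ one checks that $d\tau_1-bc\,\sigma_{14}\in(\ker M_2)_4$, so
\[
e\cdot\bigl(d\tau_1-bc\,\sigma_{14}\bigr)=de\,\tau_1-bce\,\sigma_{14}\ \in\ C_+\ker M_2,
\]
and hence $\theta\equiv bce\,\sigma_{14}$ modulo $C_+\ker M_2$. Your argument never confronts such relations, so it cannot decide whether this residual element is itself a sum of products; as written, the proof does not establish that $[\theta]\neq 0$.

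The claim is nevertheless true and your route can be completed. The clean way is to use the $\Z^5$-multigrading: $\theta$ sits in multidegree $(1,1,1,1,1)$, where $(F_2)_{\mathbf 1}$ is spanned by the eight nonzero elements $m_{ij}\sigma_{ij}$ (with $m_{ij}$ the complementary squarefree cubic; note $cde\,\sigma_{12}=abc\,\sigma_{45}=0$) together with $de\,\tau_1$ and $ab\,\tau_2$, the kernel of $M_2$ there is $9$-dimensional, and $(C_+\ker M_2)_{\mathbf 1}=\sum_v x_v(\ker M_2)_{\mathbf 1-x_v}$ is computed from five small Koszul-type kernels to be an $8$-dimensional subspace on which the linear functional summing the coefficients of $bde\,\sigma_{13},ade\,\sigma_{23},bce\,\sigma_{14},bcd\,\sigma_{15},ace\,\sigma_{24},acd\,\sigma_{25},de\,\tau_1$ and subtracting that of $ab\,\tau_2$ vanishes. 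That functional takes the value $1$ on $\theta$, so $[\theta]\neq 0$ in $E^{3,5}(C)$ and $C$ is not $\K_2$. So the approach is salvageable, and arguably more concrete than the paper's, but the coordinatewise case analysis you propose is set up for the wrong membership condition and would not close the argument.
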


\begin{proof}
By the usual identifications of $E^1(C)$ and $E^2(C)$ respectively with the $k$-linear duals of the vector spaces of generators and minimal defining relations of $C$, we have $E^1(C)=\Ext_C^{1,1}(k,k)$ and $E^2(C)=\Ext_C^{2,2}(k,k)\oplus\Ext_C^{2,3}(k,k)$. % where $\dim \Ext_C^{2,3}(k,k)=2$. 
We prove the proposition by showing that $\Ext^{3,5}_C(k,k)\neq 0$.

The Hilbert series of the defining ideal $J=\la abc, cde\ra$ is easily seen to be $2t^3+10t^4+29t^5+\cdots$ thus the Hilbert series of $C$ is $$\dfrac{1}{(1-t)^5}-(2t^3+10t^4+29t^5+\cdots)=1+5t+15t^2+33t^3+60t^4+97t^5+\cdots$$
Inverting the formal power series, we find the Poincare series of $C$ is
$$1-5t+10t^2-8t^3-5t^4+18t^5+\cdots$$
Since $J_2=0$, the diagonal subalgebra $\bigoplus \Ext_C^{i,i}(k,k)$ of $E(C)$ is isomorphic to $E(S)$, which is the exterior algebra on a 5-dimensional vector space (see Proposition 3.1 of \cite{PP}). In particular, $\dim\Ext_C^{2,2}(k,k)=10$ and $\dim \Ext_C^{5,5}(k,k)=1$. %By the identification of $E^2(C)$ mentioned above, we have $\dim \Ext_C^{2,3}(k,k)=2$. The Hilbert series calculations show that $\dim \Ext_C^{3,4}(k,k)=10$.
%The following complex is a minimal projective $S$-module resolution of $J$.
%$$0\rightarrow S(-5)\xrightarrow{\begin{pmatrix} de & -ab\\ \end{pmatrix}}
%S(-3)^2\xrightarrow{\begin{pmatrix} abc\\ cde\\ \end{pmatrix}}\rightarrow J\rightarrow 0$$
From the minimal $S$-module resolution of $J$ given in Example \ref{submodule}, we see that $$\Ext_S^1(C,k)\isom \Ext_S^0(J,k)=k(-3)\oplus k(-3)$$  
$$\Ext_S^2(C,k)\isom\Ext_S^1(J,k)=k(-5)$$ and $\Ext_S^q(C,k)=0$ for $q>2$.
%\newpage

Consider the spectral sequence $\Ext_C^p(k,\Ext_S^q(C,k))\Rightarrow \Ext_S^{p+q}(k,k)$. By Lemma \ref{triviality}, $E_2^{p,q}\isom E^p(C)\otimes \Ext_S^q(C,k)$. 
Every element in $\Ext_C^{4,5}(k,k)$ represents a permanent cocycle in $E_2^{4,0}$. 
Since the target of the spectral sequence is the Yoneda algebra of a Koszul algebra, each of these cocycles must be eventually bounded. The differentials which could bound these cocycles are
$$E_2^{2,1}\rightarrow E_2^{4,0}\qquad E_3^{1,2}\rightarrow E_3^{4,0}\quad \text{and}\quad E_4^{0,3}\rightarrow E_4^{4,0}$$
Since $\Ext_S^3(C,k)=0$, $E_4^{0,3}=0$. Since $E_3^{1,2}\subset E_2^{1,2}\isom E^1(C)\otimes \Ext_S^2(C,k)$, the vector space $E_3^{1,2}$ is concentrated in internal degree 6, so it cannot bound a cocycle with internal degree 5. Thus $E_2^{4,0}\isom E_2^{2,1}\isom E^2(C)\otimes \Ext_S^1(C,k)$ in internal degree 5. From the calculations above, we have $\dim \Ext_C^{4,5}(k,k) = 20$. Since $\dim \Ext_C^{5,5}(k,k)=1$, it follows from the Poincare series that $\dim \Ext_C^{3,5}(k,k)=1$.

\end{proof}

 \end{example}

\begin{rmk}
If $A=T(V)/R$ is a graded $k$-algebra as in Section \ref{factorK2}, the \emph{quadratic part} of $A$ is the algebra $qA=T(V)/I$ where $I$ is the ideal generated by $R\cap T^2(V)$. A weaker version of the Cassidy-Shelton question is: Is $qA$ is a Koszul algebra whenever $A$ is a $\K_2$ algebra? The answer to this question is also no, as demonstrated by the algebra $A=k\la a,b,c,d,e,f,g,h,k\ra/\la ag, be-gh, cd-ef, dk, abc\ra$. We leave the verification of this counterexample as a straightforward exercise in applying Lemma \ref{K2matrix}.

\end{rmk}

\bigskip

\section{Two families of face rings}
\label{families}

In this section we discuss the $\K_2$ property for two families of face rings.

Fix $n\ge 3$ and let $S_n=k[x_1,\ldots,x_n]$. Let $G_n= \{ x_ix_{i+1}x_{i+2}\ |\ i=1,\ldots n-2\}$.  Let $I_{\D_n}=\la G_n\ra$ and $I_{\D'_n}=\la G_n\cup \{x_1x_2x_n, x_1x_{n-1}x_n\}\ra$.  Let $k[\D_n]=S_n/I_{\D_n}$ and $k[\D'_n]=S_n/I_{\D'_n}$.

For $3\le n<6$, $\D_n^*$ and $(\D'_n)^*$ are readily seen to be Cohen-Macaulay over any field $k$, hence $k[\D_n]$ and $k[\D'_n]$ are $\K_2$ algebras for $3\le n< 6$ by Theorem \ref{SCMimpliesK2}. The complex $\D_6^*$ is also easily seen to be Cohen-Macaulay over any field $k$, so $k[\D_6]$ is $\K_2$ as well.

The complex $(\D'_6)^*$ was considered in \cite{Hibi} (Example 26.2). The link of each vertex in $(\D'_6)^*$ is contractible, thus $(\D_6')^*$ is Buchsbaum over any field. A computer calculation using the program Macaulay2 shows  $\dim E^{4,6}(k[\D_6'])=37$.  By the usual identification of $E^2(k[\D_6'])$  with the $k$-linear dual of the vector space of minimal defining relations of the algebra $k[\D_6']$, we have $\dim E^{2,3}(k[\D_6'])=6$ and $E^{2,4}(k[\D_6'])=0$. It is therefore impossible to generate $E^{4,6}(k[\D_6'])$ algebraically using $E^1(k[\D_6'])=E^{1,1}(k[\D_6'])$ and $E^2(k[\D_6'])$, so $k[\D_6']$ is not $\K_2$. 

For the remainder of this section we consider the case $n=7$. For convenience, we change notation and let $A=k[a,b,c,d,e,f,g]$ and $B=A/\la abc, bcd, cde, def \ra$. Let $\D=\D_7$. Tensor products of $\mathcal K_2$ algebras are $\mathcal K_2$ (see \cite{CS}), so $B$ is a $\mathcal K_2$ algebra. Let $J=\la abc, bcd, cde, def, efg \ra$ and $C=A/J$ so $C\isom k[\D]$.

\begin{lemma}
\label{bettiNos}
As a left $A$-module, $J$ does not have a linear free resolution. More precisely,
$$\dim \Ext_A^{i,j}(C,k)=\begin{cases} 1 & (i,j)=(0,0)\\ 5 & (i,j)=(1,3)\\ 4 &(i,j)=(2,4)\\ 1 & (i,j)=(2,6)\\ 1 & (i,j)=(3,7)\\ 0 & else\\ \end{cases}$$
\end{lemma}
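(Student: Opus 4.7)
The plan is to compute the minimal graded free resolution of $C=A/J$ as a left $A$-module explicitly; the nonlinearity assertion will follow immediately from the appearance of a Betti number off the linear strand. Since $J$ is generated by the five cubic monomials $m_1=abc$, $m_2=bcd$, $m_3=cde$, $m_4=def$, $m_5=efg$, the first two terms of the resolution are $A\to C$ and $A(-3)^{\oplus 5}\to J$, giving $\beta_{0,0}=1$ and $\beta_{1,3}=5$. Writing $u_1,\ldots,u_5$ for the standard basis of $A(-3)^{\oplus 5}$, I would then work with the ten Taylor syzygies $S_{ij}=(m_j/\gcd(m_i,m_j))\,u_i-(m_i/\gcd(m_i,m_j))\,u_j$ and cull the redundant ones.

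For the second syzygies, the four consecutive pairs produce linear syzygies $S_{12},S_{23},S_{34},S_{45}$ of degree $4$. The three distance-two Taylor syzygies reduce immediately, e.g.\ $S_{13}=e\cdot S_{12}+a\cdot S_{23}$, and the two distance-three syzygies $S_{14},S_{25}$ reduce via combinations of three consecutive $S_{i,i+1}$, e.g.\ $S_{14}=ef\cdot S_{12}+af\cdot S_{23}+ab\cdot S_{34}$. The remaining Taylor syzygy $S_{15}=efg\cdot u_1-abc\cdot u_5$ has multidegree $\{a,b,c,e,f,g\}$. The key observation is that every other Taylor syzygy has multidegree containing the variable $d$; since multidegree is preserved under $A$-linear combinations, $S_{15}$ cannot lie in the submodule generated by the other $S_{ij}$. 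This yields $\beta_{2,4}=4$ and $\beta_{2,6}=1$, with no other contributions at homological degree 2.

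For the third syzygies, I would solve the system $\sum \alpha_{ij}S_{ij}=0$ coefficient-by-coefficient on $u_1,\ldots,u_5$. The equation on $u_1$ reads $\alpha_{12}d+\alpha_{15}efg=0$, which forces $\alpha_{15}=d\beta$ and $\alpha_{12}=-\beta efg$ for some $\beta\in A$, since $A$ is a UFD and $\gcd(d,efg)=1$. Propagating through the equations on $u_2,u_3,u_4$ determines $\alpha_{23}=-\beta afg$, $\alpha_{34}=-\beta abg$, $\alpha_{45}=-\beta abc$ uniquely, and the equation on $u_5$ is then automatically satisfied. Thus the third syzygy module is generated by a single relation of total degree $7$ (its coefficient vector consists of degree-$3$ entries against the four degree-$4$ linear syzygies and a degree-$1$ entry against the degree-$6$ basis element for $S_{15}$), giving $\beta_{3,7}=1$. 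A final trivial argument shows no fourth syzygy exists: any $\alpha\in A$ annihilating the coefficient vector $(-efg,-afg,-abg,-abc,d)$ must satisfy $\alpha d=0$ in the domain $A$, hence $\alpha=0$.

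The main obstacle is the nonredundancy argument for $S_{15}$, and more broadly ensuring that the multigraded bookkeeping is sound. The entire calculation rests on the fact that the lcm of the endpoint generators $m_1=abc$ and $m_5=efg$ avoids the connecting variable $d$, so the natural path of linear syzygies through $m_2,m_3,m_4$ cannot produce $S_{15}$. This is precisely the combinatorial obstruction that prevents $J$ from having a linear resolution, and once $\beta_{2,6}\ne 0$ is established the first assertion of the lemma follows.
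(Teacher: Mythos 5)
Your computation is correct and reaches the stated Betti numbers, but by a genuinely different route from the paper. The paper does not touch the Taylor complex at all: it invokes the Eagon--Reiner/Hochster formula
$\dim \Ext_A^{i,j}(C,k) = \sum_{\sigma\in\D^*,\ |\sigma|=n-j} \dim \widetilde{H}_{i-2}(\lk_{\D^*}(\sigma),k)$,
reads off $\beta_{1,3}=5$ from the five facets of $\D^*$, finds $\beta_{2,4}=4$ and $\beta_{2,6}=1$ from the five faces ($abc$, $abg$, $afg$, $efg$, and $d$) whose links are homotopic to two points, and gets $\beta_{3,7}=1$ from $\D^*\simeq S^1$. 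Your approach instead builds the minimal resolution by hand from the Taylor syzygies, and its two load-bearing steps are sound: the multidegree argument for the irredundancy of $S_{15}$ is exactly right (every other $S_{ij}$ has $d$ in its multidegree, while $\operatorname{lcm}(abc,efg)=abcefg$ does not, and the syzygy module is $\Z^7$-graded), and the coefficient-by-coefficient solution of $\sum\alpha_{ij}S_{ij}=0$ correctly pins down the unique third syzygy $(-efg,-afg,-abg,-abc,d)$ in total degree $7$. You should state explicitly that the ten Taylor syzygies generate the full first syzygy module (this is the standard fact that the Taylor complex is a, generally non-minimal, free resolution of a monomial ideal); with that citation in place the argument is complete. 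What each approach buys: yours is self-contained commutative algebra, produces the actual differentials (which could be reused elsewhere), and makes the obstruction to linearity concrete --- the $d$-free syzygy $S_{15}$ that the chain of linear syzygies through $m_2,m_3,m_4$ cannot reach; the paper's is shorter given the Alexander-duality machinery already running in that section, and fits the combinatorial-topological theme of identifying which links of $\D^*$ fail to be connected.
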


\begin{proof}
Clearly, $\Ext_A^0(C, k)=\Ext_A^{0,0}(C,k)=k$. By the Eagon-Reiner Theorem \cite{ER}, for $i\ge 1$ we have
$$\dim \Ext_A^{i,j}(C,k) = \sum_{\substack{\sigma\in \D^*\\ |\sigma|=n-j\\ }} \dim \widetilde{H}_{i-2}(\lk_{\D^*}(\sigma), k)$$
The facets of $\D^*$ are $abcd, abcg, abfg, aefg, defg$. The link of a simplex is the irrelevant complex if and only if the simplex is a facet, so $\Ext_A^1(C,k)=\Ext_A^{1,3}(C,k)\isom k^5$. The link complexes which are disconnected are $\lk_{\D^*}(\sigma)$ for $\sigma\in\{ abc, abg, afg, efg, d\}$. The realization of each of these complexes is homotopic to two points. This verifies the dimensions of the graded components of  $\Ext_A^2(C,k)$. The realizations of all other link complexes are contractible, except $\lk_{\D^*} \{\emptyset\}=\D^*\simeq S^1$. Thus $\Ext_A^3(C,k)=\Ext_A^{3,7}(C,k)=k$ and $\Ext_A^i(C,k)=0$ for $i>3$.

\end{proof}

\begin{lemma}
\label{vanish}
$E^{3,6}(C)=E^{4,7}(C)=0$
\end{lemma}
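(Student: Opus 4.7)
The approach is to apply the Cartan--Eilenberg change--of--rings spectral sequence
$$E_2^{p,q} = \Ext_C^p(k,\Ext_A^q(C,k)) \Rightarrow \Ext_A^{p+q}(k,k)$$
and combine three structural facts. First, since $A = k[x_1,\ldots,x_7]$ is commutative, Lemma~\ref{triviality} identifies $E_2^{p,q} \cong \Ext_A^q(C,k) \otimes E^p(C)$ as bigraded $E(A)$--$E(C)$ bimodules. Second, since $A$ is Koszul, the abutment $E^n(A) = E^{n,n}(A)$ is concentrated on the diagonal, so by Lemma~\ref{Einfinity}(2) every class surviving to $E_\infty^{p,q}$ lies in internal degree $p+q$. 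Third, Lemma~\ref{bettiNos} gives the internal degree support of $\Ext_A^q(C,k)$ for $q=0,1,2,3$ as $\{0\},\{3\},\{4,6\},\{7\}$ respectively, with $\dim\Ext_A^{2,6}(C,k) = \dim\Ext_A^{3,7}(C,k) = 1$, and since the defining relations of $C$ are all cubic, $E^{2,j}(C) = 0$ for $j \ne 3$.

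For $E^{3,6}(C) = 0$: a class would live in $E_2^{3,0}$ in internal degree $6$ as a permanent cocycle, and must therefore be killed by an incoming differential since $E_\infty^{3,0}$ embeds in $E^{3,6}(A) = 0$. Internal--degree bookkeeping eliminates $d_2^{1,1}$ (whose source $E_2^{1,1}$ lies in internal degree $4$) and leaves only $d_3^{0,2}$, whose source is a subquotient of the one--dimensional $\Ext_A^{2,6}(C,k)$. Hence it suffices to verify that the generator $\xi$ of this space does not survive to $E_3$, i.e.\ $d_2^{0,2}(\xi) \ne 0$ in $\Ext_A^{1,3}(C,k) \otimes E^{2,3}(C)$. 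Geometrically $\xi$ is dual to the disconnected link $\lk_{\D^*}\{d\} = \{abc\} \sqcup \{efg\}$, and algebraically it is the Taylor syzygy $efg \cdot [abc] - abc \cdot [efg]$ between the two ``far apart'' generators of $J$; an explicit staircase computation along the Koszul resolution of $A$ then identifies $d_2^{0,2}(\xi)$, up to nonzero scalars, with $[abc]^{*} \otimes [efg] - [efg]^{*} \otimes [abc]$, which is visibly nonzero.

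For $E^{4,7}(C) = 0$: the same strategy requires classes in $E_2^{4,0}$ in internal degree $7$ to be killed by $d_2^{2,1}$, $d_3^{1,2}$, or $d_4^{0,3}$. The first is eliminated because $E_2^{2,1}$ lies in internal degree $6$. For the second, right $E(C)$--linearity of $d_2$ gives $d_2^{1,2}(\xi \star e) = d_2^{0,2}(\xi) \star e$ for $e \in E^{1,1}(C)$, and the explicit form of $d_2^{0,2}(\xi)$ together with a routine computation of the Yoneda pairing $E^{2,3}(C) \otimes E^{1,1}(C) \to E^{3,4}(C)$ shows the induced map is injective, so $E_3^{1,2}$ in internal degree $7$ vanishes. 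For the third, the generator of $\Ext_A^{3,7}(C,k) \cong k$ is dual to the fundamental cycle of $\D^* \simeq S^1$ and is (up to nonzero scalar) a Yoneda product $[\gamma] \star \xi$ for some $[\gamma] \in E^{1,1}(A)$; by left $E(A)$--linearity $d_2^{0,3}([\gamma] \star \xi) = [\gamma] \star d_2^{0,2}(\xi)$, which remains nonzero, so $E_3^{0,3}$ and hence $E_4^{0,3}$ in internal degree $7$ vanishes.

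The main technical obstacle is the explicit staircase computation of $d_2^{0,2}(\xi)$. This requires writing out the initial terms of a minimal $C$--projective resolution $Q^\bullet \to k$ together with the relevant piece of the $A$--injective resolution $I_\bullet$, and solving the hanging equation $a_1 \partial_Q^1 = -\partial_2^J a_2$ for a specific staircase $(a_1,a_2) \in \cS_2^{0,2}$ lifting $\xi$. I would organize the computation around the two disjoint triangles comprising $\lk_{\D^*}\{d\}$: pick the $A$--linear cochain dual to this two--chain, take a $C$--linear lift $a_2$, and read off the relevant coefficients of $a_2 \partial_Q^2$ using the cubic relations defining $C$. The subsequent verifications for the $d_3^{1,2}$ and $d_4^{0,3}$ arguments then reduce to straightforward linear--algebra checks using the explicit form of $d_2^{0,2}(\xi)$ so obtained.
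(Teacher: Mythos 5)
Your strategy is genuinely different from the paper's, and its skeleton is sound, but as written it has a real gap: the two computations on which everything hinges are described as plans rather than carried out. For $E^{3,6}(C)=0$ your argument reduces, correctly, to showing $d_2^{0,2}(\xi)\neq 0$ for the generator $\xi$ of $\Ext_A^{2,6}(C,k)$ --- note that without this you only get $\dim E^{3,6}(C)\le\dim(E_3^{0,2})_6\le 1$, and indeed if $d_2^{0,2}(\xi)=0$ then $d_3^{0,2}(\xi)\neq 0$ would force $E^{3,6}(C)\neq 0$, so the nonvanishing of $d_2^{0,2}(\xi)$ is exactly equivalent to the statement being proved and cannot be waved at. You explicitly defer this staircase computation (``the main technical obstacle''), and you likewise assert without verification that $(d_2^{1,2})_7$ is injective on the $7$-dimensional space $\xi\otimes E^{1,1}(C)$; that claim requires knowing that no nonzero $e\in E^{1,1}(C)$ annihilates both $[abc]^*$ and $[efg]^*$ under the Yoneda pairing $E^{2,3}(C)\otimes E^{1,1}(C)\to E^{3,4}(C)$, which is a nontrivial check, not a formality. (A small misstatement along the way: since $C$ is presented as a quotient of $T(V)$, its defining relations include the commutators, so $E^{2,2}(C)\neq 0$; fortunately you only need $E^{2,4}(C)=0$.)

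The paper avoids all of this by running the change-of-rings spectral sequence for the intermediate quotient $B\to C$, where $B=A/\la abc,bcd,cde,def\ra$ is already known to be $\K_2$ (as a tensor product) and the kernel $\la efg\ra$ is a principal ideal of $B$. Computing the start of a minimal $B$-resolution of that ideal shows $\Ext_B^{i}(C,k)$ for $i<4$ is supported in internal degrees $0,3,4,6$, so \emph{every} differential landing in $(E_\bullet^{3,0})_6$ and $(E_\bullet^{4,0})_7$ vanishes for degree reasons alone; hence $E^{3,6}(C)\hookrightarrow E^{3,6}(B)$ and $E^{4,7}(C)\hookrightarrow E^{4,7}(B)$, and both targets vanish because $B$ is $\K_2$ with relations in degrees $2$ and $3$. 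That choice of base ring is precisely what turns your hard explicit differential computations into bookkeeping; if you want to stay over $A$, you must actually exhibit the staircase for $\xi$ and the Yoneda products, and until you do, the proof is incomplete.
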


\begin{proof}
Consider the change-of-rings spectral sequence for $B\rightarrow C$. The kernel of this map is $K=\la efg\ra$. The following is the beginning of a minimal graded projective resolution for $_BK$.
$$\cdots B(-6^3)\xrightarrow{\begin{pmatrix} bc\\ ce\\ ef\\ \end{pmatrix}} B(-4)\xrightarrow{\begin{pmatrix}d\\ \end{pmatrix}} B(-3) \xrightarrow{\begin{pmatrix} efg\\ \end{pmatrix}} K\rightarrow 0$$
Evidently, for $i<4$, 
$$\dim \text{Ext}_B^{i,j}(C,k)=\begin{cases} 1 & (i,j)=(0,0)\\ 1 & (i,j)=(1,3)\\ 1 &(i,j)=(2,4)\\  3 & (i,j)=(3,6)\\  0 & \text{else}\\ \end{cases}$$
It follows  that the differentials
$$E_2^{1,1}\rightarrow E_2^{3,0}\qquad E_3^{0,2}\rightarrow E_3^{3,0}$$
are both zero in internal degree 6 and the differentials
$$E_2^{2,1}\rightarrow E_2^{4,0}\qquad E_3^{1,2}\rightarrow E_3^{4,0}\qquad E_4^{0,3}\rightarrow E_4^{4,0}$$
are all zero in internal degree 7. 

The differentials $E_N^{3-N,N-1}\rightarrow E_N^{3,0}$ are all zero for $N>3$, thus $E^{3,6}(C)\cong (E_2^{3,0})_6 = (E_{\infty}^{3,0})_6$. Since $B$ is a $\K_2$ algebra  with defining relations of degrees 2 and 3, $E^{3,6}(B)=0$. Thus $E^{3,6}(C)=0$.

The differentials $E_N^{4-N,N-1}\rightarrow E_N^{4,0}$ are all zero for $N>4$, thus $E^{4,7}(C)\cong (E_2^{4,0})_7 = (E_{\infty}^{4,0})_7$. Again, since $B$ is a $\K_2$ algebra  with defining relations of degrees 2 and 3,  $E^{4,7}(B)=0$. Thus $E^{4,7}(C)=0$.

\end{proof}

\begin{lemma}
The change-of-rings spectral sequence for $A\rightarrow C$ collapses at the $E_4$ page.
\end{lemma}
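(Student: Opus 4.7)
The plan is to establish $d_N = 0$ for all $N \geq 4$. First I would invoke Lemma \ref{bettiNos}, which gives $\Ext_A^q(C,k) = 0$ for $q > 3$, so the spectral sequence is supported in the strip $0 \leq q \leq 3$. For $N \geq 5$, the differential $d_N^{p,q} : E_N^{p,q} \to E_N^{p+N,\, q-N+1}$ carries an incompatible source/target constraint (source needs $q \in [0,3]$, target needs $q \geq N-1 \geq 4$), so $d_N = 0$ for $N \geq 5$ by support alone. The only remaining concern is the differential $d_4^{p,3} : E_4^{p,3} \to E_4^{p+4,\, 0}$, and I would argue that it vanishes for every $p$.

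For this I would exploit that $\Ext_A^3(C,k) = \Ext_A^{3,7}(C,k)$ is one-dimensional by Lemma \ref{bettiNos}, generated by a class $x_0$ in internal degree $7$. By Lemma \ref{triviality}, $E_2^{p,3} \cong \Ext_A^3(C,k) \otimes E^p(C) = x_0 \star E^p(C)$ as a right $E(C)$-module, and the spectral sequence differentials commute with the $\star$-action of $E(C)$ (Remark \ref{remark}). I would then show $d_4([x_0]) = 0$: no differentials exit bidegree $(4,0)$, so $E_4^{4,0}$ is a quotient of $E_2^{4,0} = E^4(C)$; the image $d_4([x_0])$ has internal degree $7$, hence lies in a quotient of $E^{4,7}(C)$, which vanishes by Lemma \ref{vanish}. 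By the right-module compatibility, $d_4(x_0 \star z) = d_4([x_0]) \star z = 0$ for every $z \in E^p(C)$, and since every class in $E_4^{p,3}$ has such a representative, $d_4^{p,3} = 0$.

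The main obstacle is making precise the identity $d_4(x_0 \star z) = d_4([x_0]) \star z$ when $[x_0]$ itself may fail to survive individually to the $E_4$ page (that is, when some $d_2([x_0])$ or $d_3([x_0])$ is nonzero while particular combinations $x_0 \star z$ still survive). To handle this rigorously I would work at the level of staircase representatives from Lemma \ref{staircase}: any class in $E_4^{p,3}$ corresponds to $(a_1,\ldots,a_4) \in \cS_4^{p,3}$, and the $a_i$ can be built compatibly with lifting $z$ through the projective resolution $Q^\bullet$ of ${}_Ck$, so the resulting $d_h a_4$ is the right-module product of a corresponding $d_h a_4^{x_0}$ with $z$. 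The vanishing $E^{4,7}(C) = 0$ from Lemma \ref{vanish} then forces $d_h a_4^{x_0}$ to represent zero in $E_4^{4,0}$, and hence $d_h a_4$ to represent zero in $E_4^{p+4,0}$, completing the proof.
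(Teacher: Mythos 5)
Your proof is correct and follows essentially the same route as the paper: Lemma \ref{bettiNos} confines the spectral sequence to the strip $0\le q\le 3$ so that only $d_4^{p,3}$ is at issue, the vanishing $E^{4,7}(C)=0$ from Lemma \ref{vanish} kills $d_4^{0,3}$ by internal-degree bookkeeping, and right $E(C)$-linearity propagates this to all $p$. If anything you are more careful than the paper about the one delicate point --- that $E_2^{p,3}=x_0\star E^p(C)$ while $x_0$ itself need not survive individually to $E_4^{0,3}$ --- which the paper dispatches with the single phrase that the differential ``respects the right $E(C)$ module structure.''
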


\begin{proof}
By Lemma \ref{vanish}, $E^{4,7}(C)=0$. Thus the differential $E_4^{0,3}\rightarrow E_4^{4,0}$ is zero by Lemma \ref{bettiNos}. Since the spectral sequence differential respects the right $E(C)$ module structure, the differential $E_4^{p,3}\rightarrow E_4^{p+4,0}$ is zero for all $p\ge 0$. The differential $E_4^{p,q}\rightarrow E_4^{p+4,q-3}$ is clearly zero if $q<3$ and for $q>3$ it is zero by Lemma \ref{bettiNos}.

\end{proof}

\begin{lemma}
The algebra  $C$ is a $\mathcal K_2$ algebra.
\end{lemma}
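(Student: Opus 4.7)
The plan is to argue by contradiction, closely following the strategy of Theorem \ref{factorThm} and exploiting the vanishing results of Lemma \ref{vanish} together with the collapse of the change-of-rings spectral sequence at the $E_4$ page.

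First I would suppose $C$ is not $\mathcal{K}_2$ and let $N$ be minimal such that some homogeneous class $[\zeta] \in E^{N,m}(C)$ fails to lie in $D_2(C)$. Since the diagonal subalgebra $\bigoplus_n E^{n,n}(C)$ is always generated by $E^{1,1}(C)$ and hence contained in $D_1(C) \subset D_2(C)$, we must have $m > N$, which forces $N \geq 3$. In the change-of-rings spectral sequence
$$E_2^{p,q} = \Ext_C^p(k, \Ext_A^q(C,k)) \Rightarrow \Ext_A^{p+q}(k,k),$$
the class $[\zeta]$ corresponds to a permanent cocycle $\alpha \in E_2^{N,0}$ in internal degree $m$. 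By Koszulity of $A$ and Lemma \ref{Einfinity}, $E_{\infty}^{N,0}$ vanishes in internal degree $m > N$, so $\alpha$ must lie in the image of some differential $d_r^{N-r,\,r-1}$.

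The collapse of the spectral sequence at the $E_4$ page (proved above) reduces us to $r \in \{2, 3\}$. By Proposition \ref{survivesToE3}, $\text{Im}(d_2^{N-2, 1})$ consists of elements representable as sums in $E^2(C) \star E^{N-2}(C) \subset D_2(C)$, so $\alpha$ cannot lie in this image. It therefore remains to show that $\text{Im}(d_3^{N-3, 2}) \subset D_2(C)$.

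This last step is the main obstacle, since $_{A}J$ is not a $\mathcal{K}_1$ $A$-module and hence Proposition \ref{survivesForever} does not apply directly. However, by the right $E(C)$-linearity of the spectral sequence differentials together with the triviality of the $C$-action on $\Ext_A(C,k)$ (Lemma \ref{triviality}), we have $\text{Im}(d_3^{N-3, 2}) = \text{Im}(d_3^{0, 2}) \star E^{N-3}(C)$, so it suffices to control $\text{Im}(d_3^{0, 2})$. By Lemma \ref{bettiNos}, $\Ext_A^2(C,k)$ decomposes as $\Ext_A^{2,4}(C,k) \oplus \Ext_A^{2,6}(C,k)$. The summand $\Ext_A^{2,4}(C,k)$ is $\mathcal{K}_1$-generated over $E(A)$ by $\Ext_A^{1,3}(C,k)$, so Remark \ref{survivesRmk} implies that the restriction of $d_3^{0,2}$ to this summand has image in $E_2^{1,0} \star E^2(C) = E^1(C) \star E^2(C) \subset D_2(C)$. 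The remaining one-dimensional summand $\Ext_A^{2,6}(C,k)$ sits in internal degree 6, and $d_3^{0,2}$ applied to it lands in a subquotient of $E^{3,6}(C)$, which vanishes by Lemma \ref{vanish}. Combining these observations gives $\text{Im}(d_3^{0,2}) \subset D_2(C)$, and therefore $\text{Im}(d_3^{N-3,2}) \subset D_2(C) \star E^{N-3}(C) \subset D_2(C)$, contradicting $\alpha \notin D_2(C)$.
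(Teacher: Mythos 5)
Your proof is correct and follows essentially the same route as the paper's: minimality of $N$, Proposition \ref{survivesToE3} to kill the $d_2$ case, the $E_4$-collapse to reduce to $d_3$, and the splitting of $\Ext_A^2(C,k)$ into its internal-degree $4$ and $6$ pieces handled respectively by Remark \ref{survivesRmk} and the vanishing $E^{3,6}(C)=0$ from Lemma \ref{vanish}. The only cosmetic difference is that you phrase the dichotomy as ``$\alpha$ lies in the image of some $d_r$'' rather than ``$\alpha$ survives or is bounded,'' which is equivalent given the collapse.
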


\begin{proof}
We continue to work with the change-of-rings spectral sequence for $A\rightarrow C$.
Let $N$ be the smallest cohomology degree such that $E^N(C)\neq D_2^N(C)$ and let $\alpha\in E_2^{N,0}$ be a nonzero class corresponding to a class in $E^N(C)-D_2^N(C)$. Proposition 4.12 implies $\alpha$ survives to the $E_3$ page. Thus $\alpha$ survives to the $E_4$ page or is bounded by $E_3^{N-3,2}$. 

By the corollary to Lemma \ref{vanish}, if $\alpha$ survives to $E_4$, then $\alpha$ survives forever. Since the target of the spectral sequence is the Yoneda algebra of the Koszul algebra $A$ and since the internal degree of $\alpha$ is greater than $N$, we must have $\alpha=0$, a contradiction.

On the other hand, suppose $\alpha$ is bounded by $E_3^{N-3,2}$. By Lemma \ref{bettiNos}, there exist vector spaces $V$ and $W$ (concentrated in degree 0) such that $E_3^{0,2}=V(-4)\oplus W(-6)$. By Lemma \ref{vanish}, $E^{3,6}(C)=0$, so $d_3^{N-3,2}(W(-6)\star E^{N-3}(C))=0$. Arguing as in the proof of Proposition \ref{survivesForever} and Remark \ref{survivesRmk}, $d_3^{N-3,2}(V(-4)\star E^{N-3}(C))\subset E_2^{1,0}\star E^{N-1}(C)$.  Thus if $\alpha$ is bounded on the $E_3$ page, $\alpha$ corresponds to a class in $D_2^N(C)$ by the minimality of $N$, a contradiction.

\end{proof}

If $L=\la afg, abg\ra\subset C$, then $C/L\isom k[\D'_7]$. One can show $C/L$ is a $\K_2$ algebra by arguing exactly as in the Lemmas above, though verifying $\Ext_C^3(C/L,k)=\Ext_C^{3,6}(C/L,k)$ is more difficult.    We do not know if the face rings in either family with more than 8 generators are $\K_2$.

 \bibliographystyle{amsplain}
\bibliography{bibliog}

\end{document}